\definecolor{darkred}{rgb}{0.8,0.1,0.1}
\theoremstyle{plain}
\newtheorem{theo}{Theorem}[section]
\newtheorem{lem}[theo]{Lemma}
\newtheorem{propo}[theo]{Proposition}
\newtheorem{cor}[theo]{Corollary}
\theoremstyle{definition}
\newtheorem{defi}[theo]{Definition}
\newtheorem{ex}[theo]{Example}
\newtheorem{rem}[theo]{Remark}
\numberwithin{equation}{section}
\def\nn{\nonumber}
\def\Hom{\mathrm{Hom}}
\def\Aut{\mathrm{Aut}}
\def\Mod{\mathrm{Mod}}
\def\Der{\mathrm{Der}}
\def\der{\mathrm{der}}
\def\Set{\mathsf{Set}}
\def\op{\mathrm{op}}
\def\id{\mathrm{id}}
\def\ev{\mathrm{ev}}
\def\MMM{\mathscr{M}}
\def\AAA{\mathscr{A}}
\def\SSS{\mathscr{S}}
\def\GGG{\mathscr{G}}
\def\OO{\mathcal{O}}
\def\bbZ{\mathbb{Z}}
\def\bbK{\mathbb{K}}
\def\bbC{\mathbb{C}}
\def\bbT{\mathbb{T}}
\def\1{\mathbbm{1}}
\def\sk{\vspace{2mm}}
\newcommand{\bol}[1]{{\boldsymbol{#1}}}
\title{%
Mapping spaces and automorphism groups\\
of toric noncommutative spaces
}
\author{%
Gwendolyn E. Barnes$^{1,a}$, Alexander Schenkel$^{2,3,b}$ \ and \ Richard J.\ Szabo$^{1,c}$\vspace{4mm}\\
{\small ${}^1$ Department of Mathematics, Heriot-Watt University, Edinburgh EH14 4AS, United Kingdom.}\vspace{0.5mm}\\
{\small \& Maxwell Institute for Mathematical Sciences, Edinburgh, United Kingdom.}\vspace{0.5mm}\\
{\small \& The Higgs Centre for Theoretical Physics, Edinburgh, United Kingdom.}\vspace{2mm}\\
{\small ${}^2$ Fakult\"at f\"ur Mathematik, Universit\"at Regensburg, 93040 Regensburg, Germany.}\vspace{2mm}\\
{\small ${}^3$ School of Mathematical Sciences, University of Nottingham,}\\
{\small University Park, Nottingham NG7 2RD, United Kingdom.}\vspace{4mm}\\
 {\small \texttt{email:} $^a$ \texttt{geb31@hw.ac.uk} ~,~$^b$  \texttt{alexander.schenkel@nottingham.ac.uk} ~,~$^c$ \texttt{R.J.Szabo@hw.ac.uk} }
 }
\date{March 2017}
\begin{document}

\maketitle

\begin{abstract}
\noindent
We develop a sheaf theory approach to toric noncommutative geometry which allows us to formalize the concept of mapping spaces between two toric noncommutative spaces. As an application we study the `internalized' automorphism group of a toric noncommutative space and show that its Lie algebra has an elementary description in terms of braided derivations.
\end{abstract}

\paragraph*{Report no.:} EMPG--16--14
\paragraph*{Keywords:} Noncommutative geometry, torus actions, sheaves, exponential objects, automorphism groups
\paragraph*{MSC 2010:}  16T05, 18F20, 53D55, 81R60


\setcounter{tocdepth}{1}
\tableofcontents

\section{\label{sec:intro}Introduction and summary}
Toric noncommutative spaces are among the most studied and best understood examples in
noncommutative geometry. Their function algebras $A$ carry a coaction of a torus Hopf algebra $H$,
whose cotriangular structure dictates the commutation relations in $A$. Famous examples
are given by the noncommutative tori~\cite{Rieffel}, the Connes-Landi spheres \cite{CL} and related
deformed spaces \cite{CDV}. More broadly, toric noncommutative spaces can be regarded as special 
examples of noncommutative spaces that are obtained by Drinfeld twist (or $2$-cocycle) deformations
of algebras carrying a Hopf algebra (co)action, see e.g.\ \cite{AS,BSS1,BSS2} and references therein.
For an algebraic geometry perspective on toric noncommutative varieties, see \cite{CLS1}.
\sk

Noncommutative differential geometry on toric noncommutative spaces, 
and more generally on noncommutative spaces obtained by Drinfeld twist 
deformations, is far developed and well understood. 
Vector bundles (i.e.\ bimodules over $A$) have been studied in \cite{AS}, where also a theory
of noncommutative connections on bimodules was developed. These results were 
later formalized within the powerful framework of closed braided monoidal categories
and thereby generalized to certain nonassociative spaces (obtained by cochain twist deformations)
in~\cite{BSS1,BSS2}. Examples of noncommutative principal bundles (i.e.\ Hopf-Galois extensions \cite{BM,BJM})
in this framework were studied in \cite{LvS}, and these constructions
were subsequently abstracted and generalized in
\cite{ABPS}. In applications to noncommutative gauge theory, 
moduli spaces of instantons on toric noncommutative spaces
were analyzed in~\cite{BL,CLS2,BLvS,CLS14}, while analogous moduli spaces of
self-dual strings in higher noncommutative gauge theory were considered by~\cite{MS}.
\sk

Despite all this recent progress in understanding the geometry of toric noncommutative spaces,
there is one very essential concept missing: Given two toric noncommutative spaces, say
$X$ and $Y$, we would like to have a `space of maps' $Y^X$ from $X$ to $Y$.
The problem with such mapping spaces is that they will in general be
`infinite-dimensional', just like the space of maps between two finite-dimensional manifolds
is generically an infinite-dimensional manifold. In this paper we propose a framework
where such `infinite-dimensional' toric noncommutative spaces may be formalized
and which in particular allows us to describe the space of maps between any two toric 
noncommutative spaces. Our approach makes use of sheaf theory: Denoting by ${}^H\SSS$
the category of toric noncommutative spaces, we show that there is a natural site structure
on ${}^H\SSS$ which generalizes the well-known Zariski site of algebraic geometry to the toric noncommutative setting.
The category of generalized toric noncommutative spaces is then given by the sheaf topos 
${}^H\GGG:= \mathrm{Sh}({}^H\SSS)$  and we show that there is
a fully faithful embedding ${}^H\SSS\to {}^H\GGG$ which allows 
us to equivalently regard toric noncommutative spaces as living in this bigger category.
The advantage of the bigger category ${}^H\GGG$ is that it enjoys very good categorical properties,
in particular it admits all exponential objects. We can thereby make sense  of the `space of maps'
$Y^X$ as a generalized toric noncommutative space in ${}^H\GGG$, i.e.\ as a sheaf on the site ${}^H\SSS$. As an application,
we study the `internalized' automorphism group $\Aut(X)$ of a toric noncommutative space $X$,
which is a certain subobject in ${}^H\GGG$ of the self-mapping space
$X^X$. Using synthetic geometry techniques,
we are able to compute the Lie algebra of $\Aut(X)$ and we show that it can be identified with
the braided derivations considered in \cite{AS,BSS2}. Hence our concept of automorphism groups
`integrates' braided derivations to finite (internalized) automorphisms, which is an open problem in
toric noncommutative geometry that cannot be solved by more elementary techniques. 
\sk

Besides giving rise to a very rich concept of `internalized' automorphism groups
of toric noncommutative spaces, there are many other applications and problems 
which can be addressed with our sheaf theory approach to toric noncommutative geometry.
For example, the mapping spaces $Y^X$ may be used to describe the spaces of field configurations
for noncommutative sigma-models, see e.g.\ \cite{DKL1,DKL2,MR,DLL}. Due to the fact that
the mapping space $Y^X$ captures many more maps than the {\em set} of morphisms $\Hom(X,Y)$ 
(compare with Example \ref{ex:mappingline} in the main text), this will lead to a much richer structure
of noncommutative sigma-models than those discussed previously. Another immediate application
is to noncommutative principal bundles:
It was observed in~\cite{BM} that the definition
of a good notion of gauge transformations for noncommutative Hopf-Galois extensions is somewhat problematic,
because there are in general not enough algebra automorphisms of the total space algebra. 
To the best of our knowledge, this problem has not yet been solved. 
Using our novel sheaf theory techniques, we can give a natural definition
of an `internalized' gauge group for toric noncommutative principal bundles
$P\to X$ by carving out a subobject in ${}^H\GGG$ 
of the `internalized' automorphism group $\Aut(P)$ of the total space
which consists of all maps that preserve the 
structure group action and the base space. 
\sk

The outline of the remainder of this paper is as follows:
In Section \ref{sec:prelim} we recall some preliminary results concerning
cotriangular torus Hopf algebras $H$ and their comodules,
which form symmetric monoidal categories ${}^H\MMM$.
In Section \ref{sec:algebras} we study algebra objects in ${}^H\MMM$
whose commutation relations are controlled by the cotriangular structure on $H$.
We establish a category of finitely-presented algebra objects ${}^H\AAA_{\mathrm{fp}}$,
which contains noncommutative tori, Connes-Landi spheres and related examples,
and study its categorical properties, including coproducts, pushouts and localizations.
The category of toric noncommutative spaces ${}^H\SSS$ is then given by the opposite 
category of ${}^H\AAA_{\mathrm{fp}}$ and we show in Section \ref{sec:spaces}
that ${}^H\SSS$ can be equipped with the structure of a site.
In Section \ref{sec:genspaces} we introduce and study 
the sheaf topos ${}^H\GGG$ whose objects are sheaves on ${}^H\SSS$ which
we interpret as generalized toric noncommutative spaces. We show that the Yoneda embedding factorizes
through ${}^H\GGG$ (i.e.\ that our site is subcanonical) and hence obtain a fully faithful embedding
${}^H\SSS\to {}^H\GGG$ of toric noncommutative spaces into generalized toric noncommutative spaces.
An explicit description of the exponential objects $Y^X$ in ${}^H\GGG$ is given,
which in particular allows us to formalize and study the mapping space 
between two toric noncommutative spaces. Using a simple example, 
it is shown in which sense the mapping spaces $Y^X$ are richer than 
the morphism sets $\Hom(X,Y)$ (cf.\ Example \ref{ex:mappingline}).
In Section \ref{sec:automorphism} we apply these techniques to define
an `internalized' automorphism group $\Aut(X)$ of a toric noncommutative space $X$,
which arises as a certain subobject in ${}^H\GGG$ of the self-mapping space $X^X$.
It is important to stress that $\Aut(X)$ is in general not representable, i.e.
it has no elementary description in terms of a Hopf algebra and hence it is a truly 
generalized toric noncommutative space described by a sheaf on ${}^H\SSS$.
The Lie algebra of $\Aut(X)$ is computed in Section \ref{sec:liealgebras}
by using techniques from synthetic (differential) geometry \cite{MoerdijkReyes,Lavendhomme,Kock}.
We then show in Section \ref{sec:comparison} that the Lie algebra of $\Aut(X)$
can be identified with the braided derivations of the function algebra of $X$. Hence,
in contrast to $\Aut(X)$, its Lie algebra of infinitesimal automorphisms has an elementary description. This identification
is rather technical and it relies on a fully faithful embedding 
${}^H\MMM_{\mathrm{dec}}\to \mathrm{Mod}_{\underline{K}}({}^H\GGG)$ 
of a certain full subcategory (called decomposables) of the category of left $H$-comodules ${}^H\MMM$
into the category of $\underline{K}$-module objects in the sheaf topos ${}^H\GGG$, where
$\underline{K}$ denotes the line object in this topos; the technical
details are presented in Appendix \ref{app:technical}.

\section{\label{sec:prelim}Hopf algebra preliminaries}
In this paper all vector spaces will be over a fixed field $\bbK$
and the tensor product of vector spaces will be denoted simply by $\otimes$.
\sk

The Hopf algebra $H:= \OO(\bbT^n)$ of functions on the algebraic $n$-torus $\bbT^n$
is defined as follows: As a vector space, $H$ is spanned by the basis
\begin{flalign}
\big\{t_{\bol{m}} \,  :\,  \bol{m} = (m_1,\dots,m_n)\in\bbZ^{n}\big\}~,
\end{flalign}
on which we define a (commutative and associative) product and unit by
\begin{flalign}
 t_{\bol{m}}\,t_{\bol{m^\prime}} = t_{\bol{m} + \bol{m^\prime}}\quad,\qquad 
 \1_H^{} = t_{\bol{0}}~.
\end{flalign}
The (cocommutative and coassociative) coproduct, counit and antipode in $H$ are given by
\begin{flalign}
\Delta(t_{\bol{m}}) = t_{\bol{m}}\otimes t_{\bol{m}}\quad,\qquad \epsilon(t_{\bol{m}}) =1
\quad,\qquad S(t_{\bol{m}}) = t_{-\bol{m}}~.
\end{flalign}
We choose a cotriangular structure on $H$, i.e.\ 
a linear map $R : H\otimes H\to \bbK$ satisfying
\begin{subequations}\label{eqn:Rmatrixproperties}
\begin{flalign}
R(f\,g \otimes h) &= R(f\otimes h_{(1)}) ~R(g\otimes h_{(2)}) ~,\\[4pt]
R(f\otimes g\,h) &= R(f_{(1)} \otimes h)~R(f_{(2)}\otimes g)~,\\[4pt]
\epsilon(h)\,\epsilon(g)&=R(h_{(1)}\otimes g_{(1)}) ~R(g_{(2)}\otimes h_{(2)}) ~,
\end{flalign}
\end{subequations}
for all $f,g,h\in H$, where we have used Sweedler notation $\Delta(h) = h_{(1)}\otimes h_{(2)}$ 
(with summation understood) for the coproduct in $H$.
The quasi-commutativity condition $g_{(1)}\,h_{(1)} \,R(h_{(2)}\otimes g_{(2)}) =
R(h_{(1)}\otimes g_{(1)})\, h_{(2)}\, g_{(2)}$, for all $g,h\in H$, 
is automatically fulfilled because $H$ is commutative and cocommutative. 
For example, if $\bbK=\bbC$ is the field of complex numbers, 
we may take the usual cotriangular structure defined by
\begin{flalign}\label{eqn:Rmatrixexplicit}
R(t_{\bol{m}}\otimes t_{\bol{m^\prime}}) = 
\exp\Big({\rm i}\,\sum_{j,k=1}^n m_j \,\Theta^{jk}\, m^\prime_{k}\Big)~,
\end{flalign}
where $\Theta$ is an antisymmetric real $n{\times} n$-matrix,
which plays the role of deformation parameters for the theory.
\sk

Let us denote by ${}^H\MMM$ the category of left $H$-comodules.
An object in ${}^H\MMM$ is a pair $(V,\rho^V)$, where
$V$ is a vector space and $\rho^{V} : V\to H\otimes V$
is a left $H$-coaction on $V$, i.e.\ a linear map satisfying
\begin{flalign}\label{eqn:coactionproperties}
(\id_H\otimes \rho^V)\circ \rho^V = (\Delta\otimes\id_V)\circ \rho^V\quad ,\qquad (\epsilon\otimes \id_V)\circ \rho^V =\id_V~~.
\end{flalign}
We follow the usual abuse of notation and denote objects $(V,\rho^V)$
in ${}^H\MMM$ simply by $V$ without displaying the coaction explicitly. 
We further use a Sweedler-like notation $\rho^V(v) = v_{(-1)}\otimes
v_{(0)}$ (with summation understood)
for the left $H$-coactions. Then \eqref{eqn:coactionproperties} reads as
\begin{flalign}\label{eqn:coactionpropertiesSweedler}
v_{(-1)} \otimes {v_{(0)}}_{(-1)}\otimes {v_{(0)}}_{(0)} = {v_{(-1)}}_{(1)} \otimes {v_{(-1)}}_{(2)}\otimes v_{(0)}
\quad,\qquad \epsilon(v_{(-1)}) \, v_{(0)} = v~.
\end{flalign}
A morphism $L : V\to W$ in ${}^H\MMM$
is a linear map preserving the left $H$-coactions, i.e.\
\begin{subequations}\label{eqn:equivariantmap}
\begin{flalign}
(\id_H\otimes L)\circ\rho^{V} = \rho^W\circ L~,
\end{flalign}
or in the Sweedler-like notation
\begin{flalign}
v_{(-1)}\otimes L(v_{(0)}) =L(v)_{(-1)} \otimes  L(v)_{(0)}~,
\end{flalign}
\end{subequations}
for all $v\in V$.
\sk

The category ${}^H\MMM$ is a monoidal category
with tensor product of two objects $V$ and $W$ 
given by the tensor product $V\otimes W$ of vector spaces
equipped with the left $H$-coaction
\begin{flalign}\label{eqn:tensorcoaction}
\rho^{V\otimes W} : V\otimes W \longrightarrow H\otimes V\otimes W~,~~v\otimes w \longmapsto 
v_{(-1)}\,w_{(-1)}\otimes v_{(0)}\otimes w_{(0)}~.
\end{flalign}
The monoidal unit in ${}^H\MMM$ is given by the one-dimensional vector space
$\bbK$ with trivial left $H$-coaction $\bbK\to H\otimes \bbK\,,~c\mapsto \1_H^{}\otimes c$.
The monoidal category ${}^H\MMM$ is symmetric with commutativity constraint
\begin{flalign}\label{eqn:tauflip}
\tau_{V,W}^{} : V\otimes W \longrightarrow W\otimes V~,~~
v\otimes w \longmapsto R(w_{(-1)}\otimes v_{(-1)})\, w_{(0)}\otimes v_{(0)}~,
\end{flalign}
for any two objects $V$ and $W$ in ${}^H\MMM$.

\section{\label{sec:algebras}Algebra objects}
We are interested in spaces whose algebras of functions
are described by certain algebra objects in the symmetric monoidal category ${}^H\MMM$.
An algebra object in ${}^H\MMM$ is an object $A$ in ${}^H\MMM$
together with two ${}^H\MMM$-morphisms
$\mu_{A}^{} : A\otimes A\to A$ (product) and $\eta_{A}^{}: \bbK \to A$ (unit)
such that the diagrams
\begin{flalign}
\xymatrix@C=3.5em{
\ar[d]_-{\id_{A}\otimes\mu_{A}^{}}A\otimes A\otimes A\ar[r]^-{\mu_{A}^{}\otimes \id_A} & A\otimes A\ar[d]^-{\mu_{A}^{}} & \bbK\otimes A\ar[d]_-{\eta_A^{} \otimes \id_A}\ar[dr]^-{\simeq} & &\ar[dl]_-{\simeq} A\otimes \bbK\ar[d]^-{\id_A\otimes\eta_A^{}}\\
A\otimes A\ar[r]_-{\mu_{A}}& A & A\otimes A \ar[r]_-{\mu_A^{}} &  A & \ar[l]^-{\mu_A^{}}A\otimes A
}
\end{flalign}
in ${}^H\MMM$ commute.
Because ${}^H\MMM$ is symmetric, we may additionally demand that the product $\mu_A^{}$
is compatible with the commutativity constraints in ${}^H\MMM$, i.e.\ the diagram
\begin{flalign}\label{eqn:comalg}
\xymatrix@C=1.5em{
\ar[rd]_-{\mu_A^{}}A\otimes A\ar[rr]^-{\tau_{A,A}^{}} && A\otimes A\ar[dl]^-{\mu_A^{}}\\
&A&
}
\end{flalign}
in ${}^H\MMM$ commutes. This amounts to demanding the commutation relations
\begin{flalign}\label{eqn:commutationrelations}
a\,a^\prime = R(a^\prime_{(-1)}\otimes a_{(-1)})~a^\prime_{(0)}\,a_{(0)}~,
\end{flalign}
for all $a,a^\prime\in A$, where we have abbreviated the product by
$\mu_A^{}(a\otimes a^\prime\, ) = a\,a^\prime$; in the following we
shall also use the compact notation $\1 _A^{}:= \eta_A^{}(1)\in A$ for
the unit element in $A$, or sometimes just $\1$.
Such algebras are {\em not} commutative in the ordinary sense
once we choose a non-trivial cotriangular structure as for example in \eqref{eqn:Rmatrixexplicit}, 
see also Example \ref{ex:algebras}.
\sk

Let us introduce the category of algebras of interest.
\begin{defi}
The category ${}^H\AAA$ has as objects all algebra objects in ${}^H\MMM$ which satisfy the commutativity constraint
\eqref{eqn:comalg}. The morphisms between two objects are all ${}^H\MMM$-morphisms
$\kappa : A\to B$ which preserve products and units, i.e.\ for which
$\mu_B^{}\circ \kappa\otimes\kappa = \kappa\circ \mu_A^{}$ and
$\kappa\circ \eta_A^{} = \eta_B^{}$.
\end{defi}

There is the forgetful functor
$\mathrm{Forget} : {}^H\AAA \to {}^H\MMM$ 
which assigns to any object in ${}^H\AAA$ its underlying left $H$-comodule,
i.e.\ $(A,\mu_A^{} ,\eta_A^{}) \mapsto A$. This functor has a left adjoint
$\mathrm{Free} : {}^H\MMM \to {}^H\AAA$ which describes the free ${}^H\AAA$-algebra
construction: Given any object $V$ in ${}^H\MMM$
we consider the vector space
\begin{flalign}
\mathcal{T}V := \bigoplus_{n\geq 0} \, V^{\otimes n}~,
\end{flalign}
with the convention $V^{\otimes 0} :=\bbK$. Then $\mathcal{T}V$ is a left $H$-comodule
when equipped with the coaction $\rho^{\mathcal{T}V} : \mathcal{T}V \to H\otimes \mathcal{T}V$
specified by
\begin{flalign}
\rho^{\mathcal{T}V}\big(v_1\otimes \cdots\otimes v_n\big) =
{v_{1}}_{(-1)} \cdots {v_{n}}_{(-1)} \otimes {v_{1}}_{(0)}\otimes\cdots\otimes {v_{n}}_{(0)}~.
\end{flalign}
Moreover, $\mathcal{T}V$ is an algebra object in ${}^H\MMM$ when equipped with
the product $\mu_{\mathcal{T}V}^{} : \mathcal{T}V\otimes\mathcal{T}V \to \mathcal{T}V$
specified by
\begin{flalign}
\mu_{\mathcal{T}V}^{}\big((v_1\otimes\cdots \otimes v_n)\otimes (v_{n+1}\otimes \cdots\otimes v_{n+m})\big)=
v_1\otimes \cdots\otimes v_{n+m}
\end{flalign}
and the unit $\eta_{\mathcal{T}V}^{} :\bbK \to \mathcal{T}V$ given by
\begin{flalign}
\eta_{\mathcal{T}V}^{}(c)= c \in V^{\otimes 0} \subseteq \mathcal{T}V~.
\end{flalign}
The algebra object $\mathcal{T}V$ does not satisfy the commutativity constraint \eqref{eqn:comalg},
hence it is not an object of the category ${}^H\AAA$. We may enforce the commutativity constraint
by taking the quotient of $\mathcal{T}V$ by the two-sided ideal $I\subseteq \mathcal{T}V$ generated by
\begin{flalign}
v \otimes v^\prime - R(v^\prime_{(-1)}\otimes v_{(-1)}) ~v^\prime_{(0)}\otimes v_{(0)}~,
\end{flalign}
for all $v,v^\prime\in V$. The ideal $I$ is stable under the left $H$-coaction, i.e.\ 
$\rho^{\mathcal{T}V} : I \to H\otimes I$. Hence the quotient
\begin{flalign}
\mathrm{Free}(V) := \mathcal{T}V /I~
\end{flalign}
is an object in ${}^H\AAA$ when equipped with the induced left $H$-coaction, product and unit.
Given now any ${}^H\MMM$-morphism $L :V\to W$, we define
an ${}^H\AAA$-morphism $\mathrm{Free}(L) : \mathrm{Free}(V)\to \mathrm{Free}(W)$
by setting
\begin{flalign}
\mathrm{Free}(L) \big(v_1\otimes\cdots \otimes v_n\big) = L(v_1)\otimes\cdots \otimes L(v_n)~.
\end{flalign}
This is compatible with the quotients because of \eqref{eqn:equivariantmap}. 
Finally, let us confirm that $\mathrm{Free}: {}^H\MMM\to {}^H\AAA$
is the left adjoint of the forgetful functor $\mathrm{Forget} : {}^H\AAA\to {}^H\MMM$,
i.e.\ that there exists a (natural) bijection
\begin{flalign}
\Hom_{{}^H\AAA}^{}\big(\mathrm{Free}(V), A \big) \simeq \Hom_{{}^H\MMM}^{}\big(V,\mathrm{Forget}(A)\big)
\end{flalign}
between the morphism sets, for any object $V$ in ${}^H\MMM$ and any object $A$ in ${}^H\AAA$.
This is easy to see from the fact that any ${}^H\AAA$-morphism $\kappa : \mathrm{Free}(V)\to A$
is uniquely specified by its restriction to the vector space $V = V^{\otimes 1}\subseteq \mathrm{Free}(V)$ of generators
and hence by an ${}^H\MMM$-morphism $V\to \mathrm{Forget}(A)$.
\sk

From a geometric perspective, the free ${}^H\AAA$-algebras 
$\mathrm{Free}(V)$ describe the function algebras on toric noncommutative planes.
In order to capture a larger class of toric noncommutative spaces,
we introduce a suitable concept of ideals for ${}^H\AAA$-algebras.
\begin{defi}
Let $A$ be an object in ${}^H\AAA$. An ${}^H\AAA$-ideal $I$ of $A$ 
is a two-sided ideal $I\subseteq A$ of the algebra underlying $A$
which is stable under the left $H$-coaction, i.e.\ the coaction $\rho^A$
induces a linear map $\rho^A : I \to H\otimes I$.
\end{defi}

This definition immediately implies 
\begin{lem}
If $A$ is an object in ${}^H\AAA$ and $I$ is an ${}^H\AAA$-ideal of $A$, 
the quotient $A/I$ is an object in ${}^H\AAA$ when equipped with the induced coaction, product and unit.
\end{lem}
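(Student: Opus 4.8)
The plan is to verify directly that each piece of structure on $A/I$ is well-defined and inherits the required properties from $A$, with the only genuinely nontrivial point being the compatibility of the quotient with the left $H$-coaction. First I would recall the standard algebra fact: since $I\subseteq A$ is a two-sided ideal of the underlying associative unital algebra, the quotient vector space $A/I$ carries a unique product $\mu_{A/I}^{}$ and unit $\eta_{A/I}^{}$ making the canonical projection $\pi : A\to A/I$ an algebra homomorphism; associativity and unitality are immediate by applying $\pi$ to the corresponding diagrams for $A$.

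Next I would address the comodule structure, which is the main obstacle. The hypothesis that $I$ is an ${}^H\AAA$-ideal means precisely that $\rho^A$ restricts to a map $I\to H\otimes I$, equivalently that $\rho^A(I)\subseteq H\otimes I$. Since $H\otimes(-)$ is an exact functor on vector spaces, the short exact sequence $0\to I\to A\to A/I\to 0$ gives $H\otimes I = \ker(\id_H\otimes\pi : H\otimes A\to H\otimes A/I)$, and therefore the composite $(\id_H\otimes\pi)\circ\rho^A : A\to H\otimes A/I$ kills $I$ and descends to a unique linear map $\rho^{A/I} : A/I\to H\otimes A/I$ satisfying $\rho^{A/I}\circ\pi = (\id_H\otimes\pi)\circ\rho^A$. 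The coaction axioms \eqref{eqn:coactionproperties} for $\rho^{A/I}$ then follow by a diagram chase: precompose each axiom with the (surjective) map $\pi$, use the intertwining identity $\rho^{A/I}\circ\pi = (\id_H\otimes\pi)\circ\rho^A$ and its iterates, and invoke the corresponding axiom for $\rho^A$ together with functoriality of $\otimes$; surjectivity of $\pi$ then lets us cancel it. Thus $(A/I,\rho^{A/I})$ is an object of ${}^H\MMM$ and $\pi$ is an ${}^H\MMM$-morphism.

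It remains to check that $\mu_{A/I}^{}$ and $\eta_{A/I}^{}$ are ${}^H\MMM$-morphisms and that the commutativity constraint \eqref{eqn:comalg} holds. For $\eta_{A/I}^{}=\pi\circ\eta_A^{}$ this is clear as a composite of ${}^H\MMM$-morphisms. For $\mu_{A/I}^{}$, one uses that $\pi\otimes\pi : A\otimes A\to A/I\otimes A/I$ is a surjective ${}^H\MMM$-morphism and that $\mu_{A/I}^{}\circ(\pi\otimes\pi) = \pi\circ\mu_A^{}$ is ${}^H\MMM$-equivariant; equivariance of $\mu_{A/I}^{}$ follows by cancelling the epimorphism $\pi\otimes\pi$. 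Finally, the commutativity constraint for $A/I$ is obtained by precomposing diagram \eqref{eqn:comalg} with $\pi\otimes\pi$, using naturality of $\tau$ (so that $\tau_{A/I,A/I}^{}\circ(\pi\otimes\pi)=(\pi\otimes\pi)\circ\tau_{A,A}^{}$) and the commutativity constraint for $A$, then cancelling $\pi\otimes\pi$. Equivalently, at the level of elements this is just the statement that relation \eqref{eqn:commutationrelations} is preserved under $\pi$. This establishes that $A/I$ is an object of ${}^H\AAA$, which completes the proof; I expect no real difficulty beyond the bookkeeping of cancelling the various epimorphisms, the one conceptual input being exactness of $H\otimes(-)$ used to descend the coaction.
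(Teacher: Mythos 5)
Your proof is correct and is exactly the routine verification the paper has in mind when it states that the lemma ``immediately'' follows from the definition of an ${}^H\AAA$-ideal (the paper omits the proof entirely); the one substantive point, that stability of $I$ under $\rho^A$ together with exactness of $H\otimes(-)$ lets the coaction descend to $A/I$, is handled properly, and the remaining checks by cancelling the epimorphisms $\pi$ and $\pi\otimes\pi$ are standard. Nothing is missing.
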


This lemma allows us to construct a variety of ${}^H\AAA$-algebras 
by taking quotients of free ${}^H\AAA$-algebras by suitable ${}^H\AAA$-ideals.
We are particularly interested in the case where the object $V$ in ${}^H\MMM$
that underlies the free ${}^H\AAA$-algebra $\mathrm{Free}(V)$ is
finite-dimensional; geometrically, this corresponds to a finite-dimensional toric noncommutative plane.
We  shall introduce a convenient notation for this case: First, notice that the one-dimensional
left $H$-comodules over the torus Hopf algebra $H = \OO(\bbT^n)$
can be characterized by a label $\bol{m}\in\bbZ^n$. The corresponding left $H$-coactions are given by
\begin{flalign}\label{eqn:1DHMMM}
\rho^{\bol{m}} : \bbK\longrightarrow H\otimes \bbK~,~~c\longmapsto t_{\bol{m}}\otimes c~.
\end{flalign}
We shall use the notation $\bbK_{\bol{m}} := (\bbK,\rho^{\bol{m}})$
for these objects in ${}^H\MMM$. The coproduct $\bbK_{\bol{m}}\sqcup \bbK_{\bol{m^\prime}}$ 
of two such objects is given by the vector space $\bbK\oplus \bbK\simeq \bbK^2$
together with the component-wise coaction, i.e.\
\begin{flalign}
\rho^{\bbK_{\bol{m}}\sqcup \bbK_{\bol{m^\prime}}} \big(c\oplus 0\big) = t_{\bol{m}}\otimes (c\oplus 0)\quad,\qquad
\rho^{\bbK_{\bol{m}}\sqcup \bbK_{\bol{m^\prime}}} \big(0\oplus c\big) = t_{\bol{m^\prime}}\otimes (0\oplus c)~.
\end{flalign}
The free ${}^H\AAA$-algebra corresponding to a finite coproduct of objects $\bbK_{\bol{m}_i}$, for $i=1,\dots,N$,
in ${}^H\MMM$ will be used frequently in this paper. Hence we introduce
the compact notation
\begin{flalign}
F_{\bol{m}_1,\dots, \bol{m}_N} := \mathrm{Free}\big(\bbK_{\bol{m}_1}\sqcup \cdots\sqcup \bbK_{\bol{m}_N}\big)~.
\end{flalign}
By construction, the ${}^H\AAA$-algebras $F_{\bol{m}_1,\dots, \bol{m}_N}$ are generated
by $N$ elements $x_i\in F_{\bol{m}_1,\dots,\bol{m}_N}$
whose transformation property under the left $H$-coaction is given by
$\rho^{F_{\bol{m}_1,\dots, \bol{m}_N}}(x_i) = t_{\bol{m}_i}\otimes x_i$
and whose commutation relations read as
\begin{flalign}
x_i \,x_j = R(t_{\bol{m}_j}\otimes t_{\bol{m}_i})~x_j\,x_i~,
\end{flalign}
for all $i,j=1,\dots,N$.
\sk

We can now introduce the category of finitely presented ${}^H\AAA$-algebras.
\begin{defi}
An object $A$ in ${}^H\AAA$ is finitely presented if it is isomorphic
to the quotient $F_{\bol{m}_1,\dots, \bol{m}_N}/I$ of a free ${}^H\AAA$-algebra 
$F_{\bol{m}_1,\dots, \bol{m}_N}$ by an ${}^H\AAA$-ideal $I= (f_k)$ 
that is generated by a finite number of elements $f_k\in F_{\bol{m}_1,\dots, \bol{m}_N}$,
for $k=1,\dots, M$, with $\rho^{F_{\bol{m}_1,\dots, \bol{m}_N}}(f_k) = t_{\bol{n}_k}\otimes f_k$, 
for some $\bol{n}_k\in\bbZ^n$.
We denote by ${}^H\AAA_{\mathrm{fp}}$ the full subcategory of ${}^H\AAA$ whose objects are
all finitely presented ${}^H\AAA$-algebras.
\end{defi}
\begin{ex}\label{ex:algebras}
Let us consider the case $\bbK=\bbC$ and $R$ given by \eqref{eqn:Rmatrixexplicit}.
Take the free ${}^H\AAA$-algebra generated by $x_i$ and $x_i^\ast$, for $i=1,\dots,N$,
with left $H$-coaction specified by $x_i\mapsto t_{\bol{m}_i}\otimes x_i$
and $x_i^\ast \mapsto t_{-\bol{m}_i}\otimes x_i^\ast$, for some
$\bol{m}_i\in\bbZ^n$; in the notation above, we consider the free ${}^H\AAA$-algebra 
$F_{\bol{m}_1,\dots,\bol{m}_N,-\bol{m}_1,\dots,-\bol{m}_N}$ and
denote the last $N$ generators by $x_i^\ast := x_{N+i}$, for $i=1,\dots,N$.
The algebra of (the algebraic version of) the $2N{-}1$-dimensional Connes-Landi sphere is obtained by 
taking the quotient with respect to the ${}^H\AAA$-ideal
\begin{flalign}
I_{\mathbb{S}_\Theta^{2N-1}} := \Big(\, \mbox{$\sum\limits_{i=1}^N$}
\,  x_i^\ast\,x_i -\1\, \Big)~,
\end{flalign}
which implements the unit sphere relation. 
The algebra of the $N$-dimensional noncommutative torus is obtained by
taking the quotient with respect to the ${}^H\AAA$-ideal
\begin{flalign}
I_{\mathbb{T}_\Theta^N} := \big(x_i^\ast\,x_i -\1 \,:\, i=1,\dots,N\big)~.
\end{flalign} 
To obtain also the even dimensional Connes-Landi spheres, we consider the free ${}^H\AAA$-algebra 
$F_{\bol{m}_1,\dots,\bol{m}_N,-\bol{m}_1,\dots,-\bol{m}_N,\bol{0}}$, where the additional generator
$x_{2N+1}$ has trivial $H$-coaction $x_{2N+1}\mapsto \1_H\otimes x_{2N+1}$,
and take the quotient with respect to the ${}^H\AAA$-ideal
\begin{flalign}
I_{\mathbb{S}_\Theta^{2N}} := \Big(\, \mbox{$\sum\limits_{i=1}^N$} \,
x_i^\ast\,x_i + (x_{2N+1})^2 -\1\, \Big)~.
\end{flalign}
All these examples are $\ast$-algebras with involution defined by $x_i \mapsto x_i^\ast$
and $x_{2N+1}^\ast = x_{2N+1}$. An example which is not a $\ast$-algebra
is the free ${}^H\AAA$-algebra $F_{\bol{m}}$, for some
$\bol{m}\neq\bol{0}$ in $\bbZ^n$,
which we may interpret as the algebra of (anti)holomorphic polynomials on $\bbC$.
\end{ex}

We will now study some properties of the categories 
${}^H\AAA$ and ${}^H\AAA_{\mathrm{fp}}$ that will be used in 
the following. First, let us notice that the category ${}^H\AAA$ has (finite) coproducts:
Given two objects $A$ and $B$ in ${}^H\AAA$ their coproduct $A\sqcup B$
is the object in ${}^H\AAA$ whose underlying left $H$-comodule is $A\otimes B$ 
(with coaction $\rho^{A\sqcup B} := \rho^{A\otimes B}$ given in \eqref{eqn:tensorcoaction}) 
and whose product $\mu_{A\sqcup B}$ and unit $\eta_{A\sqcup B}$
are characterized  by
\begin{subequations}\label{eqn:tensoralgebra}
\begin{flalign}
(a\otimes b) \,(a^\prime\otimes b^\prime\, ) &:=
R(a_{(-1)}^\prime\otimes b_{(-1)})~(a\,a_{(0)}^\prime)\otimes
(b_{(0)}\,b^\prime\, )~,\\[4pt]
\1_{A\sqcup B} &:= \1_A^{}\otimes \1_B^{}~.
\end{flalign}
\end{subequations}
The canonical inclusion ${}^H\AAA$-morphisms $\iota_1^{} : A\to A\sqcup B$ and $\iota_2^{} : B\to A\sqcup B$
are given by
\begin{flalign}
\iota_{1}^{}(a) = a\otimes \1_B^{}\quad,\qquad \iota_2^{}(b) = \1_A^{}\otimes b~, 
\label{eqn:inclusions}\end{flalign}
for all $a\in A$ and $b\in B$. The coproduct $A\sqcup B$ 
of two finitely presented ${}^H\AAA$-algebras $A$ and $B$ is finitely presented:
If $A = F_{\bol{m}_1,\dots, \bol{m}_N}/(f_k)$ and 
$B =  F_{\bol{m^\prime}_1,\dots, \bol{m^\prime}_{N^\prime}}/(f^\prime_{k^\prime})$,
then 
\begin{flalign}
A\sqcup B \simeq F_{\bol{m}_1,\dots, \bol{m}_N,\bol{m^\prime}_1,\dots,
  \bol{m^\prime}_{N^\prime}}\, \big/\,
\big(f_k\otimes\1, \, \1\otimes f^\prime_{k^\prime}\big)~,
\end{flalign}
where we have identified $F_{\bol{m}_1,\dots, \bol{m}_N,\bol{m^\prime}_1,\dots, \bol{m^\prime}_{N^\prime}}\simeq
F_{\bol{m}_1,\dots, \bol{m}_N} \sqcup F_{\bol{m^\prime}_1,\dots, \bol{m^\prime}_{N^\prime}}$.
Consequently, the category ${}^H\AAA_{\mathrm{fp}}$ has finite coproducts.
\sk

In addition to coproducts, we also need pushouts in ${}^H\AAA$ and ${}^H\AAA_{\mathrm{fp}}$, which are given 
by colimits of the form
\begin{flalign}\label{eqn:pushout}
\xymatrix{
\ar[d]_-{\kappa}C\ar[r]^-{\zeta} & B\ar@{-->}[d]\\
A \ar@{-->}[r]& A\sqcup_C^{} B
}
\end{flalign}
in ${}^H\AAA$ or ${}^H\AAA_{\mathrm{fp}}$. Such pushouts exist and can be constructed as follows:
Consider first the case where we work in the category ${}^H\AAA$. We define
\begin{flalign}\label{eqn:pushoutformula}
A\sqcup_C^{} B := A\sqcup B / I~,
\end{flalign}
where $I$ is the ${}^H\AAA$-ideal generated by $\kappa(c)\otimes \1_B^{} - \1_{A}^{}\otimes\zeta(c)$, for all
$c\in C$. The dashed ${}^H\AAA$-morphisms in \eqref{eqn:pushout} are given by
\begin{flalign}\label{eqn:pullbackarrows}
A\longrightarrow A\sqcup_C^{} B~,~~a\longmapsto [a\otimes \1_B^{}]\quad,\qquad
B\longrightarrow A\sqcup_C^{} B~,~~b\longmapsto [\1_{A}^{}\otimes b]~.
\end{flalign}
It is easy to confirm that $A\sqcup_C^{}B$ defined above is a pushout of the diagram \eqref{eqn:pushout}.
Moreover, the pushout of finitely presented ${}^H\AAA$-algebras is finitely presented: 
If $A = F_{\bol{m}_1,\dots, \bol{m}_N}/(f_k)$,
$B =  F_{\bol{m^\prime}_1,\dots, \bol{m^\prime}_{N^\prime}}/(f^\prime_{k^\prime})$
and $C =  F_{\bol{m^{\prime\prime}}_1,\dots, \bol{m^{\prime\prime}}_{N^{\prime\prime}}}/
(f^{\prime\prime}_{k^{\prime\prime}})$, then
\begin{flalign}\label{eqn:pushoutformulafg}
A\sqcup_C^{} B \simeq F_{\bol{m}_1,\dots,
  \bol{m}_N,\bol{m^\prime}_1,\dots, \bol{m^\prime}_{N^\prime}}\, \big/\,
\big(f_k\otimes \1,\, \1\otimes f^\prime_{k^\prime}, \,
\kappa(x^{\prime\prime}_{i}\, )\otimes\1 - \1\otimes
\zeta(x^{\prime\prime}_i\, ) \big)~,
\end{flalign}
where $x^{\prime\prime}_i$, for $i=1,\dots, N^{\prime\prime}$, are the generators of  $C$.
The isomorphism in \eqref{eqn:pushoutformulafg} follows from the fact
that the quotient by the {\em finite} number of extra relations 
$\kappa(x^{\prime\prime}_{i}\, )\otimes\1 - \1\otimes
\zeta(x^{\prime\prime}_i\, )$,
for all generators $x^{\prime\prime}_i$ of $C$, is sufficient to describe the ${}^H\AAA$-ideal $I$ in
\eqref{eqn:pushoutformula} for finitely presented ${}^H\AAA$-algebras
$C$: We can recursively use the identities (valid on the right-hand
side of \eqref{eqn:pushoutformulafg})
\begin{subequations}
\begin{flalign}
\big[\kappa(x^{\prime\prime}_i\,c)\otimes\1 - \1\otimes \zeta(x^{\prime\prime}_i\,c)\big]
&= \big[\big(\kappa(x^{\prime\prime}_i\, )\otimes\1 \big) \,
\big(\kappa(c)\otimes\1 - \1 \otimes \zeta(c)\big)\big]~,\\[4pt]
\big[\kappa(c\, x^{\prime\prime}_i\, )\otimes\1 - \1\otimes
\zeta(c\,x^{\prime\prime}_i\, )\big]&=
 \big[\big(\kappa(c)\otimes\1 - \1\otimes \zeta(c)\big)\,
\big(\kappa(x^{\prime\prime}_i\, )\otimes\1 \big) \big]~,
\end{flalign}
\end{subequations}
for all generators $x^{\prime\prime}_i$ and elements $c\in C$, in order to show that the ${}^H\AAA$-ideal $I$ is 
equivalently generated by $\kappa(x^{\prime\prime}_{i}\,
)\otimes\1 - \1 \otimes \zeta(x^{\prime\prime}_i\, )$.
Consequently, the category ${}^H\AAA_{\mathrm{fp}}$ has pushouts.
\sk

We also need the localization of ${}^H\AAA$-algebras $A$
with respect to a single {\em $H$-coinvariant} element $s \in A$, i.e.\ $\rho^A(s) = \1_H^{}\otimes s$.
Localization amounts to constructing an 
${}^H\AAA$-algebra $A[s^{-1}]$ together with an ${}^H\AAA$-morphism
$\ell_s : A\to A[s^{-1}]$ that maps the element $s\in A$ to an invertible element $\ell_s(s)\in A[s^{-1}]$
and that satisfies the following universal property: If $\kappa : A\to B$
is another ${}^H\AAA$-morphism such that $\kappa(s)\in B$ is invertible,
then $\kappa$ factors though $\ell_s : A\to A[s^{-1}]$, i.e.\ there
exists a unique ${}^H\AAA$-morphism $A[s^{-1}]\to B$ making the diagram
\begin{flalign}
\xymatrix{
\ar[rd]_-{\ell_s}A\ar[r]^-{\kappa} & B\\
& A[s^{-1}]\ar@{-->}[u]
}
\end{flalign}
commute.
We now show that the ${}^H\AAA$-algebra
\begin{subequations}\label{eqn:localizationformula}
\begin{flalign}
A[s^{-1}] := A\sqcup F_{\bol{0}}/(s\otimes x- \1_{A\sqcup F_{\bol{0}}}^{})
\end{flalign}
 together with the ${}^H\AAA$-morphism 
\begin{flalign}
\ell_s : A\longrightarrow A[s^{-1}]~,~~a\longmapsto [a\otimes\1_{F_{\bol{0}}}^{}]
\end{flalign}
\end{subequations}
is a localization of $A$ with respect to the $H$-coinvariant element $s\in A$.
The inverse of $\ell_s(s) = [s\otimes \1_{F_{\bol{0}}}^{}]$ exists and it is given by the new generator
$[\1_A^{}\otimes x]\in A[s^{-1}]$; then the inverse of 
$\ell_s(s^n)$ is $[\1_A^{}\otimes x^n]$, because $[s\otimes \1_{F_{\bol{0}}}^{}]$
and $[\1_A^{}\otimes x]$ commute in $A[s^{-1}]$, cf.\ \eqref{eqn:commutationrelations}, \eqref{eqn:tensoralgebra}
and use the fact that $x$ and $s$ are coinvariants.
Given now any ${}^H\AAA$-morphism $\kappa : A\to B$ such that $\kappa(s)\in B$ is invertible, say by
$t\in B$, then there is a unique ${}^H\AAA$-morphism $A[s^{-1}]\to B$ specified
by $[a\otimes x^{n}]\mapsto \kappa(a)\, t^n$ that factors $\kappa$ through $\ell_s :A\to A[s^{-1}]$.
Finally, the localization of finitely presented ${}^H\AAA$-algebras is finitely presented: 
If $A= F_{\bol{m}_1,\dots,\bol{m}_N}/(f_k)$, then
\begin{flalign}
A[s^{-1}]\simeq F_{\bol{m}_1,\dots,\bol{m}_N,\bol{0}}/(f_k\otimes
\1 ,\, s\otimes x - \1 )~.
\end{flalign}

\section{\label{sec:spaces}Toric noncommutative spaces}
From a geometric perspective, it is useful to interpret an object 
$A$ in ${}^H\AAA_{\mathrm{fp}}$ as the `algebra of functions' on a toric noncommutative space $X_A$. 
Similarly, a morphism $\kappa : A\to B$ in ${}^H\AAA_{\mathrm{fp}}$
is interpreted as the `pullback' of a map $f : X_B\to X_A$ between toric noncommutative spaces,
where due to contravariance of pullbacks the direction of the arrow is reversed when going from algebras to spaces.
We shall use the more intuitive notation $\kappa = f^\ast : A\to B$ for the ${}^H\AAA_{\mathrm{fp}}$-morphism
corresponding to $f : X_B\to X_A$. This can be made precise with
\begin{defi}
The category of toric noncommutative spaces
\begin{flalign}
{}^H\SSS := \big({}^H\AAA_{\mathrm{fp}}\big)^\op
\end{flalign}
is the opposite of the category ${}^H\AAA_{\mathrm{fp}}$.
Objects in ${}^H\SSS$ will be denoted by symbols like $X_A$, where
$A$ is an object in ${}^H\AAA_{\mathrm{fp}}$. Morphisms
in ${}^H\SSS$ will be denoted by symbols like $f : X_{B}\to X_{A}$ and they are (by definition)
in bijection with ${}^H\AAA_{\mathrm{fp}}$-morphisms
$f^\ast : A\to B$.
\end{defi}

As the category ${}^H\AAA_{\mathrm{fp}}$ has (finite) coproducts and pushouts,
which we have denoted by $A\sqcup B$ and $A\sqcup_C^{} B$,
its opposite category ${}^H\SSS$ has (finite) products and pullbacks.
Given two objects $X_A$ and $X_B$ in ${}^H\SSS$, their
product is given by
\begin{subequations}
\begin{flalign}
X_A\times X_B := X_{A\sqcup B}~,
\end{flalign}
together with the canonical projection ${}^H\SSS$-morphisms
\begin{flalign}
\pi_{1}^{} : X_A\times X_B \longrightarrow X_A \quad,\qquad \pi_{2}^{} : X_A\times X_B \longrightarrow X_B
\end{flalign}
\end{subequations}
specified by $\pi_{1}^\ast = \iota_{1}^{} : A\to A\sqcup B$
and $\pi_{2}^\ast = \iota_{2}^{} : B\to A\sqcup B$, where
$\iota_{1}^{}$ and $\iota_{2}^{}$ are the canonical inclusion ${}^H\AAA_{\mathrm{fp}}$-morphisms
for the coproduct in ${}^H\AAA_{\mathrm{fp}}$ (cf.\ \eqref{eqn:inclusions}).
Pullbacks 
\begin{flalign}\label{eqn:pullback}
\xymatrix{
\ar@{-->}[d]X_A\times_{X_C}^{} X_B \ar@{-->}[r] & X_B\ar[d]^-{g}\\
X_A \ar[r]_-{f} & X_C
}
\end{flalign}
in ${}^H\SSS$ are given by
\begin{flalign}
X_A\times_{X_C}^{} X_B  := X_{A\sqcup_C^{} B}~,
\end{flalign}
for $\kappa = f^\ast : C\to A $ and $\zeta = g^\ast :C\to B $ (cf.\ \eqref{eqn:pushout}). 
The dashed arrows in \eqref{eqn:pullback} are specified by their
corresponding  ${}^H\AAA_{\mathrm{fp}}$-morphisms in \eqref{eqn:pullbackarrows}.
\sk

We next introduce a suitable notion of covering for toric noncommutative spaces,
which is motivated by the well-known Zariski covering families 
in commutative algebraic geometry.
\begin{defi}\label{def:zariski}
An ${}^H\SSS$-Zariski covering family is a finite family of ${}^H\SSS$-morphisms 
\begin{flalign}
\big\{f_{i} : X_{A[s_i^{-1}]} \longrightarrow X_{A}^{}\big\}~,
\end{flalign}
where
\begin{itemize}
\item[(i)] $s_i\in A$ is an $H$-coinvariant element, i.e.\ $\rho^A(s_i) =\1_H^{}\otimes s_i$, for all $i$;
\item[(ii)] $f_i$ is specified by the canonical ${}^H\AAA_{\mathrm{fp}}$-morphism
$f_i^\ast = \ell_{s_i} : A\to A[s_i^{-1}]$, for all $i$;
\item[(iii)] there exists a family of elements $a_i\in A$ such that $\sum_i\, a_i\,s_i =\1_A^{}$.
\end{itemize}
\end{defi}
\begin{ex}
Recall from Example \ref{ex:algebras} that the algebra of functions on
the $2N$-dimensional
Connes-Landi sphere is given by 
\begin{flalign}
A_{\mathbb{S}^{2N}_\Theta} = 
F_{\bol{m}_1,\dots,\bol{m}_N,-\bol{m}_1,\dots,-\bol{m}_N,\bol{0}}\,
\big/\, I_{\mathbb{S}_{\Theta}^{2N}}~.
\end{flalign}
As the last generator $x_{2N+1}$ is $H$-coinvariant, we can define the two $H$-coinvariant
elements $s_1 := \frac{1}{2}\, (\1- x_{2N+1})$ and $s_{2} := \frac{1}{2}\, (\1+ x_{2N+1})$.
Then $s_1 + s_2 =\1$ and hence we obtain an ${}^H\SSS$-Zariski covering family
\begin{flalign}
\Big\{ f_i : X_{A_{\mathbb{S}^{2N}_\Theta}[s_i^{-1}]} \longrightarrow X_{A_{\mathbb{S}^{2N}_\Theta}}^{}\Big\}_{i=1,2}
\end{flalign}
for the $2N$-dimensional Connes-Landi sphere. Geometrically, $X_{A_{\mathbb{S}^{2N}_\Theta}[s_1^{-1}]}$
is the sphere with the north pole removed and similarly $X_{A_{\mathbb{S}^{2N}_\Theta}[s_2^{-1}]}$
the sphere with the south pole removed. 
\end{ex}

We now show that ${}^H\SSS$-Zariski covering families are stable under pullbacks.
\begin{propo}\label{prop:pullbackstablecovers}
The pullback of an ${}^H\SSS$-Zariski covering family $\{f_{i} : X_{A[s_i^{-1}]} \to X_{A}^{}\}$ along
an ${}^H\SSS$-morphism $g : X_{B} \to X_A$ is an ${}^H\SSS$-Zariski
covering family, i.e. the left vertical arrows of the pullback diagrams
\begin{flalign}\label{eqn:tmppullbackstable}
\xymatrix{
\ar@{-->}[d] X_{B}\times_{X_A}^{} X_{A[s_i^{-1}]} \ar@{-->}[r] & X_{A[s_i^{-1}]}\ar[d]^-{f_i}\\
X_B\ar[r]_-{g}& X_A
}
\end{flalign}
define an ${}^H\SSS$-Zariski covering family.
\end{propo}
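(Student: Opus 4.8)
The plan is to reduce the proposition to a base-change property of localizations. For each index $i$ set $t_i := g^\ast(s_i)\in B$, where $g^\ast : A\to B$ is the ${}^H\AAA_{\mathrm{fp}}$-morphism corresponding to $g$. First I would note that $t_i$ is $H$-coinvariant: since $g^\ast$ is in particular an ${}^H\MMM$-morphism, $\rho^B(t_i) = (\id_H\otimes g^\ast)\big(\rho^A(s_i)\big) = (\id_H\otimes g^\ast)(\1_H^{}\otimes s_i) = \1_H^{}\otimes t_i$, so the localization $B[t_i^{-1}]$ (cf.\ \eqref{eqn:localizationformula}) is defined. The whole statement will then follow from an isomorphism $B\sqcup_A^{} A[s_i^{-1}]\simeq B[t_i^{-1}]$ in ${}^H\AAA_{\mathrm{fp}}$ that is compatible with the canonical structure morphisms: this identifies the pullback $X_B\times_{X_A}^{}X_{A[s_i^{-1}]}$ with $X_{B[t_i^{-1}]}$ and the left vertical arrow of \eqref{eqn:tmppullbackstable} with the ${}^H\SSS$-morphism specified by $\ell_{t_i}$, yielding conditions (i) and (ii) of Definition \ref{def:zariski}. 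Condition (iii) is then immediate: applying the algebra morphism $g^\ast$ to $\sum_i\,a_i\,s_i=\1_A^{}$ gives $\sum_i\,g^\ast(a_i)\,t_i=\1_B^{}$, so the elements $b_i:=g^\ast(a_i)\in B$ furnish the required partition of unity.

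To produce the isomorphism I would exploit the explicit presentations already established in Section \ref{sec:algebras}. Write $s:=s_i$, $t:=t_i$, and recall $A[s^{-1}]=A\sqcup F_{\bol{0}}/(s\otimes x-\1)$ from \eqref{eqn:localizationformula}, where $\ell_s$ is the coproduct inclusion $A\to A\sqcup F_{\bol{0}}$ followed by the quotient map. Since pushouts paste, $B\sqcup_A^{}A[s^{-1}]$ can be computed in two stages: the pushout of $A\xrightarrow{g^\ast}B$ along the coproduct inclusion $A\to A\sqcup F_{\bol{0}}$ is $B\sqcup F_{\bol{0}}$, and then one pushes out along the remaining quotient map $A\sqcup F_{\bol{0}}\to A[s^{-1}]$. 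For a pushout along a quotient by an ${}^H\AAA$-ideal $J$ against an ${}^H\AAA$-morphism $\phi$, the universal property identifies the result with the quotient by the ${}^H\AAA$-ideal generated by $\phi(J)$; here $J=(s\otimes x-\1)$ is generated by the $H$-coinvariant element $s\otimes x-\1$, whose image under the induced morphism $A\sqcup F_{\bol{0}}\to B\sqcup F_{\bol{0}}$ is the $H$-coinvariant element $t\otimes x-\1$, so the generated ideal is again an ${}^H\AAA$-ideal. Hence
\begin{flalign}
B\sqcup_A^{}A[s^{-1}] \;\simeq\; \big(B\sqcup F_{\bol{0}}\big)\big/\big(t\otimes x-\1\big) \;=\; B[t^{-1}]~,
\end{flalign}
again by \eqref{eqn:localizationformula}, and tracing the canonical cocone morphisms through these identifications shows that the leg $B\to B\sqcup_A^{}A[s^{-1}]$ becomes exactly $\ell_t$, which gives (ii).

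An alternative, and perhaps cleaner, route is to verify the pushout property of $B[t^{-1}]$ directly: the universal property of localization supplies a unique ${}^H\AAA$-morphism $A[s^{-1}]\to B[t^{-1}]$ (as $g^\ast(s)=t$ is invertible there), making $B[t^{-1}]$ into a cocone under the span $B\leftarrow A\rightarrow A[s^{-1}]$; and for any competing cocone $(\phi:B\to D,\ \psi:A[s^{-1}]\to D)$ with $\phi\circ g^\ast=\psi\circ\ell_s$, the element $\phi(t)=\psi(\ell_s(s))$ is invertible in $D$, so $\phi$ factors uniquely through $\ell_t$, and compatibility of the induced map $B[t^{-1}]\to D$ with $\psi$ is forced by the uniqueness clause of the localization universal property applied to $A[s^{-1}]$.

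I do not anticipate a serious obstacle. The only genuinely non-formal ingredient is the staged computation of the pushout together with the bookkeeping that the resulting isomorphism intertwines all the canonical inclusion and quotient morphisms — so that the pulled-back arrow is really the one specified by $\ell_{t_i}$ and not merely a map into an abstractly isomorphic algebra. Everything else, namely the coinvariance of $t_i$ and the transport of the partition of unity, is an immediate consequence of $g^\ast$ being a morphism in ${}^H\AAA_{\mathrm{fp}}$.
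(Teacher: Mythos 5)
Your proposal is correct and follows essentially the same route as the paper: both reduce the statement to the isomorphism $B\sqcup_A^{}A[s_i^{-1}]\simeq B[g^\ast(s_i)^{-1}]$ compatible with the canonical morphisms (so that the pulled-back arrow is specified by $\ell_{g^\ast(s_i)}$), and both obtain the partition of unity by applying $g^\ast$ to $\sum_i a_i\,s_i=\1_A^{}$. The only difference is that the paper dismisses the isomorphism as an ``elementary computation'' with the explicit formulas \eqref{eqn:pushoutformula} and \eqref{eqn:localizationformula}, whereas you supply the details (pasting of pushouts, respectively a direct universal-property check) and also make explicit the coinvariance of $g^\ast(s_i)$ needed for Definition \ref{def:zariski}~(i).
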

\begin{proof}
By definition, $X_{B}\times_{X_A}^{} X_{A[s_i^{-1}]} = X_{B\sqcup_A^{} A[s_i^{-1}]}$. 
By universality of the pushout and localization, the pushout diagram for 
$B\sqcup_A^{} A[s_i^{-1}]$ extends to the commutative diagram
\begin{flalign}
\xymatrix{
\ar[d]_-{g^\ast}A\ar[r]^-{\ell_{s_i}} & A[s_i^{-1}]\ar[d]\ar@/^1pc/[rdd]^-{} &\\
 \ar@/_1pc/[rrd]_-{\ell_{g^\ast(s_i)}}B \ar[r]& B\sqcup_A^{} A[s_i^{-1}] \ar@{-->}[dr]& \\
&& B[g^\ast(s_i)^{-1}]
}
\end{flalign}
It is an elementary computation to confirm that the dashed arrow in this diagram is an isomorphism
by using the explicit formulas for the pushout \eqref{eqn:pushoutformula} and localization \eqref{eqn:localizationformula}.
As a consequence, $X_{B}\times_{X_A}^{} X_{A[s_i^{-1}]} \simeq X_{B[g^\ast(s_i)^{-1}]}$
and the left vertical arrow in \eqref{eqn:tmppullbackstable} is of the form
as required in Definition \ref{def:zariski} (i) and (ii).
To show also item (iii) of Definition \ref{def:zariski}, if $a_i\in A$
is a family of elements
such that $\sum_i \, a_i\,s_i=\1_A^{}$, then $b_i := g^\ast(a_i)\in B$
is a family of elements
such that $\sum_{i}\, b_i \, g^{\ast}(s_i) = \1_B^{}$.
\end{proof}
\begin{cor}\label{cor:intersection}
Let $\{f_{i} : X_{A[s_i^{-1}]} \to X_{A}^{}\}$ be an ${}^H\SSS$-Zariski covering family.
Then the pullback 
\begin{flalign}\label{eqn:intersection}
\xymatrix{
\ar@{-->}[d]X_{A[s_i^{-1}]}\times_{X_A}^{} X_{A[s_j^{-1}]} \ar@{-->}[r] & X_{A[s_j^{-1}]}\ar[d]^-{f_{j}}\\
X_{A[s_i^{-1}]} \ar[r]_-{f_i} & X_{A}
}
\end{flalign}
is isomorphic to $X_{A[s_i^{-1},\, s_j^{-1}]}$, where 
\begin{flalign}
A[s_i^{-1},\, s_j^{-1}] := \big(A[s_i^{-1}]\big)[\ell_{s_i}(s_j)^{-1}] \simeq \big(A[s_j^{-1}]\big)[ \ell_{s_j}(s_i)^{-1}]
\end{flalign}
is the localization with respect to the two $H$-coinvariant elements $s_i,s_j\in A$.
The dashed arrows in \eqref{eqn:intersection} are specified by the canonical ${}^H\AAA_{\mathrm{fp}}$-morphisms
$\ell_{s_j} :   A[s_i^{-1}] \to A[s_i^{-1},\,s_j^{-1}]$ and
$\ell_{s_i} :   A[s_j^{-1}] \to A[s_i^{-1},\,s_j^{-1}]$.
\end{cor}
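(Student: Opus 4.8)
The plan is to obtain the statement almost for free from Proposition~\ref{prop:pullbackstablecovers}, applied twice with the roles of the indices $i$ and $j$ interchanged, together with the uniqueness (up to unique isomorphism) of pullbacks in ${}^H\SSS$. As a preliminary remark I would note that $\ell_{s_i}(s_j)\in A[s_i^{-1}]$ and $\ell_{s_j}(s_i)\in A[s_j^{-1}]$ are again $H$-coinvariant: the localization morphisms $\ell_{s_i}$ and $\ell_{s_j}$ are in particular ${}^H\MMM$-morphisms and hence preserve the left $H$-coactions, so $\rho^{A[s_i^{-1}]}\big(\ell_{s_i}(s_j)\big)=\1_H^{}\otimes\ell_{s_i}(s_j)$, and likewise for $\ell_{s_j}(s_i)$. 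Therefore the iterated localizations $\big(A[s_i^{-1}]\big)[\ell_{s_i}(s_j)^{-1}]$ and $\big(A[s_j^{-1}]\big)[\ell_{s_j}(s_i)^{-1}]$ are well-defined objects of ${}^H\AAA_{\mathrm{fp}}$ via the construction \eqref{eqn:localizationformula}.

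The main step is to apply Proposition~\ref{prop:pullbackstablecovers} to the ${}^H\SSS$-Zariski covering family $\{f_i:X_{A[s_i^{-1}]}\to X_A\}$ pulled back along the single ${}^H\SSS$-morphism $g=f_i:X_{A[s_i^{-1}]}\to X_A$, whose dual ${}^H\AAA_{\mathrm{fp}}$-morphism is $g^\ast=\ell_{s_i}:A\to A[s_i^{-1}]$. Its proof constructs, for the member $f_j$, an isomorphism $X_{A[s_i^{-1}]}\times_{X_A}^{}X_{A[s_j^{-1}]}\simeq X_{A[s_i^{-1}][\ell_{s_i}(s_j)^{-1}]}$ under which the left dashed arrow of \eqref{eqn:intersection} (the projection onto $X_{A[s_i^{-1}]}$) becomes dual to the canonical localization morphism $\ell_{\ell_{s_i}(s_j)}:A[s_i^{-1}]\to A[s_i^{-1}][\ell_{s_i}(s_j)^{-1}]$, which is precisely the morphism displayed in the statement (there written loosely as $\ell_{s_j}$). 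Running the same argument with $i$ and $j$ exchanged would yield $X_{A[s_j^{-1}]}\times_{X_A}^{}X_{A[s_i^{-1}]}\simeq X_{A[s_j^{-1}][\ell_{s_j}(s_i)^{-1}]}$, with the projection onto $X_{A[s_j^{-1}]}$ dual to $\ell_{\ell_{s_j}(s_i)}$.

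To conclude I would invoke that a pullback square in ${}^H\SSS$ is unique up to a unique isomorphism, and that $X_{A[s_i^{-1}]}\times_{X_A}^{}X_{A[s_j^{-1}]}$ and $X_{A[s_j^{-1}]}\times_{X_A}^{}X_{A[s_i^{-1}]}$ are canonically isomorphic (interchange of the two legs). Combining this with the two presentations from the previous paragraph gives the isomorphism $A[s_i^{-1}][\ell_{s_i}(s_j)^{-1}]\simeq A[s_j^{-1}][\ell_{s_j}(s_i)^{-1}]$, which legitimizes the symmetric notation $A[s_i^{-1},s_j^{-1}]$, and transporting the projections along this isomorphism identifies the two dashed arrows of \eqref{eqn:intersection} with the canonical localization morphisms as claimed. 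As an independent check one can instead unwind \eqref{eqn:localizationformula} directly: both iterated localizations are isomorphic to the ${}^H\AAA_{\mathrm{fp}}$-algebra obtained from $A$ by adjoining two $H$-coinvariant generators $x$ and $y$ subject to the relations $s_i\,x=\1$ and $s_j\,y=\1$, which is manifestly symmetric under $(i,x)\leftrightarrow(j,y)$. The only point requiring a little care — and the main, rather mild, obstacle — is the diagram chase within the proof of Proposition~\ref{prop:pullbackstablecovers} needed to verify that the pushout projections really are the canonical localization morphisms $\ell_{\ell_{s_i}(s_j)}$ and $\ell_{\ell_{s_j}(s_i)}$ once the two presentations of the pullback have been matched up; everything else is purely formal.
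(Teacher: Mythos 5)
Your proposal is correct and follows essentially the same route as the paper, whose proof of this corollary is simply the observation that it follows from the proof of Proposition~\ref{prop:pullbackstablecovers} specialized to $g=f_i$ (resp.\ $g=f_j$); your additional remarks on the coinvariance of $\ell_{s_i}(s_j)$, the symmetry of the pullback, and the direct check via \eqref{eqn:localizationformula} just make explicit what the paper leaves implicit.
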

\begin{proof}
This follows immediately from the proof of Proposition \ref{prop:pullbackstablecovers}.
\end{proof}
\begin{rem}
For later convenience, we shall introduce the notation
\begin{flalign}\label{eqn:pullbacknotation}
\xymatrix{
\ar[d]_-{f_{j;i}} X_{A[s_i^{-1},s_j^{-1}]} \ar[r]^-{f_{i;j}} & X_{A[s_j^{-1}]}\ar[d]^-{f_{j}}\\
X_{A[s_i^{-1}]} \ar[r]_-{f_i} & X_{A}
}
\end{flalign}
for the morphisms of this pullback diagram.
\end{rem}

\section{\label{sec:genspaces}Generalized toric noncommutative spaces}
The category ${}^H\SSS$ of toric noncommutative spaces
has the problem that it does not generally admit exponential objects ${X_B}^{X_A}$, i.e.\
objects which describe the `mapping space' from $X_A$ to $X_B$. 
A similar problem is well-known from differential geometry, where the mapping 
space between two finite-dimensional manifolds in general is not 
a finite-dimensional manifold. There is however a canonical procedure for 
extending the category ${}^H\SSS$ to a bigger category that admits exponential objects.
We review this procedure in our case of interest.
\sk

The desired extension of ${}^H\SSS$ is given by the
category
\begin{flalign}
{}^H\GGG := \mathrm{Sh}\big({}^H\SSS\big)
\end{flalign}
of sheaves on ${}^H\SSS$ with covering families given in Definition \ref{def:zariski}.
Recall that a sheaf on ${}^H\SSS$ is a functor $Y : {}^H\SSS^\op \to \Set$ to the category of sets
(called a presheaf) that satisfies the sheaf condition: For any ${}^H\SSS$-Zariski covering family $\{f_i : X_{A[s_i^{-1}]} \to X_A^{}\}$
the canonical diagram
\begin{flalign}\label{eqn:sheafcondition}
\xymatrix{
Y\big(X_A\big) ~\ar[r] & ~\prod\limits_{i}\,  Y\big(X_{A[s_i^{-1}]}\big)  ~\ar@<-0.5ex>[r]\ar@<0.5ex>[r]& ~\prod\limits_{i,j}\,  Y\big(X_{A[s_i^{-1},\,s_j^{-1}]}\big)
}
\end{flalign}
is an equalizer in $\Set$. We have used Corollary \ref{cor:intersection} 
to express the pullback of two covering morphisms by $X_{A[s_i^{-1},\,s_j^{-1}]}$.
Because $ {}^H\SSS = ({}^H\AAA_{\mathrm{fp}})^\op$ was defined as the opposite 
category of ${}^H\AAA_{\mathrm{fp}}$, it is sometimes convenient to regard
a sheaf on ${}^H\SSS$ as a covariant functor $Y : {}^H\AAA_{\mathrm{fp}} \to \Set$.
In this notation, the sheaf condition \eqref{eqn:sheafcondition} looks like
\begin{flalign}\label{eqn:sheafconditionAlg}
\xymatrix{
Y(A) ~\ar[r] & ~\prod\limits_{i}\,  Y\big(A[s_i^{-1}]\big)  ~\ar@<-0.5ex>[r]\ar@<0.5ex>[r]& ~\prod\limits_{i,j}\,  Y\big(A[s_i^{-1},\,s_j^{-1}]\big)
}~.
\end{flalign}
We will interchangeably use these equivalent points of view.
The morphisms in ${}^H\GGG$ are natural transformations between functors,
i.e.\ presheaf morphisms.
\sk

We shall interpret ${}^H\GGG$ as a category of generalized toric noncommutative spaces.
To justify this interpretation, we will show that there is a fully faithful
embedding ${}^H\SSS \to {}^H\GGG$ of the category of toric noncommutative spaces
into the new category. As a first step, we use the (fully faithful) Yoneda embedding
${}^H\SSS \to \mathrm{PSh}({}^H\SSS)$ in order to embed ${}^H\SSS$
into the category of presheaves on ${}^H\SSS$. The Yoneda embedding is
given by the functor which assigns to any object $X_A$ in ${}^H\SSS$ 
the presheaf given by the functor
\begin{subequations}\label{eqn:Yoneda}
\begin{flalign}
\underline{X_A} := \Hom_{{}^H\SSS}^{}(-,X_A) : {}^H\SSS^\op \longrightarrow\Set
\end{flalign}
and to any ${}^H\SSS$-morphism $f : X_A\to X_B$ the natural transformation
$\underline{f}: \underline{X_A}\to \underline{X_B}$ with components
\begin{flalign}
\underline{f}_{X_C}^{} :  \Hom_{{}^H\SSS}^{}(X_C,X_A) \longrightarrow  \Hom_{{}^H\SSS}^{}(X_C,X_B)~,~~ 
g\longmapsto f\circ g~,
\end{flalign}
\end{subequations}
for all objects $X_C$ in ${}^H\SSS$.
\begin{propo}\label{propo:fullyfaithfulYoneda}
For any object $X_{A}$ in ${}^H\SSS$ the presheaf $\underline{X_A}$ is a sheaf on ${}^H\SSS$.
As a consequence, the Yoneda embedding induces a fully faithful embedding
${}^H\SSS\to {}^H\GGG$ into the category of sheaves on ${}^H\SSS$.
\end{propo}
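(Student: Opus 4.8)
The plan is to verify the sheaf condition \eqref{eqn:sheafconditionAlg} directly for the representable presheaf $\underline{X_A} = \Hom_{{}^H\SSS}(-,X_A)$, which in the algebraic picture is the covariant functor sending $B \mapsto \Hom_{{}^H\AAA_{\mathrm{fp}}}(A,B)$. Given an ${}^H\SSS$-Zariski covering family $\{f_i : X_{B[s_i^{-1}]} \to X_B\}$, I must show that the diagram
\begin{flalign*}
\xymatrix{
\Hom_{{}^H\AAA_{\mathrm{fp}}}(A,B) ~\ar[r] & ~\prod\limits_{i}\,  \Hom_{{}^H\AAA_{\mathrm{fp}}}\big(A,B[s_i^{-1}]\big)  ~\ar@<-0.5ex>[r]\ar@<0.5ex>[r]& ~\prod\limits_{i,j}\,  \Hom_{{}^H\AAA_{\mathrm{fp}}}\big(A,B[s_i^{-1},s_j^{-1}]\big)
}
\end{flalign*}
is an equalizer of sets. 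Since $A$ is finitely presented, write $A = F_{\bol{m}_1,\dots,\bol{m}_N}/(f_k)$; then an ${}^H\AAA_{\mathrm{fp}}$-morphism out of $A$ is the same as a choice of images $a_1,\dots,a_N$ in the target of the generators, compatible with the $H$-coaction (each $a_r$ carries the coaction label $\bol{m}_r$) and annihilating the relations $f_k$. So the sheaf condition for $\underline{X_A}$ reduces to the following two claims about the algebras $B, B[s_i^{-1}], B[s_i^{-1},s_j^{-1}]$ themselves, viewed as objects of ${}^H\MMM$: (separation) if two elements of $B$ have the same image in every $B[s_i^{-1}]$ then they are equal; (gluing) a compatible family of elements in the $B[s_i^{-1}]$ with matching images in the $B[s_i^{-1},s_j^{-1}]$ comes from an element of $B$. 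Compatibility with coaction and relations is then automatic because the localization maps $\ell_{s_i}$ are ${}^H\AAA_{\mathrm{fp}}$-morphisms, hence intertwine coactions and algebra structure, so the glued element automatically lies in the right comodule component and kills the $f_k$.

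Thus the real content is the classical statement that a finite family of coinvariant elements $s_i$ with $\sum_i a_i s_i = \1_B$ gives a ``Zariski cover'' of $B$ in the usual commutative-algebra sense — i.e. the sequence
\begin{flalign*}
0 \longrightarrow B \longrightarrow \bigoplus_i B[s_i^{-1}] \longrightarrow \bigoplus_{i,j} B[s_i^{-1},s_j^{-1}]
\end{flalign*}
is exact. First I would prove separation: if $b \in B$ maps to $0$ in each $B[s_i^{-1}]$, then for each $i$ there is $N_i$ with $s_i^{N_i} b = 0$ in $B$ (using the explicit localization \eqref{eqn:localizationformula} and that $s_i$ is coinvariant, so the usual ``kernel of localization'' description holds); taking $N = \max_i N_i$ and using that the $s_i^N$ still generate the unit ideal — because $\sum a_i s_i = \1$ implies, by raising to a high power, that $\sum_i b_i s_i^N = \1$ for suitable $b_i \in B$ — gives $b = \sum_i b_i s_i^N b = 0$. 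For gluing I would take a compatible family $b_i/s_i^{k} \in B[s_i^{-1}]$ (with a common denominator exponent $k$ after clearing), use the agreement in $B[s_i^{-1},s_j^{-1}]$ to get $s_i^\ell s_j^\ell(s_j^k b_i - s_i^k b_j) = 0$ in $B$ for large $\ell$, absorb $s_i^\ell, s_j^\ell$ into new numerators, and then set $b = \sum_i c_i\, (s_i^{k+\ell} b_i')$ where $\sum_i c_i s_i^{k+\ell} = \1_B$; one checks $\ell_{s_i}(b) = b_i/s_i^k$ in each $B[s_i^{-1}]$.

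The main obstacle — and the only place care is needed — is bookkeeping the exponents uniformly across a \emph{finite} index set and confirming that each algebraic manipulation stays inside ${}^H\MMM$: since the $s_i$ are $H$-coinvariant, all the elements $a_i, b_i, c_i$ that arise can be chosen homogeneous of the appropriate coaction weight (or one simply notes that the coinvariant subalgebra argument and the weight-graded pieces do not interact, as the $s_i$-powers are coinvariant), so the comodule structure never obstructs the construction. Once exactness of the displayed sequence is in hand, applying $\Hom_{{}^H\AAA_{\mathrm{fp}}}(A,-)$ componentwise (generators-and-relations description of $A$) yields the equalizer, establishing that $\underline{X_A}$ is a sheaf; fullness and faithfulness of the induced embedding ${}^H\SSS \to {}^H\GGG$ are then immediate from the Yoneda lemma together with the fact that ${}^H\GGG$ is a full subcategory of $\mathrm{PSh}({}^H\SSS)$.
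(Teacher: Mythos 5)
Your proposal is correct and its overall skeleton matches the paper's: reduce the sheaf condition for $\underline{X_A}$ to the statement that $B\to\prod_i B[s_i^{-1}]\rightrightarrows\prod_{i,j}B[s_i^{-1},s_j^{-1}]$ is an equalizer (the paper does this by noting that $\Hom_{{}^H\AAA_{\mathrm{fp}}}(A,-)$ preserves limits; you do it via the generators-and-relations description of $A$, which amounts to the same thing, modulo the small extra check that the glued images of generators have the right coaction weight and kill the $f_k$ -- which, as you note, follows from separation plus decomposability of $B$). Where you genuinely diverge is in \emph{how} the equalizer property is verified. The paper writes a generic element of $B[s_i^{-1}]$ as $[b_i\otimes c_i]$ in the pushout presentation $B\sqcup F_{\bol{0}}/(s_i\otimes x_i-\1)$ and asserts, by inspecting the relations in the double localization, that the matching conditions force $[b_i\otimes c_i]=[b\otimes\1]$ for a common $b\in B$; notably, its written argument never invokes condition (iii) of Definition \ref{def:zariski}. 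You instead run the classical affine-cover argument in full: the kernel of $\ell_{s_i}$ is the $s_i$-torsion (legitimate here because coinvariant elements are central by \eqref{eqn:commutationrelations}, so this is an ordinary central localization), uniform exponents over the finite index set, and the partition of unity $\sum_i a_i s_i=\1$ raised to a high power to produce both injectivity and the glued element. This buys a proof in which the covering condition (iii) does visible work and the crucial gluing step is actually carried out rather than asserted; the cost is the exponent bookkeeping, which you correctly flag and which is unproblematic since all denominators are coinvariant and hence do not interact with the weight grading. The concluding deduction of full faithfulness from Yoneda is the same in both.
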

\begin{proof}
We have to show that the functor $\underline{X_A}: {}^H\SSS^\op \to \Set$ 
satisfies the sheaf condition \eqref{eqn:sheafcondition}, 
or equivalently \eqref{eqn:sheafconditionAlg}.
Given any ${}^H\SSS$-Zariski covering family $\{f_i : X_{B[s_i^{-1}]}\to X_B^{}\}$,
we therefore have to confirm that
\begin{flalign}
\xymatrix{
\Hom_{{}^H\AAA_{\mathrm{fp}}}^{}\big(A,B\big) ~\ar[r] & ~\prod\limits_{i}\,  \Hom_{{}^H\AAA_{\mathrm{fp}}}^{}\big(A,B[s_i^{-1}]\big)  ~\ar@<-0.5ex>[r]\ar@<0.5ex>[r]& ~\prod\limits_{i,j}\,  \Hom_{{}^H\AAA_{\mathrm{fp}}}^{}\big(A,B[s_i^{-1},\,s_j^{-1}]\big)
}
\end{flalign}
is an equalizer in $\Set$, where we used $\underline{X_A}(X_B) = \Hom_{{}^H\SSS}^{}(X_B,X_A)
= \Hom_{{}^H\AAA_{\mathrm{fp}}}^{}(A,B)$. Because the $\Hom$-functor
$\Hom_{{}^H\AAA_{\mathrm{fp}}} (A,-) : {}^H\AAA_{\mathrm{fp}} \to \Set$ 
preserves limits, it is sufficient to prove that
\begin{flalign}\label{eqn:equalizerAAA}
\xymatrix{
B~\ar[r]& ~\prod\limits_{i}\, B[s_i^{-1}]
~\ar@<-0.5ex>[r]\ar@<0.5ex>[r]& ~\prod\limits_{i,j}\, B[s_i^{-1},\,s_j^{-1}]
}
\end{flalign}
is an equalizer in ${}^H\AAA_{\mathrm{fp}}$.
\sk

Using the explicit characterization of
localizations (cf.\ \eqref{eqn:localizationformula}), let us take a generic element
\begin{flalign}
\prod_{i}\, [b_i\otimes c_i] \ \in \ \prod\limits_{i}\, B[s_i^{-1}] =
\prod\limits_{i}\, B\sqcup F_{\bol{0}}/(s_i\otimes x_i - \1)~,
\end{flalign}
where here there is no sum over the index $i$ but an implicit sum of 
the form $[b_i\otimes c_i] = \sum_{\alpha}\, [{(b_i)}_{\alpha} \otimes {(c_{i})}_\alpha]$
which we suppress. This is an element in the desired equalizer if and only if
\begin{flalign}\label{eqn:tmpsubcanonical}
[b_i\otimes c_i\otimes \1]  = [b_j\otimes \1\otimes c_j]~,
\end{flalign}
for all $i,j$, as equalities in $B[s_{i}^{-1},\,s_j^{-1}]$. Recalling that the relations in $B[s_{i}^{-1},\,s_j^{-1}]$
are given by $s_i\otimes x_i \otimes \1 =\1$ and $s_j\otimes \1\otimes x_j=\1$,
the equalities \eqref{eqn:tmpsubcanonical} hold if and only if
$[b_i\otimes c_i] = [b\otimes\1]$ with the same $b\in B$, 
for all $i$. Hence \eqref{eqn:equalizerAAA} is an equalizer.
\end{proof}

\begin{rem}
Heuristically, Proposition \ref{propo:fullyfaithfulYoneda} implies that
the theory of toric noncommutative spaces $X_A$ together with their morphisms
can be equivalently described within the category ${}^H\GGG$.
The sheaf $\underline{X_A}$ specified by \eqref{eqn:Yoneda}
is interpreted as the `functor of points' of the toric noncommutative space $X_A$.
In this interpretation \eqref{eqn:Yoneda} tells us all possible ways in which
any other toric noncommutative space $X_B$ may be mapped into $X_A$,
which captures the geometric structure of $X_A$.
A generic object $Y$ in ${}^H\GGG$ (which we call a generalized toric noncommutative space)
has a similar interpretation: The set $Y(X_B)$ tells us all possible ways in which
$X_B$ is mapped into $Y$. This is formalized by Yoneda's Lemma
\begin{flalign}
Y(X_B) \simeq \Hom_{{}^H\GGG}(\, \underline{X_B}\, ,Y)~,
\end{flalign}
for any object $X_B$ in ${}^H\SSS$ and any object $Y$ in ${}^H\GGG$.
\end{rem}

The advantage of the sheaf category ${}^H\GGG$ of generalized toric noncommutative spaces
over the original category ${}^H\SSS$ of toric noncommutative spaces
is that it has very good categorical properties, which are summarized in the notion
of a Grothendieck topos, see e.g.\ \cite{MacLaneMoerdijk}.
Most important for us are the facts that ${}^H\GGG$ has all (small) limits and all exponential objects.
Limits in ${}^H\GGG$ are computed object-wise, i.e.\ as in presheaf categories.
In particular, the product of two objects $Y,Z$  in ${}^H\GGG$ is the sheaf
specified by the functor $Y\times Z : {}^H\SSS^{\op} \to \Set$ that acts on objects as
\begin{subequations}
\begin{flalign}
(Y\times Z)(X_A) := Y(X_A)\times Z(X_A)~,
\end{flalign}
where on the right-hand side $\times $ is the Cartesian product in $\Set$,
and on morphisms $ f : X_A\to X_B$ as
\begin{flalign}
(Y\times Z)(f) := Y(f)\times Z(f)\, :\, ( Y\times Z)(X_B)
\longrightarrow (Y\times Z) (X_A)~.
\end{flalign}
\end{subequations}
The terminal object in ${}^H\GGG$ is the sheaf specified by the functor
$\{\ast\} : {}^H\SSS^{\op} \to \Set $ that acts on objects as
\begin{flalign}
\{\ast\}(X_A) := \{\ast\}~,
\end{flalign}
where on the right-hand side $\{\ast\}$ is the terminal object in
$\Set$, i.e.\ a singleton set,
and in the obvious way on morphisms. 
The fully faithful embedding ${}^H\SSS\to {}^H\GGG$ 
of Proposition \ref{propo:fullyfaithfulYoneda} 
is limit-preserving. In particular, we have 
\begin{flalign}\label{eqn:productpreserving}
\underline{X_A\times X_B} = \underline{X_A}\times \underline{X_B}\quad,\qquad \underline{X_{\bbK}} = \{\ast\}~,
\end{flalign}
for all objects $X_A,X_B$ in ${}^H\SSS$ and the terminal object $X_{\bbK}$ in ${}^H\SSS$. Here $\bbK$ is
the ${}^H\AAA_{\mathrm{fp}}$-algebra with trivial left $H$-coaction $c\mapsto \1_H\otimes c$, i.e.\ the initial object
in ${}^H\AAA_{\mathrm{fp}}$.
\sk

The exponential objects in ${}^H\GGG$ are constructed as follows:
Given two objects $Y,Z$ in ${}^H\GGG$, the exponential object $Z^Y$ is the sheaf
specified by the functor $Z^Y : {}^H\SSS^{\op} \to \Set$ that acts on objects as
\begin{subequations}\label{eqn:exponentialobject}
\begin{flalign}
Z^Y(X_A) := \Hom_{{}^H\GGG}\big(\, \underline{X_A} \times Y, Z\big)~,
\end{flalign}  
and on morphisms $f : X_A\to X_B$ as
\begin{flalign}
Z^Y(f) : Z^Y(X_B)\longrightarrow Z^Y(X_A)~,~~g\longmapsto g\circ (\, \underline{f}\times \id_{Y})~,
\end{flalign}
\end{subequations}
where $\underline{f}:\underline{X_A}\to\underline{X_B}$ 
is the ${}^H\GGG$-morphism specified by \eqref{eqn:Yoneda}.
The formation of exponential objects is functorial, i.e.\ there are obvious 
functors $(-)^Y : {}^H\GGG\to {}^H\GGG$ and $Z^{(-)} : {}^H\GGG^\op\to {}^H\GGG$,
for all objects $Y,Z$ in ${}^H\GGG$. Moreover, there are natural isomorphisms
\begin{flalign}
\{\ast\}^Y\simeq\{\ast\} ~,~~~Z^{\{\ast\}}\simeq Z~,~~~(Z\times
Z^\prime\, )^Y\simeq Z^Y\times {Z^\prime}\,^Y~,~~~
Z^{Y\times Y^\prime} \simeq {(Z^Y)}^{Y^\prime}~,
\end{flalign}
for all objects $Y,Y^\prime,Z,Z^\prime$ in ${}^H\GGG$.
\sk

Given two ordinary toric noncommutative spaces $X_A$ and $X_B$, i.e.\ objects in ${}^H\SSS$,
we can form the exponential object $\underline{X_B}^{\underline{X_A}}$
in the category of {\em generalized} toric noncommutative spaces ${}^H\GGG$.
The interpretation of $\underline{X_B}^{\underline{X_A}}$ is as 
the `space of maps' from $X_A$ to $X_B$.
In the present situation, the explicit description \eqref{eqn:exponentialobject} 
of exponential objects may be simplified via
\begin{flalign}
\nn \underline{X_B}^{\underline{X_A}}(X_C) &= \Hom_{{}^H\GGG}\big(\,
\underline{X_C}\times \underline{X_A}\,,\,\underline{X_B}\, \big)\\[4pt]
\nn &= \Hom_{{}^H\GGG}\big(\, \underline{X_C\times
  X_A}\,,\,\underline{X_B}\, \big)\\[4pt]
\nn &\simeq \Hom_{{}^H\SSS}\big(X_C\times X_A,X_B\big) \\[4pt] &= \Hom_{{}^H\AAA_{\mathrm{fp}}}(B, C\sqcup A)~.
\end{flalign}
In the first step we have used \eqref{eqn:exponentialobject}, in the
second step \eqref{eqn:productpreserving}
and the third step is due to Yoneda's Lemma. Hence
\begin{flalign}\label{eqn:exponentialrepresentables}
\underline{X_B}^{\underline{X_A}} \simeq  \Hom_{{}^H\AAA_{\mathrm{fp}}}(B, - \sqcup A) 
: {}^H\AAA_{\mathrm{fp}}\longrightarrow\Set
\end{flalign}
can be expressed in terms of ${}^H\AAA_{\mathrm{fp}}$-morphisms.
\begin{ex}\label{ex:mappingline}
To illustrate the differences between the exponential objects $\underline{X_B}^{\underline{X_A}}$ in ${}^H\GGG$
and the $\Hom$-sets $\Hom_{{}^H\SSS}(X_A,X_B)$ let us consider the simplest example
where $A=B=F_{\bol{m}}$, for some $ \bol{m}\neq\bol{0}$ in $\bbZ^n$. In this case the $\Hom$-set is given by
\begin{flalign}
\Hom_{{}^H\SSS}(X_{F_{\bol{m}}},X_{F_{\bol{m}}}) = \Hom_{{}^H\AAA_{\mathrm{fp}}} (F_{\bol{m}},F_{\bol{m}}) \simeq \bbK~,
\end{flalign}
because by $H$-equivariance 
any ${}^H\AAA_{\mathrm{fp}}$-morphism $\kappa : F_{\bol{m}}\to F_{\bol{m}}$ is of the form
$\kappa(x) = c\,x$, for some $c\in \bbK$; here $x$ denotes the generator of $F_{\bol{m}}$, whose
left $H$-coaction is by definition $\rho^{F_{\bol{m}}}(x) = t_{\bol{m}}\otimes x$. On the other hand,
the exponential object $\underline{X_{F_{\bol{m}}}}^{\underline{X_{F_{\bol{m}}}}}$
is a functor from ${}^H\SSS^{\op} = {}^H\AAA_{\mathrm{fp}}$ to $\Set$ and hence it gives us
a set for any test ${}^H\AAA_{\mathrm{fp}}$-algebra $A$. Using \eqref{eqn:exponentialrepresentables}
we obtain
\begin{flalign}
\underline{X_{F_{\bol{m}}}}^{\underline{X_{F_{\bol{m}}}}}(A) = 
\Hom_{{}^H\AAA_{\mathrm{fp}}}(F_{\bol{m}}, A \sqcup F_{\bol{m}}) ~.
\end{flalign}
For the initial object $A = \bbK$ in ${}^H\AAA_{\mathrm{fp}}$, we recover
the $\Hom$-set
\begin{flalign}
\underline{X_{F_{\bol{m}}}}^{\underline{X_{F_{\bol{m}}}}}(\bbK) =
 \Hom_{{}^H\SSS}(X_{F_{\bol{m}}},X_{F_{\bol{m}}})\simeq \bbK \ .
\end{flalign}
Let us now take $A_{\bbT_{\Theta}} = F_{-\bol{m},\bol{m}}/(y^\ast\, y -\1)$ to be the toric 
noncommutative circle, see Example~\ref{ex:algebras}. We write $y^{-1}:=y^\ast $ in $A_{\bbT_{\Theta}}$
and recall that the left $H$-coaction is given by $\rho^{F_{-\bol{m},\bol{m}} }(y) = t_{-\bol{m}}\otimes y$
and $\rho^{F_{-\bol{m},\bol{m}} }(y^{-1}) = t_{\bol{m}}\otimes y^{-1}$.
We then obtain an isomorphism of sets
\begin{flalign}
\underline{X_{F_{\bol{m}}}}^{\underline{X_{F_{\bol{m}}}}}(A_{\bbT_{\Theta}})
= \Hom_{{}^H\AAA_{\mathrm{fp}}}(F_{\bol{m}},  A_{\bbT_{\Theta}}\sqcup F_{\bol{m}}) \simeq F_{\bol{m}}~,
\end{flalign}
because elements $a\in F_{\bol{m}}$, i.e.\ polynomials $a= \sum_{j}\, 
c_j\,x^j$, for $c_j\in\bbK$, 
are in bijection with ${}^H\AAA_{\mathrm{fp}}$-morphisms
$\kappa : F_{\bol{m}}\to A_{\bbT_{\Theta}} \sqcup F_{\bol{m}}$ via 
\begin{flalign}
\kappa(x) = \sum_{j}\, c_j ~ y^{j-1}\otimes x^j~.
\end{flalign}
By construction each summand on the right-hand side 
has left $H$-coaction $\rho^{F_{-\bol{m},\bol{m}} }(y^{j-1}\otimes x^j) = t_{-(j-1)\,\bol{m}}\,t_{j\,\bol{m}} \otimes y^{j-1}\otimes x^j
= t_{\bol{m}}\otimes y^{j-1}\otimes x^j$.
Heuristically, this means that the exponential object $\underline{X_{F_{\bol{m}}}}^{\underline{X_{F_{\bol{m}}}}}$
captures all polynomial maps $F_{\bol{m}}\to F_{\bol{m}}$, while the $\Hom$-set 
$\Hom_{{}^H\SSS}(X_{F_{\bol{m}}},X_{F_{\bol{m}}})$  captures only
those that are $H$-equivariant which in the present case are the linear maps $x\mapsto c\,x$.
Similar results hold for generic exponential objects $\underline{X_B}^{\underline{X_A}}$ in ${}^H\GGG$;
in particular, their global points $\underline{X_B}^{\underline{X_A}}(X_\bbK)$ coincide with the $\Hom$-sets
$\Hom_{{}^H\SSS}(X_A,X_B)$ while their generalized points $\underline{X_B}^{\underline{X_A}}(X_C)$, for
$X_C$ an object in ${}^H\SSS$, capture additional maps. 
\end{ex}

\section{\label{sec:automorphism}Automorphism groups}
Associated to any object $X_A$ in ${}^H\SSS$ is its exponential object
$\underline{X_A}^{\underline{X_A}}$ in ${}^H\GGG$ which describes
the `space of maps' from $X_A$ to itself.
The object $\underline{X_A}^{\underline{X_A}}$ has a distinguished
point (called the identity) given by the ${}^H\GGG$-morphism
\begin{subequations}\label{eqn:identity}
\begin{flalign}
e : \{\ast\} \longrightarrow \underline{X_A}^{\underline{X_A}}
\end{flalign}
that is specified by the natural transformation with components
\begin{flalign}
e_{X_B} : \{\ast\} \longrightarrow \Hom_{{}^H\SSS}\big(X_B\times X_A,X_A\big)~,~~\ast\longmapsto \big(\pi_{2} : 
X_B\times X_A \to X_A\big)~
\end{flalign}
\end{subequations}
given by the canonical projection ${}^H\SSS$-morphisms of the product.
Under the Yoneda bijections $\Hom_{{}^H\GGG}(\{\ast\}, \underline{X_A}^{\underline{X_A}}) \simeq 
\underline{X_A}^{\underline{X_A}}(\{\ast\})\simeq \Hom_{{}^H\SSS}(X_A,X_A)$, $e$ is mapped
to the identity ${}^H\SSS$-morphism $\id_{X_A}$. 
Moreover, there is a composition ${}^H\GGG$-morphism
\begin{subequations}\label{eqn:composition}
\begin{flalign}
\bullet : \underline{X_A}^{\underline{X_A}} \times \underline{X_A}^{\underline{X_A}}\longrightarrow \underline{X_A}^{\underline{X_A}}
\end{flalign}
that is specified by the natural transformation with components
\begin{flalign}
\nn \bullet_{X_B} : \Hom_{{}^H\SSS}\big(X_B\times X_A,X_A\big)\times \Hom_{{}^H\SSS}\big(X_B\times X_A,X_A\big)&\longrightarrow 
\Hom_{{}^H\SSS}\big(X_B\times X_A,X_A\big)~,\\
 (g,h) & \longmapsto g\bullet_{X_B} h
\end{flalign}
defined by
\begin{flalign}
g\bullet_{X_B} h := g\circ (\id_{X_B}\times h)\circ (\mathrm{diag}_{X_B}\times\id_{X_A}) : X_B\times X_A\longrightarrow X_A~,
\end{flalign}
\end{subequations}
for all ${}^H\SSS$-morphisms $g,h : X_B\times X_A\to X_A$.
The diagonal ${}^H\SSS$-morphism $\mathrm{diag}_{X_B} : X_B\to X_B\times X_B$
is defined as usual via universality of products by
\begin{flalign}
\xymatrix@C=4.0em{
& \ar[dl]_-{\id_{X_B}}X_B \ar@{-->}[d]^-{\mathrm{diag}_{X_B}}\ar[dr]^-{\id_{X_B}}& \\
X_B &\ar[l]^-{\pi_1} X_B\times X_B \ar[r]_-{\pi_2}& X_B
}
\end{flalign}
The object $ \underline{X_A}^{\underline{X_A}}$ together with the identity \eqref{eqn:identity}
and composition ${}^H\GGG$-morphisms \eqref{eqn:composition} is a monoid object
in ${}^H\GGG$: It is straightforward to verify that the ${}^H\GGG$-diagrams
\begin{subequations}
\begin{flalign}
\xymatrix{
\ar[d]_-{\id\times\bullet}\underline{X_A}^{\underline{X_A}} \times \underline{X_A}^{\underline{X_A}}\times \underline{X_A}^{\underline{X_A}}\ar[rr]^-{\bullet\times \id}&& \underline{X_A}^{\underline{X_A}}\times \underline{X_A}^{\underline{X_A}}\ar[d]^-{\bullet}\\
\underline{X_A}^{\underline{X_A}}\times \underline{X_A}^{\underline{X_A}}\ar[rr]_-{\bullet} && \underline{X_A}^{\underline{X_A}}
}
\end{flalign}
and
\begin{flalign}
\xymatrix{
\ar[drr]_-{\simeq }\{\ast\} \times \underline{X_A}^{\underline{X_A}} \ar[rr]^-{e\times\id} &&\underline{X_A}^{\underline{X_A}}\times\underline{X_A}^{\underline{X_A}}\ar[d]^-{\bullet} && \ar[ll]_-{\id\times e}\underline{X_A}^{\underline{X_A}}\times\{\ast\}\ar[dll]^-{\simeq}\\
&& \underline{X_A}^{\underline{X_A}} &&
}
\end{flalign}
\end{subequations}
commute.
Notice that $\underline{X_A}^{\underline{X_A}}$ is not a group object in ${}^H\GGG$
because, loosely speaking, generic maps do not have an inverse. We may however construct
the `subobject of invertible maps' (in a suitable sense to be detailed below) 
of the monoid object $\underline{X_A}^{\underline{X_A}}\,$, 
which then becomes a group object in ${}^H\GGG$ called 
the automorphism group $\Aut(X_A)$ of $X_A$.
\sk

Let us apply the fully faithful functor $\mathrm{Sh}({}^H\SSS) \to \mathrm{PSh}({}^H\SSS)$
(which assigns to sheaves their underlying presheaves) 
on the monoid object $(\, \underline{X_A}^{\underline{X_A}}\, ,\bullet, e)$ in ${}^H\GGG = \mathrm{Sh}({}^H\SSS)$
to obtain the monoid object $(\, \underline{X_A}^{\underline{X_A}}\, ,\bullet, e)$ in $\mathrm{PSh}({}^H\SSS)$,
denoted with abuse of notation by the same symbol.
This monoid object in $\mathrm{PSh}({}^H\SSS)$ may be equivalently regarded
as a functor ${}^H\SSS^{\op} \to \mathsf{Monoid}$ with values in the category of ordinary $\Set$-valued monoids 
(i.e.\ monoid objects in the category $\Set$). The functor assigns to
any object $X_B$ in ${}^H\SSS$ the monoid
\begin{subequations}\label{eqn:stagewisemonoid}
\begin{flalign}
\Big(\underline{X_A}^{\underline{X_A}}(X_B) , \bullet_{X_B} , e_{X_B}\Big) 
\end{flalign}
and to any ${}^H\SSS$-morphism $f: X_B\to X_C$ the monoid morphism
\begin{flalign}
\underline{X_A}^{\underline{X_A}}(f)\, :\, \Big(\underline{X_A}^{\underline{X_A}}(X_C) , \bullet_{X_C} , e_{X_C}\Big)  \longrightarrow
\Big(\underline{X_A}^{\underline{X_A}}(X_B) , \bullet_{X_B} , e_{X_B}\Big) ~.
\end{flalign}
\end{subequations}
For any object $X_B$ in ${}^H\SSS$, we define $\Aut(X_A)(X_B)$ to be the
subset of elements $g\in \underline{X_A}^{\underline{X_A}}(X_B)$ for
which there exists $g^{-1}\in \underline{X_A}^{\underline{X_A}}(X_B)$
such that
\begin{flalign}\label{eqn:autdefi}
g\bullet_{X_B} g^{-1} = g^{-1}\bullet_{X_B} g = e_{X_B} \ .
\end{flalign}
Because the inverse of an element in a monoid (if it exists) is always unique, 
it follows that any element $g\in \Aut(X_A)(X_B)$ has a unique inverse $g^{-1}\in \Aut(X_A)(X_B)$,
and that the inverse of $g^{-1}$ is $g$. The monoid structure on $\underline{X_A}^{\underline{X_A}}(X_B)$
induces to $\Aut(X_A)(X_B)$, because the inverse of $e_{X_B}$ is $e_{X_B}$ itself
and the inverse of $g\bullet_{X_B} h$ is $(g\bullet_{X_B} h)^{-1} = h^{-1}\bullet_{X_B} g^{-1}$.
Denoting by
\begin{flalign}
\mathrm{inv}_{X_B} : \Aut(X_A)(X_B)\longrightarrow \Aut(X_A)(X_B)~,~~g\longmapsto g^{-1}
\end{flalign}
the map that assigns the inverse, we obtain for any object $X_B$ in ${}^H\SSS$
a group 
\begin{subequations}\label{eqn:stagewisegroup}
\begin{flalign}
\big(\Aut(X_A)(X_B), \bullet_{X_B}, e_{X_B},\mathrm{inv}_{X_B}\big)~.
\end{flalign}
The monoid morphism $\underline{X_A}^{\underline{X_A}}(f)$ in \eqref{eqn:stagewisemonoid}
induces a group morphism which we denote by
\begin{flalign}
 \resizebox{0.91\textwidth}{!}{$\Aut(X_A)(f)\,:\,\big(\Aut(X_A)(X_C), \bullet_{X_C}, e_{X_C},\mathrm{inv}_{X_C}\big)\longrightarrow
\big(\Aut(X_A)(X_B), \bullet_{X_B}, e_{X_B},\mathrm{inv}_{X_B}\big)~.$}
\end{flalign}
\end{subequations}
Hence we have constructed a functor ${}^H\SSS^{\op} \to \mathsf{Group}$ with values 
in the category of ordinary $\Set$-valued groups (i.e.\ group objects in $\Set$), which we
can equivalently regard as a group object $(\Aut(X_A),\bullet,e,\mathrm{inv})$ 
in the category $\mathrm{PSh}({}^H\SSS)$. Notice further 
that $\Aut(X_A)$ is a subobject of $\underline{X_A}^{\underline{X_A}}$
in the category $\mathrm{PSh}({}^H\SSS)$. 
\begin{propo}\label{propo:autgroup}
For any object $X_A$ in ${}^H\SSS$, the presheaf $\Aut(X_A)$
satisfies the sheaf condition \eqref{eqn:sheafcondition}. In particular,
$(\Aut(X_A),\bullet,e,\mathrm{inv})$ is the subobject of invertibles 
of the monoid object $(\underline{X_A}^{\underline{X_A}},\bullet,e)$ in ${}^H\GGG$
and hence a group object in ${}^H\GGG$.
\end{propo}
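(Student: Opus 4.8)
The plan is to leverage that the exponential object $E:=\underline{X_A}^{\underline{X_A}}$ is by construction a sheaf, cf.\ \eqref{eqn:exponentialobject}, and that $\Aut(X_A)$ sits inside it as a sub-presheaf: for every object $X_B$ in ${}^H\SSS$ the set $\Aut(X_A)(X_B)$ is by definition the set of invertible elements of the monoid $\big(E(X_B),\bullet_{X_B},e_{X_B}\big)$ of \eqref{eqn:stagewisemonoid}, and the presheaf inclusion $\Aut(X_A)\hookrightarrow E$ is a monomorphism. Since a sub-presheaf of a separated presheaf is separated, the ``uniqueness'' half of the sheaf condition \eqref{eqn:sheafcondition} for $\Aut(X_A)$ is inherited from $E$ for free. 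The entire content of the proposition is therefore the gluing (descent) half, together with the purely formal ``in particular'' clauses.

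For the gluing, I would fix an ${}^H\SSS$-Zariski covering family $\{f_i:X_{B[s_i^{-1}]}\to X_B\}$ and a matching family $g_i\in\Aut(X_A)(X_{B[s_i^{-1}]})$, i.e.\ sections whose images in $\Aut(X_A)(X_{B[s_i^{-1},s_j^{-1}]})$ agree for all $i,j$ (using Corollary \ref{cor:intersection} to identify the fibre products of covering morphisms). Viewed in $E$, this matching family glues to a unique $g\in E(X_B)$ with $E(f_i)(g)=g_i$. The key preliminary observation is that the inverses $g_i^{-1}\in E(X_{B[s_i^{-1}]})$ again form a matching family: each restriction map is a monoid morphism by \eqref{eqn:stagewisemonoid}, hence preserves inverses, so the matching of the $g_i$ on the overlaps forces the matching of the $g_i^{-1}$; thus the $g_i^{-1}$ glue to a unique $h\in E(X_B)$.

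The one step with genuine content is showing that $h$ is a two-sided inverse of $g$ in $E(X_B)$. Here I would use that the composition $\bullet:E\times E\to E$ and the identity $e:\{\ast\}\to E$ are ${}^H\GGG$-morphisms \eqref{eqn:composition}, \eqref{eqn:identity}, i.e.\ natural transformations, so they commute with restriction along each $f_i$. Restricting the section $g\bullet_{X_B}h$ thus gives $E(f_i)(g)\bullet_{X_{B[s_i^{-1}]}}E(f_i)(h)=g_i\bullet_{X_{B[s_i^{-1}]}}g_i^{-1}=e_{X_{B[s_i^{-1}]}}$, which coincides with the restriction of $e_{X_B}$; by the separatedness of $E$ this yields $g\bullet_{X_B}h=e_{X_B}$, and the symmetric computation gives $h\bullet_{X_B}g=e_{X_B}$. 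Hence $g$ is invertible, i.e.\ $g\in\Aut(X_A)(X_B)\subseteq E(X_B)$, and it restricts to the $g_i$; uniqueness of such a $g$ is inherited from $E$. This establishes the sheaf condition \eqref{eqn:sheafcondition} for $\Aut(X_A)$, and I expect this invertibility argument to be the only real obstacle — everything else is bookkeeping with the equalizer \eqref{eqn:sheafcondition} and naturality.

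Finally, for the ``in particular'' assertions: once $\Aut(X_A)$ is known to be a sheaf, it is a subobject of $E$ in ${}^H\GGG$, because monomorphisms and the relevant limits in the topos ${}^H\GGG$ are computed as in $\mathrm{PSh}({}^H\SSS)$; the natural transformations $\bullet$, $e$ and $\mathrm{inv}$ of \eqref{eqn:stagewisegroup} are then automatically ${}^H\GGG$-morphisms, and the group-object diagrams commute in ${}^H\GGG$ since they are tested object-wise and hold in each $\Set$-valued group $\big(\Aut(X_A)(X_B),\bullet_{X_B},e_{X_B},\mathrm{inv}_{X_B}\big)$. That $\Aut(X_A)$ is precisely the subobject of invertibles of the monoid $(E,\bullet,e)$ is then immediate from its stage-wise definition.
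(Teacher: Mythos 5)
Your proposal is correct and follows essentially the same route as the paper: glue the matching family in the sheaf $\underline{X_A}^{\underline{X_A}}$, use that the restriction maps are monoid morphisms to see that the inverses $g_i^{-1}$ also form a matching family and glue, and then verify the two-sided inverse identity by restricting along the cover and invoking separatedness of $\underline{X_A}^{\underline{X_A}}$. The only cosmetic difference is that you isolate the separatedness half explicitly, which the paper leaves implicit.
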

\begin{proof}
Given any ${}^H\SSS$-Zariski covering family $\{f_i : X_{B[s_i^{-1}]} \to X_{B}\}$,
we have to show that
\begin{flalign}
\xymatrix{
\Aut(X_A)\big(X_B\big) ~\ar[r] & ~\prod\limits_{i}\,  \Aut(X_A)\big(X_{B[s_i^{-1}]}\big)  ~\ar@<-0.5ex>[r]\ar@<0.5ex>[r]& ~\prod\limits_{i,j}\,  \Aut(X_A)\big(X_{B[s_i^{-1},\,s_j^{-1}]}\big)~
}
\end{flalign}
is an equalizer in $\Set$. Recalling that $\Aut(X_A)$ is the sub-presheaf of the sheaf $\underline{X_A}^{\underline{X_A}}$
specified by the invertibility conditions \eqref{eqn:autdefi}, an element
in $\prod_{i}\,  \Aut(X_A)(X_{B[s_i^{-1}]})$ can be represented by an element
\begin{flalign}
\prod_{i} \, g_i \ \in \ \prod\limits_{i}\,
\underline{X_A}^{\underline{X_A}} \big(X_{B[s_i^{-1}]} \big)~,
\end{flalign}
such that each $g_i$ has an inverse 
$g_i^{-1}\in \underline{X_A}^{\underline{X_A}}\big(X_{B[s_i^{-1}]} \big)$ 
in the sense that 
\begin{flalign}
g_i\bullet_{X_{B[s_i^{-1}]}} g_i^{-1} = g_i^{-1}\bullet_{X_{B[s_i^{-1}]}} g_i= e_{X_{B[s_i^{-1}]}}~.
\end{flalign}
This element is in the desired equalizer if and only if
\begin{flalign}\label{eqn:tmpmatching}
\underline{X_A}^{\underline{X_A}}(f_{i;j})\big( g_j\big) = \underline{X_A}^{\underline{X_A}}(f_{j;i})\big( g_i\big) ~,
\end{flalign}
for all $i,j$, where we used the compact notation $f_{i;j}$ introduced in \eqref{eqn:pullbacknotation}.
Because $\underline{X_A}^{\underline{X_A}}$ is a sheaf, we can
represent $\prod_{i} \, g_i$
by the element $g\in \underline{X_A}^{\underline{X_A}}(X_B)$
that is uniquely specified by $\underline{X_A}^{\underline{X_A}}(f_i)(g) = g_i$, for all $i$.
\sk

We have to show that $g\in \Aut(X_A)(X_B)\subseteq \underline{X_A}^{\underline{X_A}}(X_B)$, i.e.\ that there exists
$g^{-1}\in \underline{X_A}^{\underline{X_A}}(X_B)$ such that $g\bullet_{X_B}g^{-1}= g^{-1}\bullet_{X_B} g = e_{X_B}$.
Since $\underline{X_A}^{\underline{X_A}}(f_{i;j})$ and $\underline{X_A}^{\underline{X_A}}(f_{j;i})$ are monoid morphisms,
both sides of the equality \eqref{eqn:tmpmatching} are invertible and the inverse is given by
\begin{flalign}
\underline{X_A}^{\underline{X_A}}(f_{i;j})\big( g_j^{-1}\big) = \underline{X_A}^{\underline{X_A}}(f_{j;i})\big( g_i^{-1}\big) ~.
\end{flalign}
Using again the
property that $\underline{X_A}^{\underline{X_A}}$ is a sheaf,
we can represent $\prod_{i} \, g^{-1}_i$ by the element $\tilde{g} \in \underline{X_A}^{\underline{X_A}}(X_B) $
that is uniquely specified by $\underline{X_A}^{\underline{X_A}}(f_i)(\tilde{g}) = g^{-1}_i$, for all $i$.
It is now easy to check that $\tilde{g}$ is the inverse of $g$:
Using once more the property that $\underline{X_A}^{\underline{X_A}}(f_i)$ are monoid morphisms,
we obtain $\underline{X_A}^{\underline{X_A}}(f_i)(\tilde{g}\bullet_{X_B} g)= g_i^{-1}\bullet_{X_{B[s_i^{-1}]}} g_i 
= e_{X_{B[s_i^{-1}]}}$ and similarly 
$\underline{X_A}^{\underline{X_A}}(f_i)(g\bullet_{X_B} \tilde{g})= e_{X_{B[s_i^{-1}]}}$, for all $i$.
Because  $\underline{X_A}^{\underline{X_A}}$ is a sheaf, this implies
$\tilde{g}\bullet_{X_B} g = g\bullet_{X_B} \tilde{g} = e_{X_B}$ and hence that $\tilde{g} = g^{-1}$.
\end{proof}

\section{\label{sec:liealgebras}Lie algebras of automorphism groups}
The category ${}^H\GGG$ of generalized toric noncommutative spaces 
has a distinguished object $\underline{K}:=\underline{X_{F_{\bol{0}}}}$, where $F_{\bol{0}}$ 
is the free ${}^H\AAA_{\mathrm{fp}}$-algebra with one coinvariant generator $x$, i.e.\ $x\mapsto \1_H\otimes x$. 
We call $\underline{K}$ the line object as it describes the toric noncommutative line.
The line object $\underline{K}$ is a ring object in ${}^H\GGG$: The sum ${}^H\GGG$-morphism
$+ : \underline{K}\times \underline{K}\to \underline{K}$ is induced (via going opposite and the Yoneda embedding)
by the ${}^H\AAA_{\mathrm{fp}}$-morphism $F_{\bol{0}} \to F_{\bol{0}}\sqcup F_{\bol{0}}\,,~x\mapsto x\otimes \1 + \1\otimes x$.
The multiplication ${}^H\GGG$-morphism $\cdot : \underline{K}\times \underline{K}\to \underline{K}$ is induced
by the ${}^H\AAA_{\mathrm{fp}}$-morphism $F_{\bol{0}} \to F_{\bol{0}}\sqcup F_{\bol{0}}\,,~x\mapsto x\otimes x$. 
The (additive) zero element is
the ${}^H\GGG$-morphism $0:\{\ast\}\to \underline{K}$ induced by the 
${}^H\AAA_{\mathrm{fp}}$-morphism $F_{\bol{0}}\to \bbK\,,~x\mapsto 0\in\bbK$,
and the (multiplicative) unit element is the ${}^H\GGG$-morphism $1:\{\ast\}\to \underline{K}$ 
induced by the  ${}^H\AAA_{\mathrm{fp}}$-morphism $F_{\bol{0}}\to \bbK\,,~x\mapsto 1\in\bbK$.
Finally, the additive inverse ${}^H\GGG$-morphism $ \mathrm{inv}_{+} : \underline{K}\to \underline{K}$ is induced
by the ${}^H\AAA_{\mathrm{fp}}$-morphism $F_{\bol{0}}\to F_{\bol{0}}\,,~x\mapsto -x$.
It is straightforward, but slightly tedious, to confirm that these structures make $\underline{K}$ into a ring object 
in ${}^H\GGG$.
\begin{rem}\label{rem:ringfunctor}
Regarding the line object as a functor $\underline{K} : {}^H\SSS^\op \to \Set$, 
it assigns to an object $X_B$ in ${}^H\SSS$ the set 
\begin{subequations}
\begin{flalign}
\underline{K}(X_B)= \Hom_{{}^H\SSS}(X_B, K) = \Hom_{{}^H\AAA_{\mathrm{fp}}}(F_{\bol{0}}, B) \simeq B^{\bol{0}}~,
\end{flalign}
where $B^{\bol{0}} := \{b\in B \,:\, \rho^B(b)=\1_H\otimes b \}$ is
the set of coinvariants; in the last step we have used the fact that $F_{\bol{0}}$ is the free ${}^H\AAA_{\mathrm{fp}}$-algebra with one coinvariant generator,
hence ${}^H\AAA_{\mathrm{fp}}$-morphisms $F_{\bol{0}} \to B$ are in bijection with $B^{\bol{0}}$.
To an ${}^H\SSS$-morphism $f : X_B\to X_{B^\prime}$ it assigns the restriction of $f^\ast : B^\prime\to B$ 
to coinvariants, i.e.\
\begin{flalign}
\underline{K}(f) = f^\ast : {B^\prime}\,^{\bol{0}} \longrightarrow B^{\bol{0}}~.
\end{flalign}
\end{subequations}
The ${}^H\AAA_{\mathrm{fp}}$-algebra structure on $B,B^\prime$ induces a (commutative) ring structure on
$B^{\bol{0}},{B^\prime}\,^{\bol{0}}$ and $f^\ast$ preserves this ring
structure. Hence we have obtained a functor
${}^H\SSS^{\op}\to\mathsf{CRing}$ with values in the category of commutative rings (in $\Set$), which is
an equivalent way to describe the ring object structure on $\underline{K}$ introduced above.
\end{rem}

The ${}^H\AAA_{\mathrm{fp}}$-morphism $F_{\bol{0}} \to F_{\bol{0}}/(x^2)$ given by the quotient map
induces a monomorphism $\underline{D} := \underline{X_{F_{\bol{0}}/(x^2)}} \to \underline{X_{F_{\bol{0}}}}= \underline{K}$
in ${}^H\GGG$. The zero element, sum and additive inverse of $\underline{K}$
induce to $\underline{D}$, i.e.\ we obtain ${}^H\GGG$-morphisms 
$0: \{\ast\}\to \underline{D}$, $+ : \underline{D}\times \underline{D}\to \underline{D}$
and $\mathrm{inv}_{+}:\underline{D}\to \underline{D}$ which give
$\underline{D}$ the structure of an Abelian group object in ${}^H\GGG$. 
Moreover, $\underline{D}$ is a $\underline{K}$-module object in ${}^H\GGG$
with left $\underline{K}$-action ${}^H\GGG$-morphism $\cdot : \underline{K}\times \underline{D}\to \underline{D}$
induced by the  ${}^H\AAA_{\mathrm{fp}}$-morphism 
$F_{\bol{0}}/(x^2) \to F_{\bol{0}}\sqcup F_{\bol{0}}/(x^2) \,,~ x\mapsto x\otimes x$.
Heuristically, $\underline{D}$ describes the infinitesimal neighborhood of $0$ in $\underline{K}$,
i.e.\ $\underline{D}$ is an infinitesimally short line, so short that functions on $\underline{D}$ 
(which are described by $F_{\bol{0}}/(x^2)$) are polynomials of degree 1. 
\sk

Following the ideas of synthetic (differential) geometry \cite{Kock,Lavendhomme,MoerdijkReyes},
we may use $\underline{D}$ to define the tangent bundle of a generalized toric noncommutative space.
\begin{defi}
Let $Y$ be any object in ${}^H\GGG$. The (total space of) the tangent bundle of $Y$ is
the exponential object
\begin{subequations}
\begin{flalign}
TY := Y^{\underline{D}}
\end{flalign}
in ${}^H\GGG$. The projection ${}^H\GGG$-morphism is
\begin{flalign}
\pi := Y^{0: \{\ast\} \to \underline{D}} \,: \, TY = Y^{\underline{D}} \longrightarrow Y^{\{\ast\}} \simeq Y~,
\end{flalign}
\end{subequations}
where we use the property that $Y^{(-)} : {}^H\GGG^\op \to {}^H\GGG$ is a functor.
\end{defi}
\begin{rem}\label{rem:functorperspectiveT}
Focusing on the underlying functors $Y : {}^H\SSS^\op\to \Set$
of objects $Y$ in ${}^H\GGG$, there is an equivalent, and practically useful,
characterization of the tangent bundle $TY$.
By Yoneda's Lemma and the fact that the exponential $(-)^{\underline{D}} : {}^H\GGG\to{}^H\GGG$ 
is the right adjoint of the product $-\times \underline{D}: {}^H\GGG\to {}^H\GGG$,
there are natural isomorphisms
\begin{flalign}
TY = Y^{\underline{D}} \simeq \Hom_{{}^H\GGG}\big(\, \underline{(-)}\,
, Y^{\underline{D}}\big)\simeq 
\Hom_{{}^H\GGG}\big(\, \underline{(-)}\times \underline{D}\, , Y\big)
\simeq Y\big(-\times D\big)~.
\end{flalign}
Hence the set $TY(X_B)$ at stage $X_B$ is simply given by $Y(X_B\times D)$ at stage $X_B\times D$.
The components of the projection then read as
\begin{flalign}
\pi_{X_B} = Y(\id_{X_B}\times 0)\,:\, Y(X_B\times D) \longrightarrow Y(X_B)~,
\end{flalign}
where $\id_{X_B}\times 0 : X_B\simeq  X_B\times\{\ast\} \to X_B\times D$ is the product of
the ${}^H\SSS$-morphisms $\id_{X_B} : X_B\to X_B$ and 
$0:\{\ast\} \to D$.
\end{rem}

We shall now study in more detail the tangent bundle
\begin{flalign}
\pi : T\Aut(X_A) \longrightarrow \Aut(X_A)~
\end{flalign}
of the automorphism group of some object $X_A$ in ${}^H\SSS$.
We are particularly interested in the fibre $T_e\Aut(X_A)$ 
of this bundle over the identity $e :\{\ast\} \to \Aut(X_A) $, because 
it defines the Lie algebra of $\Aut(X_A) $.
The fibre $T_e\Aut(X_A)$ is defined as the pullback
\begin{flalign}\label{eqn:tangentfibre}
\xymatrix{
\ar@{-->}[d]T_e\Aut(X_A) \ar@{-->}[rr] && T\Aut(X_A)\ar[d]^-{\pi}\\
\{\ast\}\ar[rr]_-{e} && \Aut(X_A)
}
\end{flalign}
in ${}^H\GGG$. In particular, $T_e\Aut(X_A)$ is an object in ${}^H\GGG$.
\sk

Using the perspective explained in Remark \ref{rem:functorperspectiveT},
we obtain 
\begin{flalign}
T\Aut(X_A)(X_B)  = \Aut(X_A)\big(X_B\times D\big)~,
\end{flalign}
for all objects $X_B$ in ${}^H\SSS$. 
The pullback \eqref{eqn:tangentfibre} then introduces a further condition
\begin{flalign}\label{eqn:autLie}
T_e\Aut(X_A)(X_B) = \left\{ g\in \Aut(X_A)\big(X_B\times D\big)\, :\, \Aut(X_A)(\id_{X_B}\times 0) (g) = e_{X_{B}}\right\}~,
\end{flalign}
for all objects $X_B$ in ${}^H\SSS$.
Using \eqref{eqn:autdefi} and \eqref{eqn:exponentialrepresentables}, 
it follows that any $g\in \Aut(X_A)(X_B\times D)$ is
an ${}^H\AAA_{\mathrm{fp}}$-morphism $A\to B\sqcup F_{\bol{0}}/(x^2) \sqcup A$
satisfying the invertibility condition imposed in \eqref{eqn:autdefi}.
For our purposes it is more convenient to equivalently  regard $g$ as an 
${}^H\AAA_{\mathrm{fp}}$-morphism $g : A\to F_{\bol{0}}/(x^2)\sqcup  B \sqcup A$
with target $ F_{\bol{0}}/(x^2)\sqcup  B \sqcup A$ instead of $B\sqcup F_{\bol{0}}/(x^2) \sqcup A$
(flipping $F_{\bol{0}}/(x^2)$ and $B$ is the usual flip map because the left $H$-coaction on $F_{\bol{0}}/(x^2)$
is trivial). Because any element in $F_{\bol{0}}/(x^2)$ is of the form $c_0 + c_1\,x$, for some $c_0,c_1\in\bbK$,
we obtain two ${}^H\MMM$-morphisms $g_0,g_1 : A\to B\sqcup A$ which are characterized uniquely by
\begin{flalign}\label{eqn:gsplitting}
g(a) =\1_{F_{\bol{0}}/(x^2)}\otimes g_0(a) + x\otimes g_1(a)~,
\end{flalign}
for all $a\in A$. Since $g : A\to  F_{\bol{0}}/(x^2)\sqcup B \sqcup A$ is an ${}^H\AAA_{\mathrm{fp}}$-morphism,
it follows that $g_0 : A\to B\sqcup A$ is an ${}^H\AAA_{\mathrm{fp}}$-morphism
and that $g_1 : A\to B\sqcup A$ satisfies the condition
\begin{flalign}\label{eqn:Leibniztmp}
g_1(a\,a^\prime\, ) = g_1(a)\,g_0(a^\prime\, ) +
g_0(a)\,g_1(a^\prime\, )~,
\end{flalign}
for all $a,a^\prime\in A$.
From \eqref{eqn:autLie} it follows that $g\in T_e\Aut(X_A)(X_B)$
if and only if $g_0 = \iota_2 : A\to B\sqcup A\,,~a\mapsto \1_B\otimes a$ 
is the canonical inclusion for the coproduct.
Then \eqref{eqn:Leibniztmp} simplifies to
\begin{flalign}\label{eqn:Leibniz}
g_1(a\,a^\prime\, ) = g_1(a)\,(\1_B\otimes a^\prime\, )+ (\1_B\otimes
a)\, g_1(a^\prime\, )~,
\end{flalign}
for all $a,a^\prime\in A$. Notice that \eqref{eqn:Leibniz} is the Leibniz rule
for the $A$-bimodule structure on $B\sqcup A$ that is induced
by the ${}^H\AAA_{\mathrm{fp}}$-algebra structure on $B\sqcup A$
and the inclusion ${}^H\AAA_{\mathrm{fp}}$-morphism $\iota_2 : A\to B\sqcup A$.
\begin{lem}\label{lem:invertibility}
Let $g : A\to  F_{\bol{0}}/(x^2) \sqcup B \sqcup A$ be any ${}^H\AAA_{\mathrm{fp}}$-morphism
such that $g_0 = \iota_2 : A\to B\sqcup A$ in the notation of \eqref{eqn:gsplitting}.
Then the  ${}^H\AAA_{\mathrm{fp}}$-morphism 
$\tilde{g} :  A\to   F_{\bol{0}}/(x^2)\sqcup B \sqcup A$ defined by 
$\tilde{g}_0 = g_0 = \iota_2$ and $\tilde{g}_1 = -g_1$ is the inverse of $g$
in the sense that $g\bullet_{X_B\times D} \tilde{g} = \tilde{g}\bullet_{X_B\times D} g = e_{X_B\times D}$.
\end{lem}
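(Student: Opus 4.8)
The plan is to proceed in three steps: first check that $\tilde g$ is a well-defined ${}^H\AAA_{\mathrm{fp}}$-morphism, then make the composition $\bullet$ at the stage $X_B\times D$ explicit on the algebra side, and finally compute $g\bullet\tilde g$ and $\tilde g\bullet g$ directly, the only non-bookkeeping point being that a single ``second order'' term is proportional to $x^2$ and hence vanishes in $F_{\bol{0}}/(x^2)$. For the first step, write $\Lambda:=F_{\bol{0}}/(x^2)=\bbK\,\1\oplus\bbK\,x$ and use the vector space identification $\Lambda\sqcup B\sqcup A=\Lambda\otimes(B\sqcup A)$, so that every linear map $\phi:A\to\Lambda\sqcup B\sqcup A$ is uniquely of the form $\phi(a)=\1\otimes\phi_0(a)+x\otimes\phi_1(a)$ with $\phi_0,\phi_1:A\to B\sqcup A$ linear, and $\phi_0,\phi_1$ are ${}^H\MMM$-morphisms if and only if $\phi$ is, because $x$ is $H$-coinvariant. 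Expanding $\phi(a\,a^\prime)=\phi(a)\,\phi(a^\prime)$ by means of the product \eqref{eqn:tensoralgebra} --- in which every $R$-matrix factor reduces to a counit since $\1$ and $x$ are coinvariant, and the $x\otimes x$ contribution drops since $x^2=0$ --- shows that $\phi$ is an ${}^H\AAA_{\mathrm{fp}}$-morphism precisely when $\phi_0$ is one and $\phi_1$ obeys the twisted Leibniz rule \eqref{eqn:Leibniztmp} (unitality being automatic). Applied to $\tilde g=(\iota_2,-g_1)$: $\iota_2$ is an ${}^H\AAA_{\mathrm{fp}}$-morphism, and since $g$ is one with $g_0=\iota_2$, its component $g_1$ satisfies \eqref{eqn:Leibniz}, hence so does $-g_1$; thus $\tilde g$ is a well-defined ${}^H\AAA_{\mathrm{fp}}$-morphism.

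For the second step, set $C:=B\sqcup\Lambda$, so that $X_B\times D=X_C$ and a point of $\underline{X_A}^{\underline{X_A}}$ at stage $X_C$ is an ${}^H\AAA_{\mathrm{fp}}$-morphism $A\to C\sqcup A$. Dualising the definition \eqref{eqn:composition} of $\bullet_{X_C}$ --- using $X_C\times X_A=X_{C\sqcup A}$, that $(\mathrm{diag}_{X_C})^{\ast}:C\sqcup C\to C$ is the codiagonal (``fold'') ${}^H\AAA_{\mathrm{fp}}$-morphism $\nabla_C$, which exists precisely because $C$ obeys the commutativity constraint \eqref{eqn:comalg} and which on elements is the product of $C$, and that $(\id_{X_C}\times h)^{\ast}=\id_C\sqcup h$ --- one obtains, for ${}^H\AAA_{\mathrm{fp}}$-morphisms $g,h:A\to C\sqcup A$,
\[
g\bullet_{X_C}h\;=\;(\nabla_C\sqcup\id_A)\circ(\id_C\sqcup h)\circ g\;:\;A\longrightarrow C\sqcup A\,,
\]
while $e_{X_C}=\iota_2:A\to C\sqcup A$ by \eqref{eqn:identity}. (Flipping $B$ and $\Lambda$ is the ordinary flip, $\Lambda$ having trivial coaction, so this is the operation $\bullet_{X_B\times D}$ of the Lemma.)

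For the third step, evaluate $g\bullet\tilde g$ on $a\in A$ using the splitting \eqref{eqn:gsplitting}, $g(a)=\1\otimes\iota_2(a)+x\otimes g_1(a)$ and $\tilde g(a^\prime)=\1\otimes\iota_2(a^\prime)-x\otimes g_1(a^\prime)$, expanding in the basis $\{\1,x\}$ of $\Lambda$. Pushing $g$ through $\id_C\sqcup\tilde g$ and then $\nabla_C\sqcup\id_A$ yields four terms; in each of them the $C\sqcup C$-slot is multiplied down inside $C=B\sqcup\Lambda$ by \eqref{eqn:tensoralgebra}, and in three of the four one of the two $\Lambda$-entries is $\1$ --- so the $R$-matrix factors are trivial and the $\Lambda$-parts multiply to $\1$, $x$, $x$ respectively --- while the fourth carries the factor $x\cdot x=0$. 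What survives is
\[
(g\bullet_{X_B\times D}\tilde g)(a)\;=\;\1\otimes\iota_2(a)+x\otimes\big(g_1(a)+\tilde g_1(a)\big)\;=\;\1\otimes\iota_2(a)\,,
\]
i.e.\ $(g\bullet\tilde g)_0=\iota_2$ and $(g\bullet\tilde g)_1=0$. By the uniqueness of the decomposition $(\phi_0,\phi_1)$ this means $g\bullet_{X_B\times D}\tilde g=e_{X_B\times D}$, and the identical computation with $g$ and $\tilde g$ interchanged gives $\tilde g\bullet_{X_B\times D}g=e_{X_B\times D}$.

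The conceptual input is slim: everything hinges on $x^2=0$ killing the ``second order'' cross term, so that $\bullet$ on this infinitesimal stage is just addition of the derivation parts $g_1$. The only genuine work lies in the second and third steps --- dualising $\bullet$ correctly to the fold-map formula, keeping track of which tensor slot carries which algebra factor as one pushes through $\id_C\sqcup h$ and $\nabla_C\sqcup\id_A$, and verifying that all $R$-matrix factors in \eqref{eqn:tensoralgebra} are trivial here because the units and $x$ are $H$-coinvariant --- which I expect to be the main (if mild) obstacle.
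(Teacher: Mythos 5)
Your proposal is correct and follows the same route as the paper: the paper's proof is a one-line computation invoking \eqref{eqn:composition} and $x^2=0$ to get $(g\bullet_{X_B\times D}\tilde g)(a)=\1\otimes\1_B\otimes a + x\otimes(g_1(a)+\tilde g_1(a))=\1\otimes\1_B\otimes a$, which is exactly your third step. Your additional steps (well-definedness of $\tilde g$ via the Leibniz rule for $-g_1$, and the explicit dualisation of $\bullet$ to the fold-map formula) are correct elaborations of what the paper leaves implicit.
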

\begin{proof}
From the hypothesis, \eqref{eqn:composition} and $x^2=0$ it follows that
\begin{flalign}
 \resizebox{0.91\textwidth}{!}{($g\bullet_{X_B\times D} \tilde{g})\big(a\big) = \1_{F_{\bol{0}}/(x^2)}\otimes \1_{B} \otimes a  
+ x\otimes \big(g_{1}(a) + \tilde{g}_1(a)\big) = \1_{F_{\bol{0}}/(x^2)}\otimes \1_{B} \otimes a  = e_{X_B\times D}(a)~,$}
\end{flalign}
for all $a\in A$, and similarly that $\tilde{g}\bullet_{X_B\times D} g = e_{X_B\times D}$.
\end{proof}

This result allows us to give a very explicit characterization of the functor
underlying the object $T_e\Aut(X_A)$ in ${}^H\GGG$. Let us define
the functor ${}^H\Der(A,-\sqcup A): {}^H\SSS^\op \to \Set$
on objects $X_B$ by
\begin{subequations}\label{eqn:HDer}
\begin{flalign}
 {}^H\Der(A,-\sqcup A)\big(X_B\big) := {}^H\Der(A,B\sqcup A) \ ,
\end{flalign}
which is the subset of $v\in \Hom_{{}^H\MMM}(A,B\sqcup A)$ satisfying \eqref{eqn:Leibniz}, and on morphisms $f : X_B\to X_{B^\prime}$ by
\begin{flalign}
\nn {}^H\Der(A,-\sqcup A)\big(f\big)\, :\, {}^H\Der(A,B^\prime\sqcup A)&\longrightarrow {}^H\Der(A,B\sqcup A)~,\\
\big(v : A\to B^\prime \sqcup A\big)&\longmapsto\big((f^\ast\otimes \id_{A})\circ v : A\to B \sqcup A\big)~.
\end{flalign}
\end{subequations}
\begin{cor}\label{cor:DerAutIso}
The presheaf underlying $T_e\Aut(X_A)$ is isomorphic
to ${}^H\Der(A,-\sqcup A)$ via the natural isomorphism
with components
\begin{flalign}\label{eqn:DerLieiso}
T_e\Aut(X_A)\big(X_B\big)\longrightarrow {}^H\Der(A,B\sqcup A)~,~~g\longmapsto g_1~,
\end{flalign}
where $g_1$ is defined according to \eqref{eqn:gsplitting}. Hence
${}^H\Der(A,-\sqcup A)$ is a sheaf, i.e.\ an object in ${}^H\GGG$,
and $T_e\Aut(X_A)$ is also isomorphic to ${}^H\Der(A,-\sqcup A)$ in ${}^H\GGG$.
\end{cor}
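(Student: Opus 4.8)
The plan is to exhibit \eqref{eqn:DerLieiso} as a natural bijection at each stage $X_B$, with inverse built explicitly, and then deduce the sheaf property of ${}^H\Der(A,-\sqcup A)$ and the ${}^H\GGG$-isomorphism formally. Most of the substance has already been extracted: by \eqref{eqn:autLie} and the discussion preceding Lemma \ref{lem:invertibility}, an element $g\in T_e\Aut(X_A)(X_B)$ is precisely an invertible (with respect to $\bullet_{X_B\times D}$) ${}^H\AAA_{\mathrm{fp}}$-morphism $g:A\to F_{\bol{0}}/(x^2)\sqcup B\sqcup A$ with $g_0=\iota_2$ in the splitting \eqref{eqn:gsplitting}, and then \eqref{eqn:Leibniz} already records that $g_1$ obeys the Leibniz rule defining ${}^H\Der(A,B\sqcup A)$. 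So I would first check that \eqref{eqn:DerLieiso} is well defined — $g_1$ is an ${}^H\MMM$-morphism because $g$ is one and the coaction on $F_{\bol{0}}/(x^2)$ is trivial, and it satisfies \eqref{eqn:Leibniz} — and injective, which is immediate since $g$ is recovered from $(g_0,g_1)=(\iota_2,g_1)$ via \eqref{eqn:gsplitting}.

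For surjectivity I would, given $v\in{}^H\Der(A,B\sqcup A)$, set $g(a):=\1_{F_{\bol{0}}/(x^2)}\otimes\iota_2(a)+x\otimes v(a)$ and verify that $g$ is an ${}^H\AAA_{\mathrm{fp}}$-morphism $A\to F_{\bol{0}}/(x^2)\sqcup B\sqcup A$: it is an ${}^H\MMM$-morphism since $\iota_2$ and $v$ are and $F_{\bol{0}}/(x^2)$ is $H$-coinvariant, it is unital because \eqref{eqn:Leibniz} forces $v(\1_A)=0$, and it is multiplicative by the computation that produced \eqref{eqn:Leibniztmp} run backwards, using $x^2=0$ together with \eqref{eqn:Leibniz} (and the fact that the product on $F_{\bol{0}}/(x^2)\sqcup(B\sqcup A)$ carries no braiding factor, the first tensor leg being coinvariant, cf.\ \eqref{eqn:tensoralgebra}). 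By construction $g_0=\iota_2$, so $\Aut(X_A)(\id_{X_B}\times 0)(g)=e_{X_B}$, and $g$ is invertible by Lemma \ref{lem:invertibility}; hence $g\in T_e\Aut(X_A)(X_B)$ with $g_1=v$.

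Next I would check naturality in $X_B$. For $f:X_B\to X_{B^\prime}$, the map $T_e\Aut(X_A)(f)$ is the restriction of $\Aut(X_A)(f\times\id_D)$, which on the representing ${}^H\AAA_{\mathrm{fp}}$-morphisms is post-composition with $\id_{F_{\bol{0}}/(x^2)}\otimes f^\ast\otimes\id_A$ (using \eqref{eqn:exponentialobject}, \eqref{eqn:exponentialrepresentables} and Remark \ref{rem:functorperspectiveT}); under \eqref{eqn:gsplitting} this sends $g_1$ to $(f^\ast\otimes\id_A)\circ g_1$, which is exactly ${}^H\Der(A,-\sqcup A)(f)$ as defined in \eqref{eqn:HDer}. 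Thus \eqref{eqn:DerLieiso} assembles into an isomorphism of presheaves on ${}^H\SSS$.

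Finally, $T_e\Aut(X_A)$ is an object of ${}^H\GGG$, being the pullback \eqref{eqn:tangentfibre} of sheaves (limits in ${}^H\GGG$ are computed objectwise); since a presheaf isomorphic to a sheaf is a sheaf, ${}^H\Der(A,-\sqcup A)$ is a sheaf, and as ${}^H\GGG\hookrightarrow\mathrm{PSh}({}^H\SSS)$ is full the presheaf isomorphism \eqref{eqn:DerLieiso} is an isomorphism in ${}^H\GGG$. I do not expect a genuine obstacle here: the only mildly fiddly points are the harmless flip identifying morphisms $A\to B\sqcup F_{\bol{0}}/(x^2)\sqcup A$ with morphisms $A\to F_{\bol{0}}/(x^2)\sqcup B\sqcup A$ used to define $g_0,g_1$, and the naturality bookkeeping; the two facts that carry real weight — that $g_1$ is a braided derivation and that $g$ is automatically invertible — are already \eqref{eqn:Leibniz} and Lemma \ref{lem:invertibility}.
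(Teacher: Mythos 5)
Your proposal is correct and follows essentially the same route as the paper: injectivity from the fact that $g$ is determined by $(g_0,g_1)=(\iota_2,g_1)$, surjectivity by building $g$ from a given derivation $v$ and invoking Lemma \ref{lem:invertibility} for invertibility, naturality by unwinding the functorial action, and the sheaf/${}^H\GGG$ conclusion from the full embedding $\mathrm{Sh}({}^H\SSS)\to\mathrm{PSh}({}^H\SSS)$. You merely spell out details (well-definedness, multiplicativity of the reconstructed $g$, the naturality bookkeeping) that the paper leaves implicit.
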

\begin{proof}
Since $g$ is uniquely specified by $g_0,g_1$ (via \eqref{eqn:gsplitting})
and $g_0 = \iota_2$ for all $g\in T_e\Aut(X_A)(X_B)$, it follows that \eqref{eqn:DerLieiso}
is injective. Surjectivity of \eqref{eqn:DerLieiso} follows from Lemma~\ref{lem:invertibility}.
Naturality of \eqref{eqn:DerLieiso} is obvious. Because $T_e\Aut(X_A)$ and
${}^H\Der(A,-\sqcup A)$ are isomorphic as presheaves and $T_e\Aut(X_A)$ is
a sheaf, it follows from the fully faithful embedding
$\mathrm{Sh}({}^H\SSS)\to \mathrm{PSh}({}^H\SSS)$ 
that ${}^H\Der(A,-\sqcup A)$ is a sheaf and that the isomorphism is in ${}^H\GGG$.
\end{proof}

We conclude by showing that ${}^H\Der(A,-\sqcup A)$ (and hence by Corollary 
\ref{cor:DerAutIso} also $T_e\Aut(X_A)$) is a $\underline{K}$-module object in ${}^H\GGG$
that can be equipped with a Lie bracket. 
From the perspective used in Remark \ref{rem:ringfunctor}, 
this is equivalent to equipping ${}^H\Der(A,B\sqcup A)$ with a $B^{\bol{0}}$-module
structure and a Lie bracket on this $B^{\bol{0}}$-module,
such that both structures are natural transformations for 
${}^H\SSS$-morphisms $f : X_{B}\to X_{B^\prime}$.
Recall that ${}^H\Der(A,B\sqcup A)$ is the subset of 
$\Hom_{{}^H\MMM}(A,B\sqcup A)$ specified by the Leibniz rule 
\eqref{eqn:Leibniz}. 
Because the Leibniz rule is a linear condition, it follows that
${}^H\Der(A,B\sqcup A)$ is closed under taking sums and
additive inverses, and that it contains the zero map. From \eqref{eqn:HDer} one immediately
sees that this Abelian group structure is natural with respect to 
${}^H\SSS$-morphisms $f : X_{B}\to X_{B^\prime}$,
hence ${}^H\Der(A,-\sqcup A)$ is an Abelian group object in ${}^H\GGG$.
The $B^{\bol{0}}$-module structure
\begin{subequations}\label{eqn:B0ModHDer}
\begin{flalign}
B^{\bol{0}}\times {}^H\Der(A,B\sqcup A)\longrightarrow {}^H\Der(A,B\sqcup A)~,~~(b,v)\longmapsto b\cdot v
\end{flalign}
is defined by setting
\begin{flalign}
(b\cdot v) (a) := (b\otimes \1_A)~v(a)~,
\end{flalign}
\end{subequations}
for all $a\in A$. In order to verify that $(b\cdot v)\in {}^H\Der(A,B\sqcup A)$, i.e.\ that it is $H$-equivariant and
satisfies the Leibniz rule \eqref{eqn:Leibniz}, it is essential to use
the fact that $b$ is coinvariant, $\rho^B: b\mapsto \1_H\otimes b$.
From \eqref{eqn:HDer} one immediately sees that this $B^{\bol{0}}$-module structure
is natural with respect to ${}^H\SSS$-morphisms $f : X_{B}\to X_{B^\prime}$, i.e.\
\begin{flalign}
(f^\ast\otimes \id_{A})\circ (b^\prime\cdot v^\prime\, )
= f^\ast(b^\prime\, )\cdot \big((f^\ast\otimes \id_{A})\circ v^\prime\, \big)~,
\end{flalign}
for all $b^\prime\in {B^\prime}\,^{\bol{0}}$ and $v^\prime\in {}^H\Der(A,B^\prime\sqcup A)$.
This endows ${}^H\Der(A,-\sqcup A)$ with the structure of a $\underline{K}$-module object in ${}^H\GGG$.
\sk

It remains to define a Lie bracket
\begin{subequations}\label{eqn:Liebracket}
\begin{flalign}
[\,-\,,\,-\,]_{X_B}^{} \,:\, {}^H\Der(A,B\sqcup A)\otimes_{B^{\bol{0}}}{}^H\Der(A,B\sqcup A)\longrightarrow {}^H\Der(A,B\sqcup A)
\end{flalign}
on each $B^{\bol{0}}$-module ${}^H\Der(A,B\sqcup A)$.
Let us set
\begin{flalign}
[v,w]_{X_B}^{} := (\mu_B\otimes\id_A)\circ \big( (\id_B\otimes v)\circ w - 
 (\id_B\otimes w)\circ v\big)~,
\end{flalign}
\end{subequations}
for all $v,w\in {}^H\Der(A,B\sqcup A)$, where $\mu_B : B\otimes B\to B$ is the product on $B$.
Notice that $[v,w]_{X_B} : A\to B\sqcup A$ is an ${}^H\MMM$-morphism.
A straightforward but slightly lengthy computation (using the Leibniz rule \eqref{eqn:Leibniz} for $v$ and $w$) 
shows that $[v,w]_{X_B}$ satisfies the Leibniz rule, hence it is an element in  ${}^H\Der(A,B\sqcup A)$.
Antisymmetry of $[\,-\,,\,-\,]_{X_B}$ follows immediately from the definition and the Jacobi identity is 
shown by direct computation. Moreover, $B^{\bol{0}}$-linearity of the Lie bracket, i.e.\ 
\begin{flalign}
[b\cdot v,w]_{X_B} = b\cdot [v,w]_{X_B}= [v,b\cdot w]_{X_B}~,
\end{flalign}
for all $b\in B^{\bol{0}}$ and $v,w\in {}^H\Der(A,B\sqcup A)$,
can be easily verified by using the fact that $b$ is coinvariant and hence it commutes with any other element in $B$ 
(cf.\ \eqref{eqn:commutationrelations}).
Naturality of the Lie bracket with respect to ${}^H\SSS$-morphisms $f : X_{B}\to X_{B^\prime}$
is a simple consequence of the fact that $f^\ast : B^\prime \to B$ preserves the
products entering the definition in \eqref{eqn:Liebracket}. We have
thereby obtained
an explicit description of the Lie algebra of the automorphism group $\Aut(X_A)$.
\begin{propo}\label{propo:Liealgebra}
The functor ${}^H\Der(A,-\sqcup A)$ equipped with the structure morphisms introduced above is a Lie algebra object
in the category $\Mod_{\underline{K}}({}^H\GGG)$ of $\underline{K}$-module objects in ${}^H\GGG$. 
\end{propo}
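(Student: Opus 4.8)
The plan is to obtain the proposition by assembling facts established in the discussion preceding the statement and adding a short topos-theoretic observation. That discussion has already shown, for each object $X_B$ in ${}^H\SSS$, that $[v,w]_{X_B}$ is an ${}^H\MMM$-morphism $A\to B\sqcup A$ obeying the Leibniz rule \eqref{eqn:Leibniz}, hence an element of ${}^H\Der(A,B\sqcup A)$; that $[-,-]_{X_B}$ is antisymmetric, satisfies the Jacobi identity and is $B^{\bol{0}}$-bilinear; and that the $B^{\bol{0}}$-module structure and the bracket are natural in ${}^H\SSS$-morphisms $f : X_B\to X_{B^\prime}$, because $f^\ast : B^\prime\to B$ is an ${}^H\AAA_{\mathrm{fp}}$-morphism and so preserves $\mu_{B^\prime}$, the inclusion $\iota_2$ and coinvariants. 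Thus ${}^H\Der(A,-\sqcup A)$ is already a $\underline{K}$-module object in ${}^H\GGG$ equipped, stagewise, with a natural $B^{\bol{0}}$-bilinear Lie bracket.

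It then remains to recognise this stagewise structure as a Lie algebra object in the symmetric monoidal category $(\Mod_{\underline{K}}({}^H\GGG),\otimes_{\underline{K}})$. Here I would use that ${}^H\GGG = \mathrm{Sh}({}^H\SSS)$ is a Grothendieck topos and $\underline{K}$ a commutative ring object, so that the relative tensor product $\otimes_{\underline{K}}$ has its usual universal property: ${}^H\GGG$-morphisms out of ${}^H\Der(A,-\sqcup A)\otimes_{\underline{K}}{}^H\Der(A,-\sqcup A)$ correspond bijectively to $\underline{K}$-bilinear morphisms of the underlying objects in ${}^H\GGG$, and, since products and morphisms in ${}^H\GGG$ are computed stagewise, these are precisely natural families of $B^{\bol{0}}$-bilinear maps ${}^H\Der(A,B\sqcup A)\times {}^H\Der(A,B\sqcup A)\to {}^H\Der(A,B\sqcup A)$. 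The stagewise brackets $[-,-]_{X_B}$ constitute such a datum, hence define a bracket morphism $[-,-]$ in $\Mod_{\underline{K}}({}^H\GGG)$. Since two ${}^H\GGG$-morphisms agree whenever they agree at every stage, the stagewise antisymmetry and Jacobi identity promote to the corresponding equalities of ${}^H\GGG$-morphisms built from $[-,-]$, i.e.\ to the axioms of a Lie algebra object. Combined with Corollary \ref{cor:DerAutIso} this also exhibits $T_e\Aut(X_A)$ as the Lie algebra of $\Aut(X_A)$.

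The only genuine computations are the stagewise verifications that $[v,w]_{X_B}$ satisfies the Leibniz rule \eqref{eqn:Leibniz} and the Jacobi identity; I would carry these out by the familiar expansions for brackets of derivations, the single subtlety being that they take place in the braided-commutative algebra $B\sqcup A$, so one must track the order of the tensor factors and invoke the commutation relations \eqref{eqn:commutationrelations} when reordering. I expect this bookkeeping, rather than anything conceptual, to be the main obstacle; the passage from the stagewise data to a bona fide Lie algebra object in $\Mod_{\underline{K}}({}^H\GGG)$ is routine once one uses that ${}^H\GGG$ is a topos and that morphisms out of $\otimes_{\underline{K}}$ are classified by bilinear morphisms.
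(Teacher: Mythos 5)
Your proposal is correct and follows essentially the same route as the paper: the paper likewise verifies the Leibniz rule, antisymmetry, Jacobi identity, $B^{\bol{0}}$-bilinearity and naturality of $[\,-\,,\,-\,]_{X_B}$ stagewise, and then identifies this natural stagewise data with a Lie algebra object in $\Mod_{\underline{K}}({}^H\GGG)$ via the object-wise description of the tensor product $\otimes_{\underline{K}}$ (cf.\ Remark \ref{rem:notmonoidal}). Your extra remark about classifying morphisms out of $\otimes_{\underline{K}}$ by bilinear morphisms only makes explicit what the paper treats as immediate, so there is no substantive difference.
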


\section{\label{sec:comparison}Braided derivations}
The Lie algebra object ${}^H\Der(A,-\sqcup A)$ constructed 
in Proposition \ref{propo:Liealgebra} is (isomorphic to) the Lie algebra of 
the automorphism group $\Aut(X_A)$. Hence we may interpret it
as the Lie algebra of infinitesimal automorphisms of the toric noncommutative space
$X_A$ with function ${}^H\AAA_{\mathrm{fp}}$-algebra $A$.
Another (a priori unrelated) way to think about the infinitesimal automorphisms
of $X_A$ is to consider the Lie algebra $\der(A)$ of braided derivations of $A$, see
\cite{NCgrav,NClect,BSS1,BSS2,AS}. In this section we show that these two points of view
are equivalent. 
\sk

We briefly introduce the concept of braided derivations 
of ${}^H\AAA_{\mathrm{fp}}$-algebras $A$.
Let us first consider the case where $A= F_{\bol{m}_1,\dots,\bol{m}_{N}}$
is the free ${}^H\AAA$-algebra with $N$ generators
$x_i$ with left $H$-coaction $x_i\mapsto t_{\bol{m}_i}\otimes x_i$,
for $i=1,\dots,N$.
Let $\partial_j : F_{\bol{m}_1,\dots,\bol{m}_{N}} \to F_{\bol{m}_1,\dots,\bol{m}_{N}}$, for $j=1,\dots,N$,
be the linear map defined by
\begin{subequations}
\begin{flalign}
\partial_j(x_i) &= \delta_{ij}\, \1 ~,\\[4pt]
\partial_j(a\,a^\prime\, ) &= \partial_j(a)\,a^\prime + R(a_{(-1)}\otimes t_{-\bol{m}_j}) ~a_{(0)}
\,\partial_j(a^\prime\, )~,
\end{flalign}
\end{subequations}
for all $i=1,\dots,N$ and $a,a^\prime\in F_{\bol{m}_1,\dots,\bol{m}_{N}} $. The map $\partial_j$
should be interpreted as the `partial derivative' along the generator $x_j$, hence it is natural to
assign to it the left $H$-coaction $\partial_j \mapsto t_{-\bol{m}_j}\otimes \partial_j$.
It satisfies a braided generalization of the Leibniz rule that is controlled 
by the cotriangular structure $R$. Let us define the left $H$-comodule
\begin{flalign}
\der(F_{\bol{m}_1,\dots,\bol{m}_N}) := \coprod_{j=1,\dots,N} \, F_{\bol{m}_1,\dots,\bol{m}_N}[-\bol{m}_j]~,
\end{flalign}
with the coproduct taken in ${}^H\MMM$, where for an object $V$ in ${}^H\MMM$ we denote by $V[\bol{m}]$ the object
in ${}^H\MMM$  which has the same underlying vector space as $V$ but which is 
equipped with the shifted left $H$-coaction
\begin{flalign}
\rho^{V[\bol{m}]} : V\longrightarrow H\otimes V~,~~v\longmapsto v_{(-1)}\,t_{\bol{m}} \otimes v_{(0)}~.
\end{flalign}
We denote elements $L \in \der(F_{\bol{m}_1,\dots,\bol{m}_N})$ by $L=\sum_j\, L_j\,\partial_j$,
where $L_j\in F_{\bol{m}_1,\dots,\bol{m}_N}$, because the $H$-coaction then takes the convenient form
$\rho^{\der(F_{\bol{m}_1,\dots,\bol{m}_N})}(L) =\sum_j\, {L_j}_{(-1)}\,t_{-\bol{m}_j} \otimes {L_{j}}_{(0)} \,\partial_j$.
The evaluation of $\der(F_{\bol{m}_1,\dots,\bol{m}_N})$ on $F_{\bol{m}_1,\dots,\bol{m}_N}$ is given by the ${}^H\MMM$-morphism
\begin{flalign}\label{eqn:dereval}
\ev : \der(F_{\bol{m}_1,\dots,\bol{m}_N}) \otimes F_{\bol{m}_1,\dots,\bol{m}_N}\longrightarrow F_{\bol{m}_1,\dots,\bol{m}_N}~,~~L\otimes a \longmapsto \sum_{j}\, L_j\, \partial_j(a)~.
\end{flalign}
It is then easy to confirm the braided Leibniz rule
\begin{flalign}\label{eqn:braidedLeibniz}
\ev\big(L\otimes (a\,a^\prime\, )\big) = \ev(L\otimes a)\,a^\prime +
R(a_{(-1)}\otimes L_{(-1)})~a_{(0)} \,\ev(L_{(0)}\otimes a^\prime\, )~,
\end{flalign}
for all $L\in \der(F_{\bol{m}_1,\dots,\bol{m}_N})$ and $a,a^\prime\in F_{\bol{m}_1,\dots,\bol{m}_N}$,
which allows us to interpret elements of $\der(F_{\bol{m}_1,\dots,\bol{m}_N})$ as braided derivations.
\sk

For a finitely presented ${}^H\AAA$-algebra $A = F_{\bol{m}_1,\dots,\bol{m}_N}/(f_k)$,
the left $H$-comodule of braided derivations is defined by
\begin{flalign}\label{eqn:derf.g.}
\der(A) := \Big\{  L \in \mbox{$\coprod\limits_{j=1,\dots,N}$}\,
A[-\bol{m}_j]  \,:\, \mbox{$\sum\limits_j$}\, L_j\,\partial_j(f_k)=
0~~\forall k\, \Big\} \ \subseteq \ \coprod_{j=1,\dots,N}\, A[-\bol{m}_j] ~.
\end{flalign}
The evaluation ${}^H\MMM$-morphism is similar to that in the case of free ${}^H\AAA$-algebras
and is given by
\begin{flalign}
\ev : \der(A) \otimes A \longrightarrow A~,~~L\otimes a \longmapsto \sum_{j}\, L_j\, \partial_j(a)~.
\end{flalign}
Notice that $\ev$ is well-defined because of the 
conditions imposed in \eqref{eqn:derf.g.}. The braided Leibniz rule \eqref{eqn:braidedLeibniz}
also holds in the case of finitely presented ${}^H\AAA$-algebras.
\begin{propo}
Let $A$ be any object in ${}^H\AAA_{\mathrm{fp}}$. Then
$\der(A)$ is a Lie algebra object in ${}^H\MMM$ with Lie bracket ${}^H\MMM$-morphism
$[\,-\,,\,-\,] : \der(A)\otimes\der(A) \to \der(A)$ uniquely defined by
\begin{flalign}\label{eqn:Liebracketderimplicit}
\ev\big([L,L^\prime\, ]\otimes a\big) := \ev\big(L\otimes\ev(L^\prime\otimes a)\big) -
 R(L^\prime_{(-1)}\otimes L_{(-1)})~\ev\big(L^\prime_{(0)}\otimes\ev(L_{(0)}\otimes a)\big) ~,
\end{flalign}
for all $L,L^\prime\in\der(A)$ and $a\in A$.
\end{propo}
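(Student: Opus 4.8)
The plan is to establish this in two stages: first prove the statement for free ${}^H\AAA$-algebras $A=F_{\bol{m}_1,\dots,\bol{m}_N}$, where everything can be computed explicitly, and then transfer the result to a general finitely presented ${}^H\AAA$-algebra $A=F_{\bol{m}_1,\dots,\bol{m}_N}/(f_k)$ via the inclusion $\der(A)\subseteq \coprod_j A[-\bol{m}_j]$. The main work is to check that the formula \eqref{eqn:Liebracketderimplicit} actually \emph{defines} an element of $\der(A)$ (well-definedness), that the assignment $L\otimes L^\prime\mapsto[L,L^\prime]$ is an ${}^H\MMM$-morphism ($H$-equivariance), and that it satisfies antisymmetry and the Jacobi identity in the braided sense.

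\medskip

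\textbf{Step 1: well-definedness of the bracket.} First I would argue that the right-hand side of \eqref{eqn:Liebracketderimplicit}, viewed as a linear map $A\to A$ in $a$, is itself a braided derivation, i.e.\ satisfies the braided Leibniz rule \eqref{eqn:braidedLeibniz} with the appropriate $H$-coaction label. Writing $D_{L}(a):=\ev(L\otimes a)$, one computes $D_L\big(D_{L^\prime}(a\,a^\prime)\big)$ by applying \eqref{eqn:braidedLeibniz} twice — once for $L^\prime$ acting on $a\,a^\prime$, then for $L$ acting on the resulting product — and similarly for the braided-transposed term; the cotriangularity relations \eqref{eqn:Rmatrixproperties} together with quasi-cocommutativity are exactly what is needed to cancel the ``second-order'' terms $D_L D_{L^\prime}(a)\cdot(\cdots) + (\cdots)\cdot D_L D_{L^\prime}(a^\prime)$ up to the braided transposition, leaving precisely the braided Leibniz rule for $[L,L^\prime]$. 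Since any braided derivation on $A=F_{\bol{m}_1,\dots,\bol{m}_N}/(f_k)$ is determined by its values on the generators $x_i$ and must annihilate the relations $f_k$ in the evaluation pairing, and since $\der(A)$ is carved out of $\coprod_j A[-\bol{m}_j]$ by exactly this condition (cf.\ \eqref{eqn:derf.g.}), it follows that $[L,L^\prime]$ lies in $\der(A)$; moreover uniqueness is automatic because $\ev$ is a faithful pairing in the relevant sense (a braided derivation is recovered from its evaluation on all of $A$, in particular on generators).

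\medskip

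\textbf{Step 2: $H$-equivariance, antisymmetry, Jacobi.} For $H$-equivariance one applies the coaction to both sides of \eqref{eqn:Liebracketderimplicit} and uses that $\ev$ is an ${}^H\MMM$-morphism together with the coassociativity and cocommutativity of $\Delta$ and the defining properties \eqref{eqn:Rmatrixproperties} of $R$; the Sweedler bookkeeping is routine once one observes that $R$ intertwines the relevant coactions. Antisymmetry in the braided sense — i.e.\ $[L,L^\prime] = -R(L^\prime_{(-1)}\otimes L_{(-1)})\,[L^\prime_{(0)},L_{(0)}]$ — is immediate from the manifest structure of \eqref{eqn:Liebracketderimplicit}: swapping $L\leftrightarrow L^\prime$ reverses the two terms and inserts the appropriate $R$-factor, using $R(f\otimes g)\,R(g\otimes f)=\epsilon(f)\epsilon(g)$ from \eqref{eqn:Rmatrixproperties}. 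The braided Jacobi identity is proved by expanding all three cyclic permutations of $\ev\big([[L,L^\prime],L^{\prime\prime}]\otimes a\big)$ using \eqref{eqn:Liebracketderimplicit}; each term becomes a triple composition $\ev(L^{(i)}_{(0)}\otimes \ev(L^{(j)}_{(0)}\otimes\ev(L^{(k)}_{(0)}\otimes a)))$ weighted by a product of $R$-factors, and the six such terms cancel in pairs exactly as in the classical Jacobi identity, with the cotriangular hexagon-type identities \eqref{eqn:Rmatrixproperties} ensuring the $R$-weights match up.

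\medskip

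\textbf{The main obstacle} I expect is bookkeeping: all three verifications (Leibniz for the bracket, $H$-equivariance, Jacobi) are ``straightforward but slightly lengthy'' Sweedler-notation computations, and the genuine content is isolating precisely which instances of \eqref{eqn:Rmatrixproperties} (and of the cocommutativity of $H$, which makes the quasi-triangular structure symmetric) are invoked at each cancellation. There is no conceptual difficulty beyond keeping the $R$-factors and coaction legs correctly ordered; the key structural input is that the symmetric braiding $\tau$ from \eqref{eqn:tauflip} is what governs both the commutation relations \eqref{eqn:commutationrelations} in $A$ and the ``braided transposition'' appearing in \eqref{eqn:Liebracketderimplicit}, so the computation is really the transport of the classical proof through the symmetric monoidal category ${}^H\MMM$.
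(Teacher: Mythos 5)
Your proposal is correct and follows essentially the same route as the paper: the paper's proof likewise expands the right-hand side of \eqref{eqn:Liebracketderimplicit} by a double application of the braided Leibniz rule \eqref{eqn:braidedLeibniz}, observes that the second-order terms cancel, and reads off the explicit first-order expression $\ev([L,L^\prime\,]\otimes a)=\sum_{j,k}\big(L_j\,\partial_j(L^\prime_k)-R(\cdots)\,{L^\prime_j}_{(0)}\,\partial_j({L_k}_{(0)})\big)\,\partial_k(a)$, which delivers well-definedness, membership in $\der(A)$ and uniqueness in one stroke. The only difference is packaging: where you argue abstractly that the resulting operator is a braided derivation and hence lies in $\der(A)$, the paper extracts the components $[L,L^\prime\,]_k$ explicitly; braided antisymmetry and the Jacobi identity are left as straightforward computations in both.
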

\begin{proof}
Using the braided Leibniz rule \eqref{eqn:braidedLeibniz}, we can compute the right-hand side of \eqref{eqn:Liebracketderimplicit}
and obtain
\begin{flalign}
 \resizebox{0.91\textwidth}{!}{$
\ev\big([L,L^\prime\, ]\otimes a\big)  =
\sum\limits_{j,k} \, \big( L_j\,\partial_j(L^\prime_k) -R({L^\prime_j}_{(-1)}\,t_{-\bol{m}_j}\otimes {L_k}_{(-1)}\,\,t_{-\bol{m}_k})~{L^\prime_j}_{(0)}\,\partial_j({L_k}_{(0)})\big)\, \partial_k(a)~.$}
\end{flalign}
Hence $[L,L^\prime\,]$ 
is uniquely defined and it is a braided derivation. The braided antisymmetry and 
Jacobi identity on $[\,-\,,\,-\,]$ can be verified by a straightforward computation.
\end{proof}

We would now like to compare the braided derivations $\der(A)$ with the 
automorphism Lie algebra ${}^H\Der(A,-\sqcup A)$ constructed 
in Section \ref{sec:liealgebras}. There is however a problem:
While $\der(A)$ is a Lie algebra object in the category ${}^H\MMM$ of left $H$-comodules,
${}^H\Der(A,-\sqcup A)$ is a Lie algebra object in the category $\Mod_{\underline{K}}({}^H\GGG)$ 
of $\underline{K}$-module objects in the category ${}^H\GGG$ 
of generalized toric noncommutative spaces. We show in Appendix \ref{app:technical} that
there exists a functor $j : {}^H\MMM \to \Mod_{\underline{K}}({}^H\GGG)$,
which becomes a fully faithful embedding when restricted to the full subcategory
${}^H\MMM_{\mathrm{dec}}$ of decomposable left $H$-comodules 
(cf.\ Definition \ref{def:decomposable}). Because $\der(A)$ is a decomposable left $H$-comodule
(cf.\ Corollary \ref{cor:derisdecomposable}), we may use the fully faithful embedding
$j : {}^H\MMM_{\mathrm{dec}} \to \Mod_{\underline{K}}({}^H\GGG)$ to relate
$\der(A)$ to ${}^H\Der(A,-\sqcup A)$.
\sk

Let us characterize more explicitly the object $j(\der(A))$ in $\Mod_{\underline{K}}({}^H\GGG)$.
Its underlying functor (cf.\ \eqref{eqn:jfunctor}) 
assigns to an object $X_B$ in ${}^H\SSS$ the $B^\bol{0}$-module
\begin{subequations}
\begin{flalign}
j(\der(A))(X_B) = \big(B\otimes \der(A)\big)^{\bol{0}}
\end{flalign}
and to an ${}^H\SSS$-morphism $f : X_B\to X_C$ the module morphism
\begin{flalign}
j(\der(A))(f) = (f^\ast\otimes \id_{\der(A)}) \, :\,  \big(C\otimes \der(A)\big)^{\bol{0}}\longrightarrow \big(B\otimes \der(A)\big)^{\bol{0}}~.
\end{flalign}
\end{subequations}
For any object $X_B$ in ${}^H\SSS$ we define a map
\begin{subequations}\label{eqn:zetamapping}
\begin{flalign}
\xi_{X_B} : \big(B\otimes \der(A)\big)^{\bol{0}} \longrightarrow {}^H\Der(A,B\sqcup A)~,~~
b\otimes L \longmapsto \xi_{X_B} (b\otimes L)
\end{flalign}
by setting
\begin{flalign}
\xi_{X_B}(b\otimes L)(a) := b\otimes \ev(L\otimes a) =  b\otimes \Big(\sum_j\, L_j\,\partial_j(a)\Big)~,
\end{flalign}
\end{subequations}
for all $a\in A$. It is easy to check that $\xi_{X_B}(b\otimes L) : A\to B\sqcup A$
is an ${}^H\MMM$-morphism by using the property that $b\otimes L$ is $H$-coinvariant. 
Moreover, $\xi_{X_B}(b\otimes L)$ satisfies the Leibniz rule \eqref{eqn:Leibniz}
because $L$ satisfies the braided Leibniz rule \eqref{eqn:braidedLeibniz} and $b\otimes L$ is $H$-coinvariant.
Explicitly we have
\begin{flalign}
\nn \xi_{X_B}(b\otimes L)(a\,a^\prime\, ) &= 
b\otimes\left(\ev(L\otimes a)\,a^\prime + R(a_{(-1)}\otimes
  L_{(-1)})\, a_{(0)}\, \ev(L_{(0)}\otimes a^\prime\, )\right)\\[4pt]
&=\xi_{X_B}(b\otimes L)(a)\,(\1_B\otimes a^\prime\, ) + (\1_B\otimes
a)\, \xi_{X_B}(b\otimes L)(a^\prime\, )~,
\end{flalign}
where the last step follows from \eqref{eqn:tensoralgebra} and \eqref{eqn:Rmatrixproperties}.
This shows that the image of $\xi_{X_B}$ lies in ${}^H\Der(A,B\sqcup A)$, 
as we have asserted in \eqref{eqn:zetamapping}.
\sk

The maps $\xi_{X_B}$ are clearly $B^{\bol{0}}$-module morphisms with respect to the $B^\bol{0}$-module
structure on ${}^H\Der(A,B\sqcup A)$ introduced in \eqref{eqn:B0ModHDer} and that on $j(\der(A))(X_B)$ 
introduced in \eqref{eqn:B0ModjV}, and they are natural with respect to ${}^H\SSS$-morphisms
$f : X_B\to X_C$. Hence we have defined a morphism
\begin{flalign}\label{eqn:zetanattrafo}
\xi :  j(\der(A)) \longrightarrow {}^H\Der(A,-\sqcup A)~
\end{flalign}
in the category $\Mod_{\underline{K}}({}^H\GGG)$ of $\underline{K}$-module objects in ${}^H\GGG$.
The main result of this section is
\begin{theo}\label{theo:dervsHDer}
The $\Mod_{\underline{K}}({}^H\GGG)$-morphism \eqref{eqn:zetanattrafo} is an isomorphism. Hence
$\der(A)$ and ${}^H\Der(A,-\sqcup A)$ are equivalent descriptions of the infinitesimal automorphisms
of a toric noncommutative space $X_A$.
\end{theo}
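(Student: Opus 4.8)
The plan is to show that the $\Mod_{\underline{K}}({}^H\GGG)$-morphism $\xi$ is an isomorphism by verifying that each of its components
\[
\xi_{X_B}\,:\,\big(B\otimes\der(A)\big)^{\bol 0}\longrightarrow {}^H\Der(A,B\sqcup A)
\]
is a bijection of sets; since $\xi$ is already known to be a morphism in $\Mod_{\underline{K}}({}^H\GGG)$, a componentwise bijective natural transformation is automatically an isomorphism there. The useful elementary observation is that, writing a generic element of $(B\otimes\der(A))^{\bol 0}$ as $\sum_i u_i\,\partial_i$ with $u_i\in B\otimes A$ of $H$-weight $\bol m_i$ (this is exactly coinvariance of $\sum_i u_i\,\partial_i$, since $\partial_i$ carries weight $-\bol m_i$), one has $\xi_{X_B}\big(\sum_i u_i\,\partial_i\big)(x_l)=\sum_i u_i\,\partial_i(x_l)=u_l$, because $\partial_i(x_l)=\delta_{il}\,\1$ in $\der(A)$. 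Injectivity of $\xi_{X_B}$ is then immediate: an element of $(B\otimes\der(A))^{\bol 0}\subseteq\coprod_i(B\otimes A)[-\bol m_i]$ is determined by its components $u_i$, and these are recovered by evaluating $\xi_{X_B}\big(\sum_i u_i\,\partial_i\big)$ on the generators $x_i$ of $A$.

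For surjectivity, let $v\in{}^H\Der(A,B\sqcup A)$ and write $A=F_{\bol m_1,\dots,\bol m_N}/(f_k)$. Put $w_i:=v(x_i)\in B\sqcup A$; $H$-equivariance of $v$ together with $\rho^A(x_i)=t_{\bol m_i}\otimes x_i$ forces $w_i$ to have weight $\bol m_i$. The candidate preimage is $L^{(v)}:=\sum_i w_i\,\partial_i$, and two points must be checked: (a) that $L^{(v)}$ really lies in $B\otimes\der(A)$, i.e.\ that it satisfies the defining relations $\sum_i w_i\,\partial_i(f_k)=0$ of $\der(A)$ (the $\partial_i(f_k)$ read in $A$); and (b) that $\xi_{X_B}(L^{(v)})=v$. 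Granting (a), part (b) is immediate: $\xi_{X_B}(L^{(v)})$ is a derivation $A\to B\sqcup A$ whose value on each $x_l$ is $w_l=v(x_l)$, and since any derivation of $A$ into $B\sqcup A$ is determined by its values on the generators (pull back along $F_{\bol m_1,\dots,\bol m_N}\twoheadrightarrow A$ and iterate the Leibniz rule \eqref{eqn:Leibniz}), it follows that $\xi_{X_B}(L^{(v)})=v$.

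The heart of the argument is therefore part (a). I would consider the linear map
\[
D\,:\,F_{\bol m_1,\dots,\bol m_N}\longrightarrow B\sqcup A~,\qquad D(f):=\sum_i w_i\,\big(\1_B\otimes\overline{\partial_i(f)}\big)~,
\]
where $\overline{(-)}$ denotes the image in $A$. Using the braided Leibniz rule \eqref{eqn:braidedLeibniz} for the $\partial_i$, the commutation relations \eqref{eqn:commutationrelations} in the commutative algebra object $B\sqcup A$, and the cotriangularity identity $R(f\otimes Sg)=R(g\otimes f)$ (a consequence of \eqref{eqn:Rmatrixproperties} and the explicit antipode), one checks that $D$ is a derivation for the $F_{\bol m_1,\dots,\bol m_N}$-bimodule structure on $B\sqcup A$ pulled back along $F_{\bol m_1,\dots,\bol m_N}\twoheadrightarrow A\xrightarrow{\iota_2}B\sqcup A$; this is precisely the computation concealed in the $H$-weight bookkeeping and is the step I expect to be the main obstacle. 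On the other hand, $v\circ q$, with $q:F_{\bol m_1,\dots,\bol m_N}\to A$ the quotient map, is another such derivation, and $D(x_i)=w_i=(v\circ q)(x_i)$. Since a derivation out of the free algebra is determined by its values on the generators, $D=v\circ q$; evaluating on $f_k\in\ker q$ gives $D(f_k)=v(q(f_k))=0$, which is exactly relation (a). This establishes surjectivity, so each $\xi_{X_B}$ is bijective and $\xi$ is an isomorphism in $\Mod_{\underline{K}}({}^H\GGG)$; the final assertion of the theorem then follows by combining this with Corollary \ref{cor:DerAutIso} and Proposition \ref{propo:Liealgebra}.
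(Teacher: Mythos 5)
Your proof is correct and follows essentially the same route as the paper: your map $D(f)=\sum_i w_i\,(\1_B\otimes\overline{\partial_i(f)})$ is exactly the paper's $\sum_j w_j\,\widehat{\partial}_j$, and your generator-evaluation argument for injectivity/surjectivity is the content of the paper's isomorphism \eqref{eqn:conditionHDer} combined with its decomposability bookkeeping identifying $\coprod_j(B\otimes A[-\bol{m}_j])^{\bol{0}}$ with $(B\otimes\coprod_j A[-\bol{m}_j])^{\bol{0}}$. In fact you supply the $D=v\circ q$ justification for \eqref{eqn:conditionHDer} that the paper leaves implicit; the only point you gloss over (harmlessly, since $\bbK$ is a field) is that $B\otimes\ker\phi=\ker(\id_B\otimes\phi)$, needed to pass from the relation $\sum_i w_i\,\partial_i(f_k)=0$ to membership in $B\otimes\der(A)$.
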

\begin{proof}
Fix a presentation $A = F_{\bol{m}_1,\dots,\bol{m}_N}/(f_k)$ of $A$ 
and any object $X_B$ in ${}^H\SSS$.
We define {\em non-equivariant} linear maps $\widehat{\partial}_j : 
F_{\bol{m}_1,\dots,\bol{m}_N} \to B\otimes F_{\bol{m}_1,\dots,\bol{m}_N}$
by setting
\begin{subequations}
\begin{flalign}
\widehat{\partial}_j(x_i) &=\delta_{ij}~ \1_B\otimes \1_{F_{\bol{m}_1,\dots,\bol{m}_N}}~,\\[4pt]
\widehat{\partial}_j(a\,a^\prime\, ) &= \widehat{\partial}_j(a)\, (\1_B\otimes a) + 
R(a_{(-1)}\otimes t_{-\bol{m}_j})~(\1_B\otimes a_{(0)}) \,
\widehat{\partial}_j(a^\prime\, )~,
\end{flalign}
\end{subequations}
for all generators $x_i$ and all $a,a^\prime \in F_{\bol{m}_1,\dots,\bol{m}_N}$.
There is an isomorphism
\begin{flalign}\label{eqn:conditionHDer}
\Big\{v\in \mbox{$\coprod\limits_{j=1,\dots,N}\, (B\otimes A[-\bol{m}_j])^{\bol{0}} \,:\, 
\sum\limits_j\, v_j \, \widehat{\partial}_j(f_k) =0~~\forall k$}\,\Big\} \simeq
{}^H\Der(A,B\sqcup A) 
\end{flalign}
given by the assignment $v \mapsto \sum_j\,
v_j\,\widehat{\partial}_j$. Because $A$ and $B$ are decomposable,
we obtain a chain of isomorphisms
\begin{flalign}
\nn \coprod_{j=1,\dots,N} (B\otimes A[-\bol{m}_j])^{\bol{0}}&\simeq
\coprod_{j=1,\dots,N} \ \coprod_{\bol{n}\in\bbZ^n} \,
\big(B^{\bol{n}}\otimes A[-\bol{m}_j]^{-\bol{n}}\big) \\[4pt]
\nn &\simeq  \coprod_{\bol{n}\in\bbZ^n} \, \Big(B^{\bol{n}}\otimes
\Big(\coprod_{j=1,\dots,N} \, A[-\bol{m}_j]\Big)^{-\bol{n}}\Big)
\\[4pt] &\simeq
\Big(B\otimes \coprod_{j=1,\dots,N}\, A[-\bol{m}_j]\Big)^{\bol{0}}~.
\end{flalign}
The resulting isomorphism preserves the conditions imposed in \eqref{eqn:conditionHDer} and \eqref{eqn:derf.g.},
hence it induces an isomorphism between ${}^H\Der(A,B\sqcup A)$ and $j(\der(A))(X_B)$.
\end{proof}
\begin{rem}
Even though the functor $j : {}^H\MMM\to \Mod_{\underline{K}}({}^H\GGG)$
is not monoidal (cf.\ Remark~\ref{rem:notmonoidal}),
there exists a $\Mod_{\underline{K}}({}^H\GGG)$-morphism
\begin{flalign}
\psi : j(\der(A))\otimes j(\der(A)) \longrightarrow j(\der(A)\otimes\der(A))~,
\end{flalign}
which is described  explicitly in \eqref{eqn:psimap}. We now confirm that the isomorphism
$\xi: j(\der(A)) \to {}^H\Der(A,-\sqcup A)$ established in Theorem \ref{theo:dervsHDer} preserves the Lie brackets
on $\der(A)$ and ${}^H\Der(A,-\sqcup A)$ in the sense that the diagram
\begin{flalign}
\xymatrix{
\ar[dd]_-{\xi\otimes \xi} j(\der(A))\otimes j(\der(A)) \ar[rr]^-{\psi} &&  j(\der(A)\otimes\der(A))\ar[d]^-{j([\,-\,,\,-\,])}\\
&&  j(\der(A))\ar[d]^-{\xi}\\
{}^H\Der(A,-\sqcup A)\otimes {}^H\Der(A,-\sqcup A) \ar[rr]_-{[\,-\,,\,-\,]} && {}^H\Der(A,-\sqcup A)
}
\end{flalign}
in $\Mod_{\underline{K}}({}^H\GGG)$ commutes. Fixing an arbitrary object $X_B\in {}^H\SSS$ and
going along the upper path of this diagram we obtain
\begin{subequations}\label{eqn:Liecomparison}
\begin{flalign}
\nn &\left(\xi_{X_B}\circ (\id_{B}\otimes [\,-\,,\,-\,])\circ
  \psi_{X_B}\big((b\otimes L) \otimes_{B^{\bol{0}}} (b^\prime\otimes
  L^\prime\, )\big)\right)(a)\\
\nn &\qquad ~\qquad =R(b^\prime_{(-1)}\otimes L_{(-1)})
~b\,b^\prime_{(0)}\otimes \ev\big([L_{(0)},L^\prime\, ]\otimes a\big)\\[4pt]
\nn &\qquad ~\qquad  = R(b^\prime_{{(-1)}_{(1)}}\otimes L_{(-1)})
~R(b^\prime_{{(-1)}_{(2)}}\otimes b_{(-1)})~b^\prime_{(0)}\,b_{(0)}
\otimes \ev\big([L_{(0)},L^\prime\, ]\otimes a\big)\\[4pt]
\nn &\qquad ~\qquad  = R(b^\prime_{{(-1)}}\otimes b_{(-1)}\, L_{(-1)})
~b^\prime_{(0)}\,b_{(0)} \otimes \ev\big([L_{(0)},L^\prime\, ]\otimes a\big)\\[4pt]
&\qquad ~\qquad = b^\prime\,b\otimes \ev\big([L,L^\prime\, ]\otimes a\big)~,\label{eqn:Liecomparisonupper}
\end{flalign}
for all $a\in A$, where in the last two steps we used the properties \eqref{eqn:Rmatrixproperties} of the cotriangular structure $R$
and the fact that $b\otimes L \in (B\otimes\der(A))^{\bol{0}}$ is
coinvariant. Going now along the lower path of the diagram
we obtain
\begin{flalign}\label{eqn:Liecomparisonlower}
\big[\xi_{X_B}(b\otimes L), \xi_{X_B}(b^\prime\otimes L^\prime\, )\big]_{X_B}(a)= 
b^\prime \,b\otimes \ev\big(L\otimes \ev(L^\prime\otimes a)\big) - b\,b^\prime \otimes 
\ev\big(L^\prime\otimes \ev(L\otimes a)\big)~,
\end{flalign}
\end{subequations}
for all $a\in A$, where we used the definition of the Lie bracket $ [\,-\,,\,-\,]_{X_B} $ given in \eqref{eqn:Liebracket}.
These two expressions coincide because, using without loss of generality $b\otimes L\in B^{\bol{m}}\otimes \der(A)^{-\bol{m}}$
and $b^\prime\otimes L^\prime\in B^{\bol{m}^\prime}\otimes \der(A)^{-\bol{m}^\prime}$,
the second term in  \eqref{eqn:Liecomparisonlower} can be rearranged as
\begin{flalign}
\nn b\,b^\prime \otimes \ev\big(L^\prime\otimes \ev(L\otimes a)\big) &= R(b^\prime_{(-1)}\otimes b_{(-1)})~ b^{\prime}_{(0)}\,b_{(0)} \otimes \ev\big(L^\prime\otimes \ev(L\otimes a)\big)\\[4pt]
\nn &= R(t_{\bol{m}^\prime}\otimes t_{\bol{m}})~ b^{\prime}\,b \otimes \ev\big(L^\prime\otimes \ev(L\otimes a)\big)\\[4pt]
\nn &= R(t_{-\bol{m}^\prime}\otimes t_{-\bol{m}})~ b^{\prime}\,b \otimes \ev\big(L^\prime\otimes \ev(L\otimes a)\big)\\[4pt]
  &= R(L_{(-1)}^\prime \otimes L_{(-1)})~ b^{\prime}\,b \otimes \ev\big(L_{(0)}^\prime\otimes \ev(L_{(0)}\otimes a)\big)~,
\end{flalign}
for all $a\in A$, where in the third step we used the property $R(h\otimes g) = R(S(h)\otimes S(g))$,  for all $h,g\in H$,
see e.g.\ \cite[Lemma 2.2.2]{Majidbook}.
\end{rem}

\section*{Acknowledgements}
We thank Marco Benini, Giovanni Landi and Ryszard Nest
for helpful comments on the material presented in this paper. This work was completed while R.J.S.\ was
visiting the Centro de Matem\'atica, Computa\c{c}\~{a}o e
Cogni\c{c}\~{a}o of the Universidade de Federal do ABC in S\~ao Paulo,
Brazil during June--July 2016, whom he warmly thanks for support and hospitality during his stay there.
This work was supported by the Science and Technology Facilities Council [grant number ST/L000334/1],
the work of R.J.S.\ is supported in part by a Consolidated Grant from the UK STFC.
This work was also supported in part by the Action MP1405 QSPACE from 
the European Cooperation in Science and Technology (COST). 
G.E.B.\ is a Commonwealth Scholar, funded by the UK government. 
The work of A.S.\ was supported by a Research Fellowship of the Deutsche Forschungsgemeinschaft (DFG, Germany). 
The work of R.J.S.\ was supported in part by the Visiting Researcher Program
Grant 2016/04341-5 from the Funda\c{c}\~{a}o de Amparo \'a Pesquisa do
Estado de S\~ao Paulo (FAPESP, Brazil).

\appendix

\section{\label{app:technical}Technical details for Section
  \ref{sec:comparison}}

\subsection{Decomposable objects in ${}^H\MMM$}
Given any object $V$ in ${}^H\MMM$, we define
\begin{flalign}
V^{\bol{m}} := \big\{ v\in V \,:\, \rho^V(v) = t_{\bol{m}}\otimes v\big\}~,
\end{flalign}
for all $\bol{m}\in\bbZ^n$. Notice that $V^{\bol{0}}$ is the vector space of coinvariants
and that $V^{\bol{m}} \subseteq V$ are ${}^H\MMM$-subobjects, for all $\bol{m}$.
\begin{defi}\label{def:decomposable}
 An object $V$ in ${}^H\MMM$ is decomposable if the canonical ${}^H\MMM$-morphism
 \begin{flalign}
 \coprod_{\bol{m}\in\bbZ^n}\, V^{\bol{m}} \longrightarrow V~,~~\coprod_{\bol{m}}\, v_{\bol{m}} \longmapsto \sum_{\bol{m}}\, v_{\bol{m}}
 \end{flalign}
 is an isomorphism. We denote by ${}^H\MMM_{\mathrm{dec}}$ the full subcategory
of decomposables.
\end{defi}
\begin{lem}[Properties of decomposables]\label{lem:decomposableproperties}~
\begin{itemize}
\item[a)] Tensor products of decomposables are decomposable, i.e.\ 
${}^H\MMM_{\mathrm{dec}}$ is a  monoidal subcategory of ${}^H\MMM$.
\item[b)] Coproducts of decomposables are decomposable.
\item[c)] ${}^H\MMM$-subobjects of decomposables are decomposable.
\end{itemize}
\end{lem}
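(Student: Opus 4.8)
The plan is to prove the three items by exploiting the fact that every relevant construction in ${}^H\MMM$ is controlled pointwise by the $\bbZ^n$-grading coming from the one-dimensional comodules $\bbK_{\bol{m}}$. The key structural observation is that the subspaces $V^{\bol{m}}\subseteq V$ are precisely the isotypical components for the simple objects of ${}^H\MMM$, so all three statements are really statements about how tensor products, coproducts and subobjects interact with isotypical decompositions.

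\textbf{a)} For two decomposables $V$ and $W$, I would first compute the homogeneous components of the tensor product. Using the coaction \eqref{eqn:tensorcoaction}, if $v\in V^{\bol{m}}$ and $w\in W^{\bol{m^\prime}}$ then $\rho^{V\otimes W}(v\otimes w) = t_{\bol{m}}\,t_{\bol{m^\prime}}\otimes v\otimes w = t_{\bol{m}+\bol{m^\prime}}\otimes v\otimes w$, so $V^{\bol{m}}\otimes W^{\bol{m^\prime}} \subseteq (V\otimes W)^{\bol{m}+\bol{m^\prime}}$. Since $V\simeq \coprod_{\bol{m}} V^{\bol{m}}$ and $W\simeq \coprod_{\bol{m^\prime}} W^{\bol{m^\prime}}$ as vector spaces and $\otimes$ distributes over direct sums, one gets $V\otimes W\simeq \coprod_{\bol{n}}\big(\coprod_{\bol{m}+\bol{m^\prime}=\bol{n}} V^{\bol{m}}\otimes W^{\bol{m^\prime}}\big)$, and the inner sum lands in $(V\otimes W)^{\bol{n}}$. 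The reverse inclusion $(V\otimes W)^{\bol{n}}\subseteq \coprod_{\bol{m}+\bol{m^\prime}=\bol{n}} V^{\bol{m}}\otimes W^{\bol{m^\prime}}$ follows because an element of $(V\otimes W)^{\bol{n}}$, written in a basis adapted to the decompositions of $V$ and $W$, must have all its non-homogeneous-degree-$\bol{n}$ components vanish. Hence $(V\otimes W)^{\bol{n}} = \coprod_{\bol{m}+\bol{m^\prime}=\bol{n}} V^{\bol{m}}\otimes W^{\bol{m^\prime}}$ and summing over $\bol{n}$ recovers all of $V\otimes W$, proving decomposability. That $\bbK = \bbK_{\bol{0}}$ is (trivially) decomposable handles the monoidal unit.

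\textbf{b)} For a coproduct $V\sqcup W$, whose underlying comodule is $V\oplus W$ with the componentwise coaction (as in the discussion of $\bbK_{\bol{m}}\sqcup\bbK_{\bol{m^\prime}}$ in Section \ref{sec:algebras}), one has $(V\sqcup W)^{\bol{m}} = V^{\bol{m}}\oplus W^{\bol{m}}$ directly from the definition, since the coaction does not mix the two summands. Then $\coprod_{\bol{m}}(V\sqcup W)^{\bol{m}} = \coprod_{\bol{m}}(V^{\bol{m}}\oplus W^{\bol{m}}) \simeq \big(\coprod_{\bol{m}}V^{\bol{m}}\big)\oplus\big(\coprod_{\bol{m}}W^{\bol{m}}\big)\simeq V\oplus W$, using decomposability of $V$ and $W$. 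The same argument works verbatim for arbitrary (not just binary) coproducts, since finite or infinite direct sums of vector spaces commute with the operation $V\mapsto\coprod_{\bol{m}}V^{\bol{m}}$.

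\textbf{c)} Let $U\subseteq V$ be an ${}^H\MMM$-subobject with $V$ decomposable. The point is that $U^{\bol{m}} = U\cap V^{\bol{m}}$ and that the decomposition of $V$ restricts to one of $U$. I would argue as follows: take $u\in U$ and write $u = \sum_{\bol{m}} v_{\bol{m}}$ with $v_{\bol{m}}\in V^{\bol{m}}$, the sum finite. The projection $V\to V^{\bol{m}}$ onto each isotypical component is an ${}^H\MMM$-morphism (it is the composite of $\rho^V$, evaluation of the $H$-factor against the functional dual to $t_{\bol{m}}$, and the inclusion), hence any $H$-comodule map; since $U$ is a subcomodule it is preserved by... actually the cleaner route: because $U$ is $\rho^V$-stable, applying $\rho^V$ to $u$ and comparing homogeneous degrees shows $\rho^V(u) = \sum_{\bol{m}} t_{\bol{m}}\otimes v_{\bol{m}}$ lies in $H\otimes U$, and since the $t_{\bol{m}}$ are linearly independent in $H$ this forces each $v_{\bol{m}}\in U$, i.e.\ $v_{\bol{m}}\in U\cap V^{\bol{m}} = U^{\bol{m}}$. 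Therefore $u\in \coprod_{\bol{m}} U^{\bol{m}}$, and conversely $\coprod_{\bol{m}} U^{\bol{m}}\to U$ is clearly injective because it already is after composing with $U\hookrightarrow V$. Hence $U\simeq\coprod_{\bol{m}} U^{\bol{m}}$, so $U$ is decomposable.

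\textbf{Main obstacle.} The only genuinely delicate point is part~(a): establishing the \emph{reverse} inclusion $(V\otimes W)^{\bol{n}}\subseteq \coprod_{\bol{m}+\bol{m^\prime}=\bol{n}} V^{\bol{m}}\otimes W^{\bol{m^\prime}}$ cleanly, i.e.\ verifying that no homogeneous element of the tensor product can have "mixed" support, which requires using that the $\{t_{\bol{m}}\}$ form a basis of $H$ so that the $\bbZ^n$-grading on $\mathcal{T}V$-type objects is honest. Once one phrases everything in terms of the isotypical projections $\pi_{\bol{m}} : V\to V^{\bol{m}}$ being $H$-comodule maps and being compatible with tensor products via the convolution structure, parts (b) and (c) are essentially immediate bookkeeping.
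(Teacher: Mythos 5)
Your proof is correct and follows essentially the same route as the paper, which for a) simply records the decomposition $(V\otimes W)^{\bol{m}}=\coprod_{\bol{n}}\big(V^{\bol{n}}\otimes W^{\bol{m}-\bol{n}}\big)$ and declares b) and c) obvious; your write-up just supplies the details the paper omits (your argument for c), extracting $v_{\bol{m}}\in U$ from $\rho^V(u)\in H\otimes U$ via linear independence of the $t_{\bol{m}}$, is the standard and intended one). The one step you flag as delicate, the reverse inclusion in a), is in fact automatic: linear independence of the $t_{\bol{m}}$ in $H$ makes the sum $\sum_{\bol{m}}V^{\bol{m}}\subseteq V$ direct for \emph{any} left $H$-comodule $V$, so the canonical map $\coprod_{\bol{m}}V^{\bol{m}}\to V$ is always injective and decomposability is only ever a question of surjectivity, which your forward inclusion together with distributivity of $\otimes$ over direct sums already settles.
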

\begin{proof}
To prove item a), note that for $V,W$ decomposable we have
\begin{flalign}
V\otimes W \simeq \coprod_{\bol{m}\in \bbZ^{n}} \, \Big(\,
\coprod_{\bol{n}\in\bbZ^n} \, \big(V^{\bol{n}}\otimes
W^{\bol{m}-\bol{n}}\big)\, \Big)~,
\end{flalign}
hence $V\otimes W$ is decomposable with
\begin{flalign}
(V\otimes W)^{\bol{m}} = \coprod_{\bol{n}\in\bbZ^n}\, \big(V^{\bol{n}}\otimes W^{\bol{m}-\bol{n}}\big)~.
\end{flalign}
The monoidal unit object $\bbK_{\bol{0}}$ is clearly decomposable.
Items b) and c) are obvious.
\end{proof}
\begin{lem}\label{lem:Aisdecompo}
Let $A$ be an object in ${}^H\AAA_{\mathrm{fp}}$. Then the left $H$-comodule underlying $A$ is 
decomposable.
\end{lem}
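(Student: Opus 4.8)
The plan is to deduce this from a structural feature of the torus Hopf algebra rather than from the finite presentation of $A$. As a coalgebra, $H=\OO(\bbT^{n})$ is the group(-like) coalgebra on the lattice $\bbZ^{n}$: it has basis $\{t_{\bol{m}}\}_{\bol{m}\in\bbZ^{n}}$, all of which are group-like, with $\Delta(t_{\bol{m}})=t_{\bol{m}}\otimes t_{\bol{m}}$ and $\epsilon(t_{\bol{m}})=1$. Comodules over such a coalgebra are precisely $\bbZ^{n}$-graded vector spaces, so the statement is really a special case of the fact that \emph{every} object of ${}^{H}\MMM$ is decomposable. I would make this explicit as follows.

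Given any object $V$ in ${}^{H}\MMM$, write the coaction as $\rho^{V}(v)=\sum_{\bol{m}\in\bbZ^{n}}t_{\bol{m}}\otimes p_{\bol{m}}(v)$; this defines linear endomorphisms $p_{\bol{m}}$ of $V$, and for each fixed $v$ only finitely many $p_{\bol{m}}(v)$ are nonzero because $\rho^{V}(v)$ is a finite sum in $H\otimes V$. Coassociativity and the counit axiom \eqref{eqn:coactionproperties}, together with the linear independence of $\{t_{\bol{n}}\otimes t_{\bol{m}}\}$ in $H\otimes H$, force $p_{\bol{m}}\circ p_{\bol{n}}=\delta_{\bol{m},\bol{n}}\,p_{\bol{m}}$ and $\sum_{\bol{m}}p_{\bol{m}}=\id_{V}$ as a locally finite sum. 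Hence $p_{\bol{m}}(v)\in V^{\bol{m}}$ for every $v$, the subspaces $V^{\bol{m}}$ for distinct $\bol{m}$ are linearly independent, and $v=\sum_{\bol{m}}p_{\bol{m}}(v)$; this is exactly the assertion that the canonical ${}^{H}\MMM$-morphism $\coprod_{\bol{m}\in\bbZ^{n}}V^{\bol{m}}\to V$ of Definition \ref{def:decomposable} is an isomorphism. Applying this to the comodule underlying $A$ finishes the argument.

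If one instead prefers to stay within the combinatorics of ${}^{H}\AAA_{\mathrm{fp}}$ and use Lemma \ref{lem:decomposableproperties}, an alternative route is available: the generating comodule $\bbK_{\bol{m}_{1}}\sqcup\cdots\sqcup\bbK_{\bol{m}_{N}}$ is visibly decomposable, so by part a) each tensor power is decomposable and by part b) so is $\mathcal{T}V=\coprod_{n\geq 0}V^{\otimes n}$; the relations defining $\mathrm{Free}(V)$ and the generators $f_{k}$ (for which $\rho(f_{k})=t_{\bol{n}_{k}}\otimes f_{k}$ by hypothesis) are $H$-homogeneous, so the ideals involved are ${}^{H}\MMM$-subobjects, decomposable by part c), and one passes to the quotient. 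The one point not literally contained in Lemma \ref{lem:decomposableproperties} is that a quotient $V/W$ of a decomposable $V$ by an ${}^{H}\MMM$-subobject $W$ is again decomposable, but this is immediate since $W^{\bol{m}}=W\cap V^{\bol{m}}$ and therefore $(V/W)^{\bol{m}}=V^{\bol{m}}/W^{\bol{m}}$. There is no genuine obstacle in either approach; the only thing requiring any care is the local finiteness of the weight decomposition, and I would present the first, coalgebra-theoretic argument as the proof.
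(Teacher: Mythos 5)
Your primary argument is correct, and it takes a genuinely different route from the paper. The paper argues from the finite presentation: it reduces $F_{\bol{m}_1,\dots,\bol{m}_N}$ to a tensor product of the algebras $F_{\bol{m}_i}$ via Lemma \ref{lem:decomposableproperties}~a), computes explicitly that $F_{\bol{m}}\simeq\coprod_{k\in\bbZ_{\geq 0}}\bbK_{k\,\bol{m}}$, and then passes to the quotient by the (decomposable) ideal $I$. Your coalgebra-theoretic argument instead exploits that $H=\OO(\bbT^n)$ is spanned by group-like elements, so that the projections $p_{\bol{m}}$ extracted from $\rho^V(v)=\sum_{\bol{m}}t_{\bol{m}}\otimes p_{\bol{m}}(v)$ form a complete orthogonal family of idempotents; this is sound (local finiteness holds because $\rho^V(v)$ is a finite sum in $H\otimes V$, and the sum $\sum_{\bol{m}}V^{\bol{m}}$ is direct by linear independence of the $t_{\bol{m}}$), and it proves the stronger statement that \emph{every} object of ${}^H\MMM$ is decomposable, i.e.\ ${}^H\MMM_{\mathrm{dec}}={}^H\MMM$ and ${}^H\MMM$ is just the category of $\bbZ^n$-graded vector spaces. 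What you lose relative to the paper is only the explicit weight decomposition $F_{\bol{m}}\simeq\coprod_k\bbK_{k\,\bol{m}}$, which is reused later (e.g.\ in \eqref{eqn:jVXFmexplicit}); what you gain is that the restriction to decomposables in Appendix \ref{app:technical} becomes automatic for this particular $H$ (the authors' more guarded formulation would only be needed for Hopf algebras not spanned by group-likes). Your alternative route is essentially the paper's proof, and you correctly identify and fill the one step it uses without stating, namely that a quotient of a decomposable comodule by an ${}^H\MMM$-subobject is decomposable.
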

\begin{proof}
Let us start with the case where $A = F_{\bol{m_1},\dots,\bol{m}_N}$ is a free ${}^H\AAA$-algebra.
As $F_{\bol{m_1},\dots,\bol{m}_N} \simeq F_{\bol{m}_1} \sqcup \cdots\sqcup F_{\bol{m}_M} $,
where $\sqcup$ denotes the coproduct in ${}^H\AAA_{\mathrm{fp}}$ (given explicitly by $\otimes$), 
we can use Lemma \ref{lem:decomposableproperties} a) and reduce the problem to showing that $F_{\bol{m}}$ is decomposable.
Notice that
\begin{flalign}
{F_{\bol{m}}}^{\bol{n}} = \begin{cases}
\mathrm{span}_{\bbK}(x^k)\simeq \bbK_{\bol{n}} & ~,~~\text{for }\bol{n} = k\,\bol{m}~,~k\in \bbZ_{\geq 0}~,\\
0 &~,~~\text{otherwise}~,
\end{cases}
\end{flalign}
where $x$ denotes the generator of $F_{\bol{m}}$ with $H$-coaction $x\mapsto t_{\bol{m}}\otimes x$.
The canonical ${}^H\MMM$-morphism reads as
\begin{flalign}
\coprod_{\bol{n}\in\bbZ^n}\, {F_{\bol{m}}}^{\bol{n}} \simeq \coprod_{k\in\bbZ_{\geq 0}}\, \bbK_{k\,\bol{m}}\longrightarrow F_{\bol{m}}~,~~\coprod_{k}\, c_k \longmapsto \sum_{k}\, c_k\,x^k~,
\end{flalign}
and it is easy to see that it is an isomorphism. 
\sk

For the case where $A = F_{\bol{m}_1,\dots,\bol{m}_N}/I$ is finitely
presented, we use the property that
$F_{\bol{m}_1,\dots,\bol{m}_N}$ is decomposable and hence so is the ${}^H\AAA$-ideal 
$I\subseteq F_{\bol{m}_1,\dots,\bol{m}_N}$. 
Consequently, the quotient $A = F_{\bol{m}_1,\dots,\bol{m}_N}/I$ is decomposable as well.
\end{proof}
\begin{cor}\label{cor:derisdecomposable}
Let $A$ be an object in ${}^H\AAA_{\mathrm{fp}}$. Then $\der(A)$ is decomposable.
\end{cor}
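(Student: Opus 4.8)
The plan is to obtain Corollary~\ref{cor:derisdecomposable} as an immediate consequence of Lemma~\ref{lem:Aisdecompo} together with the closure properties of decomposables collected in Lemma~\ref{lem:decomposableproperties}. Fix a finite presentation $A = F_{\bol{m}_1,\dots,\bol{m}_N}/(f_k)$. By Lemma~\ref{lem:Aisdecompo} the left $H$-comodule underlying $A$ is decomposable, and this is really all the nontrivial input that is needed.

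First I would note that each shifted comodule $A[-\bol{m}_j]$ is decomposable. One way to see this is that $A[\bol{m}]\simeq A\otimes\bbK_{\bol{m}}$ as objects in ${}^H\MMM$, that $\bbK_{\bol{m}}$ is trivially decomposable since it is one-dimensional and concentrated in degree $\bol{m}$, and that tensor products of decomposables are decomposable by Lemma~\ref{lem:decomposableproperties}~a); equivalently, one checks directly that shifting merely relabels the homogeneous components, $A[-\bol{m}_j]^{\bol{n}}=A^{\bol{n}+\bol{m}_j}$. Applying Lemma~\ref{lem:decomposableproperties}~b) then shows that the finite coproduct $\coprod_{j=1}^N A[-\bol{m}_j]$ in ${}^H\MMM$ is decomposable.

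It remains to observe that $\der(A)$, as defined in \eqref{eqn:derf.g.}, is an ${}^H\MMM$-subobject of $\coprod_{j=1}^N A[-\bol{m}_j]$, for then Lemma~\ref{lem:decomposableproperties}~c) applies and finishes the proof. The point here is that for each $k$ the constraint $\sum_j L_j\,\partial_j(f_k)=0$ cuts out the kernel of an ${}^H\MMM$-morphism: since $f_k$ carries the homogeneous coaction $t_{\bol{n}_k}\otimes f_k$ and each $\partial_j$ lowers the degree uniformly by $\bol{m}_j$ (so that $\partial_j(f_k)$ is homogeneous of degree $\bol{n}_k-\bol{m}_j$), the assignment $L\mapsto\sum_j L_j\,\partial_j(f_k)$ is a morphism in ${}^H\MMM$ from $\coprod_j A[-\bol{m}_j]$ to a suitably shifted copy of $A$ --- this is just the equivariance of the evaluation map \eqref{eqn:dereval} restricted to the one-dimensional subcomodule spanned by $f_k$. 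Hence $\der(A)$ is the intersection over $k$ of the kernels of these morphisms, and therefore an ${}^H\MMM$-subobject of a decomposable comodule. I expect no substantial obstacle here; the only thing requiring a moment of care is the degree bookkeeping under the shift conventions, which is precisely what confirms that $\der(A)$ is an honest ${}^H\MMM$-subobject rather than merely a sub-vector-space, so that Lemma~\ref{lem:decomposableproperties}~c) is applicable.
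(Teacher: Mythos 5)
Your argument is correct and follows essentially the same route as the paper: the paper's proof likewise cites Lemma~\ref{lem:Aisdecompo} for the decomposability of $A$ together with parts~b) and~c) of Lemma~\ref{lem:decomposableproperties} applied to the presentation \eqref{eqn:derf.g.} of $\der(A)$ as a subobject of $\coprod_j A[-\bol{m}_j]$. Your additional remarks --- that the shift merely relabels homogeneous components and that the defining constraints are kernels of ${}^H\MMM$-morphisms, so $\der(A)$ is a genuine subobject --- are correct elaborations of steps the paper leaves implicit.
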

\begin{proof}
Recalling the definition of $\der(A)$ in \eqref{eqn:derf.g.}, 
the claim follows from the fact that $A$ is decomposable (cf.\ Lemma \ref{lem:Aisdecompo}),
and Lemma \ref{lem:decomposableproperties} b) and c).
\end{proof}

\subsection{Embedding of ${}^H\MMM$ into ${}^H\GGG$}
We first define a functor
\begin{subequations}\label{eqn:jfunctor}
\begin{flalign}
j : {}^H\MMM \longrightarrow \mathrm{PSh}({}^H\SSS)~.
\end{flalign}
To an object $V$ in ${}^H\MMM$ the functor $j$ assigns the presheaf
$j(V) : {}^H\SSS^\op \to \Set$ that acts on objects $X_B$ as
\begin{flalign}
j(V)(X_B) := (B\otimes V)^{\bol{0}}
\end{flalign}
and on morphisms $f: X_B\to X_C$ as
\begin{flalign}
j(V)(f) := (f^\ast \otimes \id_V) \,:\,  (C\otimes V)^{\bol{0}} \longrightarrow  (B\otimes V)^{\bol{0}}~.
\end{flalign}
\end{subequations}
To a morphism $L : V\to W$ in ${}^H\MMM$ the functor $j$ assigns the presheaf
morphism $j(L) : j(V) \to j(W)$  given by the natural transformation with components
\begin{flalign}\label{eqn:jLXB}
j(L)_{X_B} := (\id_{B}\otimes L)\,:\, (B\otimes V)^{\bol{0}} \longrightarrow (B\otimes W)^{\bol{0}}~.
\end{flalign}
\begin{propo}\label{propo:j(V)sheaf}
For any object $V$ in ${}^H\MMM$ the presheaf $j(V)$ is a sheaf. Hence \eqref{eqn:jfunctor}
induces a functor $j : {}^H\MMM \to {}^H\GGG$.
\end{propo}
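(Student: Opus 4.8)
The plan is to verify the sheaf condition \eqref{eqn:sheafconditionAlg} for $j(V)$ by reducing it, in three formal steps, to the sheaf property of the representable presheaves already established in Proposition~\ref{propo:fullyfaithfulYoneda}. Fix an ${}^H\SSS$-Zariski covering family $\{f_i : X_{B[s_i^{-1}]}\to X_B\}$, which by Definition~\ref{def:zariski} is a \emph{finite} family; writing $B_i := B[s_i^{-1}]$ and $B_{ij} := B[s_i^{-1},s_j^{-1}]$, I must show that $(B\otimes V)^{\bol 0} \to \prod_i (B_i\otimes V)^{\bol 0} \rightrightarrows \prod_{i,j}(B_{ij}\otimes V)^{\bol 0}$ is an equalizer in $\Set$.

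First I would upgrade the conclusion of Proposition~\ref{propo:fullyfaithfulYoneda}: the proof given there identifies the equalizer of $B \to \prod_i B_i \rightrightarrows \prod_{i,j} B_{ij}$ with $B$ via the mutually inverse maps $b\mapsto \prod_i\ell_{s_i}(b)$ and $\prod_i[b_i\otimes c_i]\mapsto b$, which are visibly $\bbK$-linear and left $H$-equivariant. Since ${}^H\MMM$ has finite limits computed on underlying vector spaces, this diagram is therefore an equalizer not merely in $\Set$ (or in ${}^H\AAA_{\mathrm{fp}}$) but in the category ${}^H\MMM$ of left $H$-comodules; finiteness of the covering family is used here so that the categorical products in the diagram coincide with the corresponding finite direct sums of the underlying comodules.

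Second I would apply the functor $-\otimes V : {}^H\MMM \to {}^H\MMM$. Since $\bbK$ is a field, $V$ is flat, so $-\otimes V$ is exact; in particular it preserves finite products and equalizers, whence $B\otimes V \to \prod_i (B_i\otimes V) \rightrightarrows \prod_{i,j}(B_{ij}\otimes V)$ is an equalizer in ${}^H\MMM$. Third I would apply the coinvariants functor $(-)^{\bol 0}\simeq \Hom_{{}^H\MMM}(\bbK_{\bol 0},-) : {}^H\MMM \to \Set$; being representable it preserves all limits, in particular the finite products and the equalizer just obtained. This reproduces precisely the equalizer displayed in the first paragraph, so $j(V)$ satisfies the sheaf condition and \eqref{eqn:jfunctor} corestricts to a functor $j : {}^H\MMM \to {}^H\GGG$.

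I expect the only genuinely delicate point — hence the step to be written out with care — to be the first one: checking that the explicit description of the equalizer extracted from the proof of Proposition~\ref{propo:fullyfaithfulYoneda} really respects the linear and $H$-comodule structures, and that finiteness of the cover is what makes the products behave as claimed (for an infinite family neither the products of comodules nor the functor $-\otimes V$ would be sufficiently well behaved). Steps two and three are then purely formal consequences of flatness over a field and of limit-preservation by the representable functor $(-)^{\bol 0}$, respectively.
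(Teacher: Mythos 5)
Your proof is correct, but it follows a genuinely different route from the paper's. The paper's proof is a one-sentence reduction to the \emph{same element-level computation} as in the second paragraph of the proof of Proposition~\ref{propo:fullyfaithfulYoneda}: the manipulation of the localization relations there goes through verbatim with $[b_i\otimes c_i]$ replaced by elements of $B[s_i^{-1}]\otimes V$, the factor $V$ being carried along passively. You instead treat the equalizer established there as a black box, upgrade it (using finiteness of Zariski covering families, so that finite products in ${}^H\MMM$ agree with the underlying Cartesian products) to an equalizer in ${}^H\MMM$, and push it through the two limit-preserving functors $-\otimes V$ (exact since $\bbK$ is a field) and $(-)^{\bol{0}}\simeq\Hom_{{}^H\MMM}(\bbK_{\bol{0}},-)$. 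Both arguments are sound. Yours isolates exactly which structural facts are used and avoids re-running the computation with the extra tensor factor; the paper's is shorter and needs no flatness or limit-preservation bookkeeping. The delicate point you flag is handled correctly: the identification of the set-theoretic equalizer with $B$ is realized by the ${}^H\AAA_{\mathrm{fp}}$-morphism $b\mapsto\prod_i\ell_{s_i}(b)$, which is in particular an ${}^H\MMM$-morphism, so the upgrade to an equalizer in ${}^H\MMM$ is legitimate.
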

\begin{proof}
Given any ${}^H\SSS$-Zariski covering family $\{f_i : X_{B[s_i^{-1}]} \to X_B^{}\}$,
we have to verify the sheaf condition \eqref{eqn:sheafcondition},
i.e.\ that the diagram
\begin{flalign}
\xymatrix{
(B\otimes V)^{\bol{0}} ~\ar[r] & ~\prod\limits_{i}\,  (B[s_i^{-1}]\otimes V)^{\bol{0}}  ~\ar@<-0.5ex>[r]\ar@<0.5ex>[r]& ~\prod\limits_{i,j}\, (B[s_i^{-1},s_j^{-1}]\otimes V)^{\bol{0}}~
}
\end{flalign}
is an equalizer in $\Set$. This follows from the same argument that we
have used in the second paragraph of the proof of 
Proposition~\ref{propo:fullyfaithfulYoneda}.
\end{proof}

For any object $X_B$ in ${}^H\SSS$, the set $j(V)(X_B) = (B\otimes V)^{\bol{0}}$
is a $B^{\bol{0}}$-module with Abelian group structure induced by the vector space structure of
$B\otimes V$ and $B^{\bol{0}}$-action
given by
\begin{flalign}\label{eqn:B0ModjV}
B^{\bol{0}} \times (B\otimes V)^{\bol{0}}\longrightarrow (B\otimes V)^{\bol{0}}~,~~(b,b^\prime \otimes v)\longmapsto
b\cdot (b^\prime\otimes v) := (b\,b^\prime\, )\otimes v~.
\end{flalign}
These structures are natural with respect to ${}^H\SSS$-morphisms $f : X_{B}\to X_{C}$, i.e.\
\begin{flalign}
(f^\ast\otimes\id_V) \big(c\cdot (c^\prime\otimes v)\big) = f^\ast(c)\cdot\big((f^\ast\otimes\id_V) (c^\prime\otimes v)\big)~,
\end{flalign}
for all $c\in C^{\bol{0}}$, $c^\prime\in C$ and $v\in V$,
hence they endow $j(V)$ with the structure of a $\underline{K}$-module object in ${}^H\GGG$. For any 
${}^H\MMM$-morphism $L:V\to W$ the ${}^H\GGG$-morphism $j(L) : j(V) \to j(W)$ 
is compatible with this $\underline{K}$-module object structure, i.e.\
\eqref{eqn:jLXB} is a $B^{\bol{0}}$-module morphism, for all objects $X_B$ in ${}^H\SSS$.
We have thereby obtained
\begin{propo}
With respect to the $\underline{K}$-module object structures on $j(V)$ introduced above,
$j: {}^H\MMM\to \Mod_{\underline{K}}({}^H\GGG)$  is a functor with values in
the category $\Mod_{\underline{K}}({}^H\GGG)$ of $\underline{K}$-module objects in ${}^H\GGG$.
\end{propo}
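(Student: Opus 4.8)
The statement is an assembly of the verifications made immediately above, so the plan is to organise them into the three things that must be checked: that $j$ is well defined as a functor into ${}^H\GGG$, that each $j(V)$ is a $\underline{K}$-module object in ${}^H\GGG$, and that each $j(L)$ is a morphism of such module objects.

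First I would recall that $j(V)$ is a sheaf by Proposition~\ref{propo:j(V)sheaf}, hence an object of ${}^H\GGG$, and that functoriality of $j\colon{}^H\MMM\to{}^H\GGG$ is immediate from the defining formulas: $j(\id_V)_{X_B} = \id_B\otimes\id_V = \id_{(B\otimes V)^{\bol{0}}}$ and $j(L^\prime\circ L)_{X_B} = \id_B\otimes(L^\prime\circ L) = (\id_B\otimes L^\prime)\circ(\id_B\otimes L) = j(L^\prime)_{X_B}\circ j(L)_{X_B}$, with naturality in $X_B$ already recorded when $j$ was introduced. Since morphisms in ${}^H\GGG$ are just natural transformations of the underlying presheaves, nothing further is needed for this part.

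Next, for the $\underline{K}$-module structure I would use that limits — in particular finite products — in ${}^H\GGG$ are computed object-wise, and that $\underline{K}$ is represented at stage $X_B$ by the commutative ring $B^{\bol{0}}$ (Remark~\ref{rem:ringfunctor}) while $j(V)$ is represented there by $(B\otimes V)^{\bol{0}}$. Consequently every commutative diagram in ${}^H\GGG$ expressing a module-object axiom holds if and only if the corresponding diagram of sets commutes at each stage $X_B$, and the stage-wise diagrams are precisely the axioms making $(B\otimes V)^{\bol{0}}$ an ordinary $B^{\bol{0}}$-module. These follow from the vector-space structure on $B\otimes V$ together with the action \eqref{eqn:B0ModjV}, where one uses that multiplication by a coinvariant $b\in B^{\bol{0}}$ preserves coinvariants so that $b\cdot(b^\prime\otimes v)=(b\,b^\prime)\otimes v$ indeed lies in $(B\otimes V)^{\bol{0}}$. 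Naturality of the abelian-group operations and of the $B^{\bol{0}}$-action with respect to ${}^H\SSS$-morphisms $f\colon X_B\to X_C$ was recorded above; since source and target are sheaves, these natural transformations are automatically ${}^H\GGG$-morphisms, so $j(V)\in\Mod_{\underline{K}}({}^H\GGG)$. Finally, for any ${}^H\MMM$-morphism $L\colon V\to W$, $\underline{K}$-linearity of $j(L)$ again reduces to the stage-wise statement that $\id_B\otimes L\colon(B\otimes V)^{\bol{0}}\to(B\otimes W)^{\bol{0}}$ is a $B^{\bol{0}}$-module morphism, which is immediate from $(\id_B\otimes L)\big((b\,b^\prime)\otimes v\big)=(b\,b^\prime)\otimes L(v)=b\cdot\big((\id_B\otimes L)(b^\prime\otimes v)\big)$ together with additivity.

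There is no genuinely hard step here; the content is a routine verification. The only point that demands a little care is the reduction of the module-object axioms in ${}^H\GGG$ to stage-wise identities, which relies on products in ${}^H\GGG$ being object-wise and on the functor-of-points descriptions of $\underline{K}$ and of $j(V)$, together with the observation that each structure map is a genuine presheaf morphism (naturality) before one invokes that sheaf morphisms are just such natural transformations.
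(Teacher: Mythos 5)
Your proposal is correct and follows essentially the same route as the paper: the paper likewise treats this proposition as a summary of the immediately preceding verifications, namely the stage-wise $B^{\bol{0}}$-module structure on $(B\otimes V)^{\bol{0}}$, its naturality under ${}^H\SSS$-morphisms, and the stage-wise $B^{\bol{0}}$-linearity of $j(L)_{X_B}=\id_B\otimes L$. Your additional remarks on functoriality of $j$ and on reducing the module-object axioms in ${}^H\GGG$ to object-wise identities only make explicit what the paper leaves implicit.
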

\begin{rem}\label{rem:notmonoidal}
The functor $j$ is not a monoidal functor, i.e.\ the object 
$j(V\otimes W)$ is in general not isomorphic to $j(V)\otimes j(W)$, where 
the tensor product in $\Mod_{\underline{K}}({}^H\GGG)$ is given by
\begin{flalign}
(j(V)\otimes j(W))(X_B) := j(V)(X_B)\otimes_{B^\bol{0}} j(W)(X_B)~,
\end{flalign}
for all objects $X_B$ in ${}^H\SSS$. For example, take $B=\bbK$
then $j(V\otimes W)(X_{\bbK}) = (V\otimes W)^{\bol{0}}$ but
$(j(V)\otimes j(W))(X_{\bbK}) = V^\bol{0}\otimes W^{\bol{0}}$.
However, there exists a $\Mod_{\underline{K}}({}^H\GGG)$-morphism
\begin{subequations}\label{eqn:psimap}
\begin{flalign}
\psi : j(V)\otimes j(W) \longrightarrow j(V\otimes W)~,
\end{flalign}
for all objects $V,W$ in ${}^H\MMM$. The components of $\psi$ are given by
\begin{flalign}
\nn \psi_{X_B} : (B\otimes V)^{\bol{0}}\otimes_{B^{\bol{0}}} (B\otimes W)^{\bol{0}}&\longrightarrow (B\otimes V\otimes W)^{\bol{0}}~,\\
(b\otimes v)\otimes_{B^{\bol{0}}} (b^\prime\otimes w)&\longmapsto R(b^\prime_{(-1)}\otimes v_{(-1)}) ~ (b\,b^\prime_{(0)})\otimes v_{(0)}\otimes w~,
\end{flalign}
\end{subequations}
for all objects $X_B$ in ${}^H\SSS$.
\end{rem}

Let now $V$ be decomposable, i.e.\ an object in ${}^H\MMM_{\mathrm{dec}}$. 
Because any object $B$ in ${}^H\AAA_{\mathrm{fp}}$
is decomposable as well (cf.\ Lemma \ref{lem:Aisdecompo}), we obtain
\begin{flalign}\label{eqn:jVXBexplicit}
j(V)(X_B) \simeq \coprod_{\bol{n}\in\bbZ^n} \, \big(B^{\bol{n}}\otimes V^{-\bol{n}}\big)~.
\end{flalign}
For the special case where $B = F_{\bol{m}}$ is the free ${}^H\AAA$-algebra with one generator
with coaction $x\mapsto t_{\bol{m}}\otimes x$, we use 
$B\simeq \coprod_{k\in \bbZ_{\geq 0}} \, \bbK_{k\,\bol{m}}$ to simplify this expression further to
\begin{flalign}\label{eqn:jVXFmexplicit}
j(V)(X_{F_{\bol{m}}}) \simeq  \coprod_{k\in\bbZ_{\geq 0}} \, V^{-k\,\bol{m}}~,
\end{flalign}
where the coproducts here are in the category of vector spaces.
Using this explicit characterization, we can establish the main result
of this appendix.
\begin{theo}
For any two objects $V,W$ in ${}^H\MMM_{\mathrm{dec}}$ there is a bijection of $\Hom$-sets
\begin{flalign}
\Hom_{{}^H\MMM}(V,W) \simeq \Hom_{\Mod_{\underline{K}}({}^H\GGG)} (j(V),j(W))~. 
\end{flalign}
Thus the restricted functor $j : {}^H\MMM_{\mathrm{dec}} \to \Mod_{\underline{K}}({}^H\GGG)$
to the full subcategory of decomposables ${}^H\MMM_{\mathrm{dec}}$ is fully faithful.
\end{theo}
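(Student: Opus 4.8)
The plan is to construct an explicit inverse to the map $j$ on $\Hom$-sets, using the concrete descriptions \eqref{eqn:jVXBexplicit} and \eqref{eqn:jVXFmexplicit} of the presheaves $j(V)$ on free test algebras. Faithfulness is the easy direction: given an ${}^H\MMM$-morphism $L: V\to W$, its image $j(L)$ has components $(\id_B\otimes L)$, and already evaluating at $X_{\bbK}$ (where $j(V)(X_\bbK) = V^{\bol{0}}$) together with the decomposability of $V$ and naturality will let us recover $L$; so I first observe $\Hom_{{}^H\MMM}(V,W)\to\Hom_{\Mod_{\underline{K}}({}^H\GGG)}(j(V),j(W))$ is injective. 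For fullness, let $\phi: j(V)\to j(W)$ be any $\Mod_{\underline{K}}({}^H\GGG)$-morphism. The key idea is to probe $\phi$ on the free test algebras $F_{\bol{m}}$, for $\bol{m}\in\bbZ^n$, where by \eqref{eqn:jVXFmexplicit} we have $j(V)(X_{F_{\bol{m}}})\simeq \coprod_{k\geq 0} V^{-k\bol{m}}$ and similarly for $W$. Naturality of $\phi$ with respect to the ${}^H\SSS$-morphisms corresponding to the ${}^H\AAA_{\mathrm{fp}}$-morphisms $F_{\bol{m}}\to F_{\bol{m}}$, $x\mapsto c\,x$ (which act on the $k$-th summand by $c^k$) forces $\phi_{X_{F_{\bol{m}}}}$ to respect the grading by $k$, and hence to be determined by a collection of linear maps $V^{-k\bol{m}}\to W^{-k\bol{m}}$.

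Next I would pin these maps down. Using $F_{\bol{0}}$-module structure — i.e.\ the $\underline{K}$-linearity of $\phi$ together with the action of the line object — one sees that all the summands with $k\geq 1$ are rigidly tied to the $k=1$ summand: the ${}^H\AAA_{\mathrm{fp}}$-morphisms $F_{\bol{m}}\to F_{\bol{m}}\sqcup F_{\bol{0}}$ and the $\underline{K}$-module axioms relate $\phi$ on $V^{-k\bol{m}}$ to $\phi$ on $V^{-\bol{m}}$. Thus $\phi$ is determined by a family of linear maps $L_{\bol{m}}: V^{-\bol{m}}\to W^{-\bol{m}}$, one for each $\bol{m}\in\bbZ^n$. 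Since $V = \coprod_{\bol{m}}V^{\bol{m}}$ and $W=\coprod_{\bol{m}}W^{\bol{m}}$ by decomposability, assembling $\{L_{-\bol{m}}\}$ gives a linear map $L: V\to W$ which manifestly preserves the $H$-coaction (it preserves each homogeneous component), i.e.\ an ${}^H\MMM$-morphism. Finally I would check $j(L)=\phi$: both are $\underline{K}$-module morphisms of sheaves, and sheaves are determined by their values on a generating family of objects; since $j(L)$ and $\phi$ agree on all $X_{F_{\bol{m}_1,\dots,\bol{m}_N}}$ (by multiplicativity in the tensor-coordinates and the matching on each $X_{F_{\bol{m}}}$, using the explicit formula \eqref{eqn:jVXBexplicit} for general decomposable $B$), and every object of ${}^H\AAA_{\mathrm{fp}}$ is a quotient of such a free algebra, naturality forces $j(L)=\phi$ everywhere.

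The main obstacle I anticipate is the bookkeeping that shows $\phi$ on the higher summands ($k\geq 2$, and more generally on products $F_{\bol{m}_1,\dots,\bol{m}_N}$ with several generators) is already forced by its restriction to the "linear part" $\coprod_{\bol{m}}V^{-\bol{m}}$ sitting inside the $k=1$ pieces — this is where decomposability of $V$ (not just of the test algebras) is essential, and where one must use that $\phi$ is $\underline{K}$-linear rather than merely natural. The flip-twist $R$ appearing in \eqref{eqn:tensoralgebra} and in the comparison map $\psi$ of Remark \ref{rem:notmonoidal} will enter when combining several generators, and keeping the cotriangular factors consistent across these identifications is the delicate point; but it is a finite, mechanical check once the grading arguments above are in place. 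Everything else — injectivity, $H$-equivariance of the assembled $L$, and the sheaf-theoretic "agreement on a generating family implies equality" step — is routine given Lemma \ref{lem:Aisdecompo} and Proposition \ref{propo:j(V)sheaf}.
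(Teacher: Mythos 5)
Your overall architecture matches the paper's: probe $\phi$ on the one--generator free algebras $F_{\bol{m}}$, use naturality with respect to $x\mapsto c\,x$ to see that $\phi_{X_{F_{\bol{m}}}}$ preserves the grading $\coprod_{k\geq 0}V^{-k\bol{m}}$, assemble an ${}^H\MMM$-morphism $L$ from the resulting components, and then propagate to all stages. But the mechanism you propose for the step you yourself flag as the main obstacle --- pinning down the components on the higher summands $k\geq 2$ --- does not work. At stage $X_{F_{\bol{m}}}$ with $\bol{m}\neq\bol{0}$ one has ${F_{\bol{m}}}^{\bol{0}}=\bbK$, so $\underline{K}$-linearity of $\phi_{X_{F_{\bol{m}}}}$ is nothing more than $\bbK$-linearity and carries no information tying the $k$-th summand to the first. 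Passing to $F_{\bol{m}}\sqcup F_{\bol{0}}$ does not help either: the $\bbK[y]$-action only multiplies by powers of the coinvariant generator $y$ and never relates $x^k\otimes v$ to $x\otimes(\cdot)$, so you only re-derive grading preservation. Moreover the statement that the higher summands are ``tied to the $k=1$ summand'' of the \emph{same} stage is not even the right assertion: $V^{-k\bol{m}}$ and $V^{-\bol{m}}$ are different graded pieces, so the component $L_{\bol{m},k}:V^{-k\bol{m}}\to W^{-k\bol{m}}$ cannot be determined by $L_{\bol{m},1}$; what must be shown is the cross-stage consistency $L_{\bol{m},k}=L_{k\bol{m},1}$, i.e.\ that the $k$-th component at stage $F_{\bol{m}}$ agrees with the \emph{first} component at stage $F_{k\bol{m}}$. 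The paper obtains this from naturality along the ${}^H\AAA_{\mathrm{fp}}$-morphism $F_{k\bol{m}}\to F_{\bol{m}}$, $x\mapsto x^k$, which sends $x\otimes v$ to $x^k\otimes v$; this is the ingredient missing from your argument, and without it the constructed $L$ need not satisfy $j(L)=\phi$ on the $k\geq 2$ summands.

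Two smaller points. First, the same device --- naturality along $F_{\bol{n}}\to B$, $x\mapsto b$, for each homogeneous $b\in B^{\bol{n}}$ --- is what determines $\phi_{X_B}$ at an arbitrary stage from the components $\phi_{X_{F_{\bol{n}}}}$; your appeal to ``multiplicativity in the tensor-coordinates'' has no content here, since $\phi$ is only a module morphism, not an algebra morphism, and the $R$-matrix bookkeeping you anticipate never actually enters. Second, for faithfulness, evaluating at $X_{\bbK}$ only recovers $L|_{V^{\bol{0}}}$ (since $j(V)(X_{\bbK})=V^{\bol{0}}$); you must again evaluate at the stages $X_{F_{-\bol{m}}}$ on elements $x\otimes v$, $v\in V^{\bol{m}}$, to recover $L$ on every graded component. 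Both repairs are routine, but they are exactly where the proof lives.
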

\begin{proof}
Let $\eta : j(V)\to j(W)$ be any morphism in $\Mod_{\underline{K}}({}^H\GGG)$.
The components
\begin{flalign}
\eta_{X_B} : (B\otimes V)^{\bol{0}}\longrightarrow (B\otimes W)^{\bol{0}}~
\end{flalign}
are $B^{\bol{0}}$-module morphisms, for all objects $X_B$ in ${}^H\SSS$,
such that for any ${}^H\SSS$-morphism $f : X_B\to X_C$ the diagram
\begin{flalign}\label{eqn:naturaldiagramtmp}
\xymatrix{
\ar[d]_-{f^\ast\otimes\id_V}(C\otimes V)^{\bol{0}} \ar[rr]^-{\eta_{X_C}} && (C\otimes W)^{\bol{0}}\ar[d]^-{f^\ast\otimes\id_W}\\
(B\otimes V)^{\bol{0}} \ar[rr]_-{\eta_{X_B}} && (B\otimes W)^{\bol{0}}
}
\end{flalign}
commutes.
\sk

We first show that $\eta$ is uniquely determined by the components $\eta_{X_{F_{\bol{m}}}}$,
for all free ${}^H\AAA$-algebras $F_{\bol{m}}$ with one generator.
Using \eqref{eqn:jVXBexplicit}, we find that $\eta_{X_B}$ is specified by its action
on elements of the form $b\otimes v \in B^{\bol{n}}\otimes V^{-\bol{n}}$, for all $\bol{n}$.
Given any such element, we define an ${}^H\AAA_{\mathrm{fp}}$-morphism
$f^\ast : F_{\bol{n}} \to B$ by sending $x\mapsto b$. 
(Notice that the morphism $f^\ast$ depends on the chosen element $b\otimes v$.)
Then the commutative diagram \eqref{eqn:naturaldiagramtmp}
implies that $\eta_{X_B}(b\otimes v) = (f^\ast\otimes \id_W)(\eta_{X_{F_{\bol{n}}}}(x\otimes v))$,
hence the value of $\eta_{X_B}$ at $b\otimes v$ is fixed by $\eta_{X_{F_{\bol{n}}}}$.
As $b\otimes v$ was arbitrary, we find that $\eta$ is uniquely determined
by the components $\{\eta_{X_{F_{\bol{m}}}} : \bol{m}\in\bbZ^n\}$.
\sk

In the next step we show that the components $\{\eta_{X_{F_{\bol{m}}}} : \bol{m}\in\bbZ^n\}$
are uniquely determined by an ${}^H\MMM$-morphism $L : V\to W$.
Consider the ${}^H\AAA_{\mathrm{fp}}$-morphism $f^\ast : F_{\bol{m}}\to F_{\bol{m}}$
defined by $x\mapsto c\,x$, where $c\in\bbK$ is an arbitrary constant.
Using \eqref{eqn:jVXFmexplicit} and the commutative diagram \eqref{eqn:naturaldiagramtmp} corresponding
to this morphism, we obtain a commutative diagram
\begin{flalign}
\xymatrix{
\ar[d] \coprod\limits_{k\in\bbZ_{\geq 0}} \, V^{-k\,\bol{m}} \ar[rr]^-{\eta_{X_{F_{\bol{m}}}}} && \coprod\limits_{k\in\bbZ_{\geq 0}}\, W^{-k\,\bol{m}}\ar[d]\\
\coprod\limits_{k\in\bbZ_{\geq 0}} \, V^{-k\,\bol{m}}
\ar[rr]_-{\eta_{X_{F_{\bol{m}}}}} && \coprod\limits_{k\in\bbZ_{\geq
    0}} \, W^{-k\,\bol{m}}
}
\end{flalign}
The vertical arrows map elements $v\in  V^{-k\,\bol{m}}$ 
to $c^k\,v \in \coprod_{k\in\bbZ_{\geq 0}}\, V^{-k\,\bol{m}}$ (and similarly for $w\in W^{-k\,\bol{m}}$), 
where the power in $c^k$ depends on the term in the coproduct.
Hence by ${F_{\bol{m}}}^{\bol{0}}$-linearity of $\eta_{X_{F_{\bol{m}}}}$ (which in particular 
implies $\bbK$-linearity), 
we find that $\eta_{X_{F_{\bol{m}}}}$ decomposes into $\bbK$-linear maps
\begin{flalign}
L_{\bol{m}, k} :  V^{-k\,\bol{m}}  \longrightarrow W^{-k\,\bol{m}} ~.
\end{flalign}
It remains to show that $L_{\bol{m}, k} = L_{k\,\bol{m},1}$, for all $\bol{m}\in \bbZ^n$ and all $k\in\bbZ_{\geq 0}$.
Consider the ${}^H\AAA_{\mathrm{fp}}$-morphism $f^\ast : F_{k\,\bol{m}}\to F_{\bol{m}}$
defined by $x\mapsto x^k$. The corresponding commutative diagram \eqref{eqn:naturaldiagramtmp}
then relates $\eta_{X_{F_{k\,\bol{m}}}} $ to $\eta_{X_{F_{\bol{m}}}} $ and we obtain  the desired result
$L_{\bol{m}, k} = L_{k\,\bol{m},1}$. This defines a unique ${}^H\MMM$-morphism
\begin{flalign}
L:= \coprod_{\bol{m}\in\bbZ^n}\, L_{\bol{m},1} :
\coprod_{\bol{m}\in\bbZ^n}\, V^{-\bol{m}} \longrightarrow \coprod_{\bol{m}\in\bbZ^n}\, W^{-\bol{m}}~,
\end{flalign}
and hence by the assumption that $V$ and $W$ are decomposable 
also a unique ${}^H\MMM$-morphism $L : V\to W$. 
\end{proof}

\end{document}